\DeclareSymbolFont{rsfs}{U}{rsfs}{m}{n}
           \newcommand{\mylabel}[2]{#2\def\@currentlabel{#2}\label{#1}}
           \renewcommand\@biblabel[1]{#1.}
      \numberwithin{equation}{section}
\definecolor{antiquewhite}{rgb}{0.98, 0.92, 0.84}
\definecolor{buff}{rgb}{0.94, 0.86, 0.51}
\definecolor{palecopper}{rgb}{0.85, 0.54, 0.4}
\definecolor{fluorescentyellow}{rgb}{0.8, 1.0, 0.0}
\definecolor{britishracinggreen}{rgb}{0.0, 0.26, 0.15}
\definecolor{cobalt}{rgb}{0.0, 0.28, 0.67}
\DeclareSymbolFont{usualmathcal}{OMS}{cmsy}{m}{n}
\DeclareSymbolFontAlphabet{\mathcal}{usualmathcal}
\newcommand{\TT}{\mathbf{T}}
\newcommand{\BC}{{\mathbb{C}}}
\newcommand{\BE}{{\mathbb{E}}}
\newcommand{\BL}{{\mathbb{L}}}
\newcommand{\BP}{{\mathbb{P}}}
\newcommand{\BQ}{{\mathbb{Q}}}
\newcommand{\BR}{{\mathbb{R}}}
\newcommand{\BT}{{\mathbb{T}}}
\newcommand{\BZ}{{\mathbb{Z}}}
\newcommand{\CA}{{\mathcal A}}
\newcommand{\CE}{{\mathcal E}}
\newcommand{\CF}{{\mathcal F}}
\newcommand{\CI}{{\mathcal I}}
\newcommand{\CL}{{\mathcal L}}
\newcommand{\CM}{{\mathcal M}}
\newcommand{\CZ}{{\mathcal Z}}
\newcommand{\simto}{\,\widetilde{\to}\,}
\newcommand{\pt}{{\mathsf{pt}}}
\newcommand{\fix}{\mathsf{fix}}
\newcommand{\mov}{\mathsf{mov}}
\DeclareMathOperator{\Hilb}{Hilb}
\DeclareMathOperator{\Quot}{Quot}
\DeclareMathOperator{\coker}{coker}
\DeclareMathOperator{\Sym}{Sym}
\DeclareMathOperator{\coh}{coh}
\DeclareMathOperator{\vir}{\mathrm{vir}}
\DeclareMathOperator{\Exp}{Exp}
\DeclareMathOperator{\Var}{Var}
\DeclareMathOperator{\GL}{GL}
\DeclareMathOperator{\U}{U}
\DeclareMathOperator{\rk}{rk}
\DeclareMathOperator{\tr}{tr}
\newcommand{\derived}{\mathbf{D}}
\newcommand*{\defeq}{\mathrel{\vcenter{\baselineskip0.5ex \lineskiplimit0pt
                     \hbox{\scriptsize.}\hbox{\scriptsize.}}}%
                     =}
\newcommand{\nc}{\ensuremath{\rm nc}}
\newcommand{\Rep}{\ensuremath{\mathsf{Rep}}}
\newcommand\LQ{\ensuremath{\mathsf{L_4}}}
\newcommand\tLQ{\ensuremath{\widetilde{\mathsf{L}}_4}}
\tikzset{>=Latex}
\tikzset{GaugeNode/.style={circle,draw,inner sep=0pt,minimum size=10mm}}
\tikzset{FrameNode/.style={rectangle,draw,inner sep=0pt,minimum size=9mm}}
\tikzset{token/.style={circle,double,draw=black!70,fill=black!50,inner sep=0pt,minimum size=3mm}}
\newcommand{\into}{\hookrightarrow}
\newcommand{\onto}{\twoheadrightarrow}
\DeclareFontFamily{OT1}{rsfs}{}
\DeclareFontShape{OT1}{rsfs}{n}{it}{<-> rsfs10}{}
\DeclareMathAlphabet{\curly}{OT1}{rsfs}{n}{it}
\newcommand\Hom{\operatorname{Hom}}
\newcommand\End{\operatorname{End}}
\newcommand\id{\operatorname{id}}
\newcommand{\OO}{\mathscr O}
\tikzset{commutative diagrams/arrow style=math font}
\tikzset{commutative diagrams/.cd,
mysymbol/.style={start anchor=center,end anchor=center,draw=none}}
\tikzset{
shift up/.style={
to path={([yshift=#1]\tikztostart.east) -- ([yshift=#1]\tikztotarget.west) \tikztonodes}
}
}
\theoremstyle{definition}
\newtheorem*{lemma*}{Lemma}
\newtheorem*{theorem*}{Theorem}
\newtheorem*{example*}{Example}
\newtheorem*{fact*}{Fact}
\newtheorem*{notation*}{Notation}
\newtheorem*{definition*}{Definition}
\newtheorem*{prop*}{Proposition}
\newtheorem*{remark*}{Remark}
\newtheorem*{corollary*}{Corollary}
\newtheorem*{conventions*}{Conventions}
\newtheorem{definition}{Definition}[section]
\newtheorem{example}[definition]{Example}
\newtheorem{remark}[definition]{Remark}
\newtheoremstyle{thm} 
        {3mm}
        {3mm}
        {\slshape}
        {0mm}
        {\bfseries}
        {.}
        {1mm}
        {}
\theoremstyle{thm}
\newtheorem{theorem}[definition]{Theorem}
\newtheorem{corollary}[definition]{Corollary}
\newtheorem{lemma}[definition]{Lemma}
\newtheorem{prop}[definition]{Proposition}
\newtheoremstyle{ex} 
        {3mm}
        {3mm}
        {}
        {0mm}
        {\scshape}
        {.}
        {1mm}
        {}
\theoremstyle{ex}
\newtheoremstyle{sol} 
        {3mm}
        {3mm}
        {}
        {0mm}
        {\scshape}
        {.}
        {1mm}
        {}
\theoremstyle{sol}
\DeclareMathOperator{\Spin}{Spin}
\newcommand{\sgn}{{\mathrm{sgn}}}
   \DeclareMathOperator{\oO}{\mathcal{O}}
   \DeclareMathOperator{\tf}{\mathfrak{t}}
\title[Tetrahedron instantons in Donaldson-Thomas theory]{Tetrahedron instantons in Donaldson-Thomas theory}
\author{Nadir Fasola, Sergej Monavari}
\address{School of Mathematics and Statistics, University of Sheffield, Hounsfield Road, Sheffield, S3 7RH, United Kingdom}
\email{n.fasola@sheffield.ac.uk}
\address{Ecole Polytechnique Fédérale de Lausanne (EPFL),  CH-1015 Lausanne, Switzerland}
\email{sergej.monavari@epfl.ch}
\begin{document}
\maketitle
\begin{abstract}
Inspired by the work of Pomoni-Yan-Zhang in String Theory, we introduce the moduli space of tetrahedron instantons as a Quot scheme on a singular threefold and describe it as a moduli space of quiver representations. We construct a virtual fundamental class and virtual structure sheaf à la Oh-Thomas, by which we define $K$-theoretic invariants. We show that the partition function of such invariants reproduces the one studied by Pomoni-Yan-Zhang, and explicitly determine it, as a product of shifted partition functions of rank one  Donaldson-Thomas invariants of the  three-dimensional affine space. Our geometric construction answers a series of questions of Pomoni-Yan-Zhang on the geometry of the moduli space of tetrahedron instantons and the behaviour of its partition function, and provides a new application of the recent work of Oh-Thomas.
\end{abstract}
\section{Introduction}
 \subsection{Tetrahedron instantons}\label{sec:intro tetrahedra}
 Pomoni-Yan-Zhang \cite{PYZ_tetrahedron} recently introduced the \emph{tetrahedron instantons}, which can be realized in type IIB  string theory as D1-branes probing  a configuration of intersecting D7-branes in flat spacetime with constant $B$-field turned on.
 This is equivalently interpreted as describing instanton dynamics on $\BC^3$ in the presence of real-codimension-two supersymmetric defects.
 
 Their construction of the moduli space of tetrahedron instantons is realised via gauge-theoretic methods, so that  supersymmetric localisation techniques apply to define and  study  the associated instanton partition function non-perturbatively. In this article, we propose a rigorous mathematical perspective on the problem. We realise the moduli space of tetrahedron instantons as a Quot scheme and compute explicitly the generating series of the resulting virtual  invariants, thus shedding light on the geometry of the constructions in \cite{PYZ_tetrahedron} from the point of view of Donaldson-Thomas theory.
 \subsubsection{Mathematical formulation}
The idea of mathematically studying instanton partition functions --- defined in \emph{topological string theory} --- via moduli spaces of sheaves (in our case, Quot schemes) is not new in the literature, and follows the circle of ideas for which \emph{systems of D-branes} should be identified with (complexes of) sheaves with prescribed Chern character and suitable stability condition. In this setting, the instanton partition function is usually realised  as a generating series of \emph{virtual} invariants on the corresponding moduli space and exactly computed via (virtual) localisation techniques. See \cite{Okounk_Lectures_K_theory, Arb_K-theo_surface, FMR_higher_rank,  CKM_K_theoretic, Mon_canonical_vertex, CKM_crepant, KR_magnificient, Mon_PhD} for some recent examples 
where this circle of ideas has been  applied, providing an interpretation in Donaldson-Thomas theory of  some results initially introduced in topological string theory (for instance \cite{IKV_topological_vertex,  Nek_Z_theory, AK_quiver_matrix_model, BBPT_elliptic_DT, Nek_magnificient_4, NP_colors}). A novel feature of our proposal is that the relevant moduli space we study  --- \emph{geometrising} the work of \cite{PYZ_tetrahedron} --- is the Quot scheme of a \emph{singular} variety, parametrising quotients of a \emph{torsion} sheaf, in contrast to the classical case of torsion-free sheaves on smooth varieties.
\subsubsection{The moduli space}
  For $i=1,\dots, 4$  denote the $i$-th coordinate hyperplane in $\BC^4$ by $\BC^3_{i}:=Z(x_i)$ and  set $\Delta=\bigcup_{i=1}^4 \BC^3_i \subset \BC^4$ to be the union of the four coordinate hyperplanes. For a tuple $\overline{r}=(r_1, r_2, r_3, r_4)$ of non-negative integers, define the torsion sheaf on $\Delta$
 \[\CE_{\overline{r}}=\bigoplus\iota_{i,*}\oO^{\oplus{r_i}}_{\BC^3_i},\]
 where $\iota_i: \BC^3_i\hookrightarrow \Delta$ are the inclusions of the irreducible components.
Our main moduli space of interest is Grothendieck's \emph{Quot scheme}
 \[\CM_{\overline{r}, n}:=\Quot_\Delta(\CE_{\overline{r}},n),\]
 which parametrizes isomorphism classes of quotients $[\CE_{\overline{r}}\onto Q]$, where we identify two such  quotients if they have the same kernel. We refer to  $\CM_{\overline{r}, n} $ as the \emph{moduli space of tetrahedron instantons}. The term \emph{tetrahedron} comes from the fact that 
 the singular variety $\Delta$ appears as the central fibre of a toric degeneration, whose associated polyhedral complex is a tetrahedron\footnote{We thank Francesca Carocci for explaining this to us from the point of view of Tropical Geometry (cf.~also \cite[Fig.~1]{PYZ_tetrahedron}).}.

 The geometry of $\CM_{\overline{r}, n}$ is subtle and hard to explicitly describe scheme-theoretically. In fact, already  in the easiest example  $r=(1,0,0,0)$, the moduli space of tetrahedron instantons is isomorphic to the \emph{Hilbert scheme of points} $\Hilb^n(\BC^3)$, whose actual dimension and number of irreducible components are not currently known.\\

Our first result is that the moduli space of tetrahedron instantons $ \CM_{\overline{r}, n}$ can be realized as the zero-locus of an isotropic section of a \emph{special orthogonal bundle} (cf.~Sec.~\ref{sec: virtual classes}) inside a smooth ambient space.
 \begin{theorem}[Theorem \ref{thm: isotropic construction}]\label{thm: zero locus intro}
    Let $\overline{r}=(r_1, r_2, r_3, r_4)$ and $n$ be  non-negative integers. Then
    \[
\begin{tikzcd}
& \CL\arrow[d]\\
\CM_{\overline{r}, n}\cong Z(s)\arrow[r, hook, "\iota"] &\CM^{\nc}_{\overline{r}, n},\arrow[u, bend right, swap, "s"]
\end{tikzcd}
\]
where $\CM^{\nc}_{\overline{r}, n}$ is a smooth variety, $\CL$ is a special orthogonal bundle, and  $s\in H^0(\CM_{\overline r,n}^{\nc},\CL)$ is an isotropic section.
\end{theorem}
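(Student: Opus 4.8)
The plan is to trade the intrinsic description of the Quot scheme for a presentation as a moduli of framed quiver representations, and then to separate the defining equations into those that keep the ambient space smooth and those that will become the isotropic section. \emph{A linear-algebra model.} A $\BC$-point of $\CM_{\overline r,n}$ is a quotient $[\CE_{\overline r}\onto Q]$ with $\dim_\BC H^0(Q)=n$. Writing $V=H^0(Q)$, the $\oO_\Delta$-module structure on $V$ --- where $\oO_\Delta=\BC[x_1,\dots,x_4]/(x_1x_2x_3x_4)$ --- is equivalent to four commuting endomorphisms $B_1,\dots,B_4\in\End(V)$ (multiplication by the coordinates), which then automatically satisfy $B_1B_2B_3B_4=0$. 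The surjection is recorded by the images $\psi_i\colon W_i=\BC^{r_i}\to V$ of the generators of the $i$-th summand; since that summand is supported on $Z(x_i)$, one gets the \emph{support relations} $B_i\psi_i=0$, and surjectivity translates into the \emph{stability} (cyclicity) condition that $V$ be generated by $\bigcup_i\image\psi_i$ under the $B_a$. First I would promote this dictionary to an isomorphism of moduli functors, obtaining
\[
\CM_{\overline r,n}\;\cong\;R^{\mathrm{st}}/\GL(V),\qquad R=\Big\{(B_\bullet,\psi_\bullet)\in\End(V)^{4}\oplus\textstyle\bigoplus_i\Hom(W_i,V)\ :\ [B_a,B_b]=0,\ B_i\psi_i=0\Big\},
\]
with $\GL(V)$ acting by conjugation and change of framing.

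\emph{The smooth ambient space.} Define $\CM^{\nc}_{\overline r,n}$ by \emph{dropping the six commutativity relations}, retaining only the support relations and stability; concretely it is the geometric quotient of the stable locus of $R^{\nc}=\{(B_\bullet,\psi_\bullet):B_i\psi_i=0\}$ by $\GL(V)$, a noncommutative Quot scheme. To prove it smooth I would show (i) the $\GL(V)$-action is free on the stable locus, so the quotient is geometric, and (ii) $R^{\nc}$ is smooth along its stable locus, by computing the differential of $(B_\bullet,\psi_\bullet)\mapsto(B_i\psi_i)_{i}$ and checking that stability forces it to be surjective. I expect (ii) to be the main obstacle: the map $B_i\psi_i$ is bilinear and its differential fails to be surjective at unstable points (for instance where some $\psi_i=0$ and $B_i$ is not surjective), so the argument must genuinely use that the four framings jointly generate $V$, and it is here that the singular geometry of $\Delta$ is felt.

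\emph{The special orthogonal bundle.} On $\CM^{\nc}_{\overline r,n}$ the six commutators assemble into the section
\[
s \;=\; B\wedge B\;=\;\sum_{a<b}[B_a,B_b]\,e_a\wedge e_b
\]
of the bundle $\CL$ with fibre $\End(V)\otimes\wedge^2\BC^4$, which descends from the stable locus of $R^{\nc}$ because the conjugation action preserves all the relevant structure. The holomorphic volume form supplies a perfect pairing $\wedge^2\BC^4\otimes\wedge^2\BC^4\to\wedge^4\BC^4\cong\BC$; tensoring it with the (nondegenerate, conjugation-invariant) trace form on $\End(V)$ equips $\CL$ with a nondegenerate symmetric quadratic form and a canonical orientation, making $\CL$ a special orthogonal bundle. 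By construction $Z(s)=\{[B_a,B_b]=0\ \forall a,b\}$, which is exactly $\CM_{\overline r,n}$; I would verify this holds scheme-theoretically, i.e.\ that the commutators cut out the Quot scheme with its natural, possibly non-reduced, structure.

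\emph{Isotropy.} It remains to verify $q(s)=0$. Pairing complementary wedges, the quadratic form evaluates to
\[
q(s)\;=\;2\,\Tr\!\big([B_1,B_2][B_3,B_4]-[B_1,B_3][B_2,B_4]+[B_1,B_4][B_2,B_3]\big),
\]
and expanding the three terms into their twelve quartic monomials and using cyclicity of the trace, everything cancels in pairs, so that $q(s)\equiv 0$ and $s$ is isotropic. This identity is the clean computational heart of the statement; the genuine difficulty lies instead in the first two steps, namely in matching the Quot-functor scheme structure with the quiver quotient and in proving smoothness of $\CM^{\nc}_{\overline r,n}$ in the presence of the support relations.
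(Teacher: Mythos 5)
There is a genuine gap, and it sits exactly where you flag ``the main obstacle'': your ambient space is not smooth, and the surjectivity-of-the-differential argument you propose cannot be repaired. You define $\CM^{\nc}_{\overline r,n}$ as the stable locus of $R^{\nc}=\{(B_\bullet,\psi_\bullet):B_i\psi_i=0\}$ modulo $\GL(V)$, i.e.\ you keep the support relations in the ambient space and put only the six commutators into the section. But the differential of $(B_\bullet,\psi_\bullet)\mapsto(B_i\psi_i)_i$ at a point has $i$-th component $(\delta B_i,\delta\psi_i)\mapsto \delta B_i\psi_i+B_i\delta\psi_i$, whose image lies in $\Hom(W_i,\image B_i)$ whenever $\psi_i=0$; since stability only requires the $\psi_j$ to \emph{jointly} generate $V$, such points are stable as soon as $r\geq 2$ (e.g.\ $\overline r=(1,1,0,0)$, $\psi_1$ cyclic, $\psi_2=0$, $B_2$ nilpotent). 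Already for $n=1$ and $W_i=\BC$ the locus $\{B_i\psi_i=0\}\subset\BC^2$ is a nodal pair of lines, singular at $B_i=\psi_i=0$, and these nodes occur at stable points. So $R^{\nc}$ is genuinely singular along its stable locus and the assertion ``$\CM^{\nc}_{\overline r,n}$ is a smooth variety'' fails for your choice of ambient space; no amount of exploiting joint generation will save it.

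The paper's construction differs precisely at this point: the ambient space is the free $\GL(V)$-quotient of an open subset of the \emph{full} affine space $\BC^4\otimes\End(V)\oplus\bigoplus_i\Hom(W_i,V)$, with no relations imposed at all (smoothness is then immediate), and the support relations $B_iI_i=0$ are moved \emph{into} the isotropic section alongside the commutators. This forces the bundle to be enlarged: since $\bigoplus_i\Hom(W_i,V)$ carries no natural self-pairing, one takes $\CL=\bigl(\Lambda^2\BC^4\otimes\End(V)\bigr)\oplus\bigoplus_i\bigl(\Hom(W_i,V)\oplus\Hom(W_i,V)^*\bigr)$ with the hyperbolic form on the second block, and the section $s=\bigl(\sum_{a,b}(e_a\wedge e_b)\otimes B_aB_b,\ (B_iI_i)_i,\ 0\bigr)$ is isotropic because its component in the dual summands vanishes (your trace identity handles the $\Lambda^2$ block; it is the same mechanism the paper uses). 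The orientation is then supplied by an explicit maximal isotropic subbundle rather than asserted as canonical. Your functor-of-points identification of $\CM_{\overline r,n}$ with the locus where all relations hold, and the isotropy computation itself, are correct and agree with the paper; it is the division of the relations between ambient space and section that must be redone.
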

We denote the smooth ambient space  $ \CM^{\nc}_{\overline{r}, n}$ by  \emph{non-commutative Quot scheme}, in accordance with  \cite{Ric_noncomm}, who computed its motivic classes in $K_0(\Var_\mathbf{k})$ in any dimension and over any algebraically closed field $\mathbf{k}$. Similar structural results --- exploiting  non-commutative Quot schemes --- appeared in \cite{BR_higher_rank, KR_magnificient}. The non-commutative Quot scheme $ \CM^{\nc}_{\overline{r}, n}$ is realised as the moduli space of (framed) representations of the four-loop quiver (cf.~Fig.~\ref{fig:tetrahedron-quiver}) via GIT.  Our realisation of the moduli space of tetrahedron instantons inside the non-commutative Quot scheme in Theorem \ref{thm: zero locus intro} should be seen as the representation-theoretic analogue of the \emph{symplectic reduction} techniques employed in \cite{PYZ_tetrahedron}, starting from  ADHM data (describing non-commutative instantons). Granting this analogy, the special orthogonal bundle $\CL$ (and its section) are naturally determined by some moment maps coming from gauge theory (cf. Sec. \ref{sec: PYZ}).\\

By the seminal work of Oh-Thomas \cite{OT_1} (cf.~Sec.~\ref{sec: virtual classes}), the zero-locus model in Theorem \ref{thm: zero locus intro} naturally induces a \emph{virtual fundamental class} and a \emph{virtual structure sheaf} of virtual dimension zero
   \begin{align*}
    [\CM_{\overline{r}, n}]^{\vir}&\in A_{0}\left(\CM_{\overline{r}, n}, \BZ\left[\tfrac{1}{2}\right]\right),\\
\widehat{\oO}^{\vir}_{\CM_{\overline{r}, n}}&\in K_0\left(\CM_{\overline{r}, n}, \BZ\left[\tfrac{1}{2}\right]\right).
\end{align*}
We remark that it is a priori unexpected that $\CM_{\overline{r}, n} $ has virtual dimension \emph{zero}. In fact, in all the previous applications of the construction of Oh-Thomas \cite{OT_1}, the resulting virtual dimension  is positive (cf. \cite{CKM_K_theoretic, CKM_Stable_Pairs}).
    We will use these virtual classes to define invariants of $\CM_{\overline{r}, n}$ which match the ones studied by Pomoni-Yan-Zhang \cite{PYZ_tetrahedron} in string theory. However, due to the non-properness of the moduli space $\CM_{\overline{r}, n}$, we need to define the invariants via equivariant residues, with respect to a torus action on $\CM_{\overline{r}, n}$ with proper fixed locus.
 \subsection{Partition function}
 The moduli space of tetrahedron instantons  $\CM_{\overline{r}, n}$ is endowed with an action of an algebraic torus $\TT$, 
  whose \emph{equivariant parameters} we denote by $t=(t_1, \dots, t_4), w=(w_{11}, \dots, w_{4r_4})$. We remark that the first four variables obey the relation $t_1t_2t_3t_4=1$,  a technical assumption required at the very heart of the constructions in  \cite{OT_1}. We define the \emph{tetrahedron instantons partition function} as the  generating series of $K$-theoretic invariants
 \begin{align}\label{eqn: intro partition}
\CZ_{\overline{r}}(q)=\sum_{n\geq 0} q^n\cdot \chi\left(\mathcal{M}_{\overline{r}, n}, \widehat{\oO}^{\vir}_{\mathcal{M}_{\overline{r}, n}}\right)\in \frac{\BQ(t_1^{ \frac{1}{2}}, t_2^{\frac{1}{2}}, t_3^{ \frac{1}{2}}, t_4^{ \frac{1}{2}}, w^{ \frac{1}{2}})}{(t_1t_2t_3t_4-1)}[\![q]\!].
\end{align}
 The moduli spaces  $\CM_{\overline{r}, n}$ are not proper, therefore   the invariants  need to be defined $\TT$-equivariantly. The partition function \eqref{eqn: intro partition} reproduces the one of Pomoni-Yan-Zhang \cite{PYZ_tetrahedron} (cf.~Sec.~\ref{sec: intro String}), and interpolates between Donaldson-Thomas theory in dimension three and four (cf.~Sec.~\ref{sec: intro DT}). Morally, the invariants \eqref{eqn: intro partition} should be seen as the algebro-geometric analogue of the $\widehat{A}$-genus of the moduli space of tetrahedron instantons. We stress that, since the virtual dimension of the moduli space of tetrahedon instantons is zero, we do not need to consider additional insertions.

 Our second result is the explicit computation of the tetrahedron instantons partition function\footnote{As this work was being finalised, a preprint of Pomoni-Yan-Zhang \cite{pomoni2023probing} appeared, where a formula similar to the one in Theorem \ref{thm: intro main thm} is independently conjectured.
} For a formal variable $x$, set $[x]=x^{\frac{1}{2}}-x^{-\frac{1}{2}}$. Letting  $e_i$ be the $4$-tuple with  $1$ in the $i$-th entry and $0$ in all the others, we set $\CZ^{(i)}(q)=\CZ_{e_i}(q) $.
 \begin{theorem}[Theorem \ref{thm:factorization}, \ref{thm: explicit expression inv}]\label{thm: intro main thm}
    Let $\overline{r}=(r_1, r_2, r_3, r_4)$  and $r=\sum_{i=1}^4r_i$. We have 
    \[
    \CZ_{\overline{r}}((-1)^rq)=\Exp\left(-\frac{[t_1t_2][t_1t_3][t_2t_3]}{[t_1][t_2][t_3][t_4]}\frac{[\kappa_{\overline{r}}]}{[\kappa_{\overline{r}}^{\frac{1}{2}} q][\kappa_{\overline{r}}^{\frac{1}{2}}q^{-1}]} \right),
    \]
    where we set the weight  $\kappa_{\overline{r}}=\prod_{i=1}^4t_i^{-r_i}$ and $\Exp$ is the plethystic exponential \eqref{eqn: on ple}. Moreover, the instanton partition function admits the factorisation
    \begin{align*}
        \CZ_{\overline{r}}(q)=\prod_{i=1}^4 \prod_{l=1}^{r_i}\CZ^{(i)}\left((-1)^{r+1}q\kappa_i^{\frac{-r_i-1}{2}+l}\prod_{j=1}^4\kappa_j^{\frac{r_j\cdot\sgn(i-j)}{2}}\right),
    \end{align*}
    where we set the weights  $\kappa_i=t_i^{-1}$.
\end{theorem}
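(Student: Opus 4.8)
The plan is to compute the $K$-theoretic invariants by equivariant localisation on the zero-locus model of Theorem \ref{thm: zero locus intro}, reducing $\chi(\CM_{\overline{r},n},\widehat{\oO}^{\vir})$ to a finite sum over $\TT$-fixed points, and then to extract both the factorisation and the closed form from the combinatorics of the fixed-point contributions. First I would classify the fixed locus. Since it is proper and $\CE_{\overline{r}}=\bigoplus_i \iota_{i,*}\oO_{\BC^3_i}^{\oplus r_i}$ splits over the four components and their framings, a $\TT$-fixed quotient must respect this grading (the framing weights $w_{il}$ being generic) and hence decomposes componentwise; on each $\BC^3_i\cong\BC^3$ the torus-invariant finite-colength quotients of $\oO_{\BC^3_i}$ are the monomial quotients, indexed by plane partitions. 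The fixed points are therefore isolated and parametrised by tuples $\bm{\pi}=(\pi_{i,l})_{1\le i\le 4,\,1\le l\le r_i}$ of plane partitions with $\sum_{i,l}|\pi_{i,l}|=n$.

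Next I would apply the Oh-Thomas virtual localisation formula \cite{OT_1} to the presentation of $\CM_{\overline{r},n}$ as the isotropic zero locus of the special orthogonal bundle $\CL$, writing the invariant as a sum over $\bm{\pi}$ of the square-root Euler class of the virtual tangent space $T^{\vir}_{\bm{\pi}}$. The crux here is to compute the $\TT$-character of $T^{\vir}_{\bm{\pi}}$ from the four-loop quiver data and to pin down the correct square root compatible with the self-duality $\overline{T^{\vir}_{\bm{\pi}}}=T^{\vir}_{\bm{\pi}}$ forced by the orthogonal structure together with the relation $t_1t_2t_3t_4=1$; this is where the global sign $(-1)^r$ and the $\BZ[\tfrac12]$-coefficients originate. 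I expect the character to split as a sum of \emph{diagonal} terms $V_{\pi_{i,l}}$, each equal to the vertex character of the rank-one $\BC^3$ theory, plus \emph{off-diagonal} terms indexed by pairs $((i,l),(j,m))$ of distinct framings.

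For $\overline{r}=e_i$ the space is $\Hilb^n(\BC^3)$ and only the diagonal contribution survives, so summing over plane partitions reproduces the $K$-theoretic Donaldson-Thomas vertex of $\BC^3$ in MacMahon-type closed form; this I would record as the rank-one series $\CZ^{(i)}(q)$, which is precisely the case $\overline{r}=e_i$ of the asserted formula. The factorisation then follows by showing that the contribution of each off-diagonal pair is independent of the individual shapes $\pi_{i,l},\pi_{j,m}$ and depends only on their total sizes and on the two framing weights, so that after the prescribed substitution $q\mapsto (-1)^{r+1}q\,\kappa_i^{\frac{-r_i-1}{2}+l}\prod_{j=1}^4\kappa_j^{\frac{r_j\sgn(i-j)}{2}}$ the double sum over $\bm{\pi}$ separates into a product of rank-one sums. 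Substituting the rank-one closed form into this product and collecting terms through the plethystic exponential then yields the single-formula expression with total weight $\kappa_{\overline{r}}=\prod_i t_i^{-r_i}$.

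The hardest part will be the bookkeeping of the off-diagonal terms: one must prove that the square-root Euler class of the cross-terms between framings $(i,l)$ and $(j,m)$ carries no genuine plane-partition dependence but only the half-integer $q$-shift above, with the half-powers and the alternating $\sgn(i-j)$ pattern dictated by the chosen square root. Reconciling the orientation and sign conventions of the Oh-Thomas class with this precise prescription — in particular the emergence of the overall $(-1)^r$ and the telescoping of the off-diagonal $[\,\cdot\,]$-factors into a pure monomial rescaling of $q$ — is the delicate point, and I would settle it by a careful analysis of the $\TT$-weights of the quiver data under the constraint $t_1t_2t_3t_4=1$.
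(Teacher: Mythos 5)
Your outline follows the paper's general architecture (fixed points as tuples of plane partitions, Oh--Thomas localisation, diagonal versus off-diagonal splitting of the vertex character, reduction to the rank-one $\BC^3$ series), but it has a genuine gap at the step where the factorisation is supposed to emerge. You assert that the contribution of each off-diagonal pair $((i,l),(j,m))$ ``is independent of the individual shapes $\pi_{i,l},\pi_{j,m}$ and depends only on their total sizes and on the two framing weights.'' For generic framing parameters this is false: the operator $[\cdot]$ applied to the cross-term $\mathsf{v}^{(ij,lk)}_{\overline{\pi}}$ is a product of factors $[w_{il}^{-1}w_{jk}t^{\mu}]$ over the individual monomials $t^{\mu}$ occurring in $Z_{\pi_{jk}}\overline{Z_{\pi_{il}}}$ (and its companion), so it is a nontrivial rational function of $w_{il}^{-1}w_{jk}$ and the $t_i$ that genuinely remembers the shapes of the partitions. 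The double sum therefore does not separate as you claim, and no amount of bookkeeping of the chosen square root will make it do so at generic $w$.

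What is missing is the rigidity input: the paper first proves (Theorem \ref{thm: framinh independence}, via the compact/non-compact weight criterion of Proposition \ref{prop: Noah} and the properness of the fixed loci of the relevant subtori, which rests on the properness of the Quot-to-Chow fibre over $n\cdot[0]$) that the full partition function is \emph{independent} of the framing parameters $w_{il}$. Only after that is one entitled to specialise $w_{il}=L^{N_{il}}$ with hierarchically separated exponents and let $L\to\infty$; in that limit each off-diagonal factor telescopes to the pure monomial $(-\kappa_i^{1/2})^{|\pi_{jk}|}(-\kappa_j^{-1/2})^{|\pi_{il}|}$ (Proposition \ref{prop: limits}), which is where the size-only dependence, the half-integer $q$-shifts and the $\sgn(i-j)$ pattern actually come from. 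Without the framing-independence theorem your argument has no mechanism to eliminate the $w$-dependence that is manifestly absent from the final formula. A secondary, lesser issue is that you defer the sign $(-1)^{\sigma_{\overline{\pi}}}$ to ``careful analysis'': in the paper this is a substantive step (Theorem \ref{thm: correct sign body text}, resting on the Kool--Rennemo sign formula and the canonical-vertex comparison in the appendix), not something that falls out of weight bookkeeping alone.
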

Theorem \ref{thm: intro main thm} generalizes the main result of \cite{FMR_higher_rank}, which deals with the Quot scheme of the  three-dimensional affine space. The factorisation of the invariants into \emph{rank 1 theories}  has already been observed in a similar situation for $K$-theoretic invariants \cite{FMR_higher_rank, KR_magnificient, CKM_crepant}, motivic invariants \cite{MR_nested_Quot, CR_framed_motivic, CRR_higher_rank}, in String Theory \cite{NP_colors, dZNPZ_playing_index_M_theory} and could find a conceptual answer in the recent (but unrelated) work of Feyzbakhsh-Thomas \cite{FT_0, FT_1} on generalized Donaldson-Thomas invariants, who exploit wall-crossing techniques to show the rank reduction.
\subsubsection{Strategy of the proof}
We explain here the main ideas in the proof of Theorem \ref{thm: intro main thm}.  The torus fixed locus  $\CM_{\overline{r}, n}^\TT $ is reduced, zero-dimensional and in bijection with tuples $\overline{\pi}=(\overline{\pi}_1, \dots, \overline{\pi}_4)$, where each $\overline{\pi}_i$ is an $r_i$-tuple of \emph{plane partitions}. Therefore by  Oh-Thomas  virtual localisation Theorem \cite{OT_1},  the tetrahedron instantons partition function can be expressed as
\begin{align}\label{eqn: intro loc}
 \CZ_{\overline{r}}(q)=\sum_{\overline{\pi}}(-1)^{\sigma_{\overline{\pi}}}[-\mathsf{v}_{\overline{\pi}}]\cdot q^{|\overline{\pi}|}.
\end{align}
Let us pause for a moment to explain the notation in \eqref{eqn: intro loc}. The sum runs over all tuples of plane partitions $\overline{\pi}$, and $|\overline{\pi}|$ denotes their \emph{size}. To each $\overline{\pi}$, we attach two items: a Laurent polynomial in the equivariant parameters $ \mathsf{v}_{\overline{\pi}} $ (the \emph{square root} of the virtual tangent space) and an (a priori non-explicit) sign $(-1)^{\sigma_{\overline{\pi}}} $. By Oh-Thomas virtual localisation, each tuple of plane partitions $\overline{\pi}$ contributes to the localised invariants with rational functions in the equivariant parameters, obtained by applying the operator $[\cdot]$ to the \emph{vertex term} $\mathsf{v}_{\overline{\pi}}$, but only up to a \emph{sign}. Despite this situation being very similar to the original  treatment of the topological vertex in  \cite{MNOP_1}, the presence of such a sign in our context is what makes the computation of the partition function
hard and cumbersome. We remark that the  choice of square root $\mathsf{v}_{\overline{\pi}}$ is not canonical; rather, one should think of the pair $(\mathsf{v}_{\overline{\pi}}, (-1)^{\sigma_{\overline{\pi}}})$ to be canonical in a suitable sense, cf.~\cite{Mon_canonical_vertex}.\\

In order to prove Theorem \ref{thm: intro main thm}, we need the following two crucial technical results. The first one is of local nature and shows that for our particular choice of square root, the corresponding sign is constantly $(-1)^{\sigma_{\overline{\pi}}}=1$ (Theorem \ref{thm: correct sign body text}). The proof of these results follows by a purely combinatorial study of the vertex terms applied to  a structural result of Kool-Rennemo \cite{KR_magnificient} (cf.~Appendix \ref{sec: app}).

The second result is  of global nature, and asserts that the tetrahedron instantons  partition function does not depend on the \emph{framing parameters} $w_{ij}$, for all $i=1, \dots, 4$ and $j=1, \dots, r_i$ (Theorem \ref{thm: framinh independence}). This independence is the incarnation of a \emph{rigidity principle} and ultimately relies on the properness of the Quot-to-Chow morphism, and can already be spotted by the explicit formula for the partition function in Theorem \ref{thm: intro main thm}. We remark that this framing independence is peculiar to the $K$-theoretic invariants here considered, and fails for the \emph{elliptic} refinement proposed in Section \ref{sec: witten} (cf.~Remark \ref{rmk: failure witten}).

Granting this framing independence, we can set the framing parameters $w_{ij}$ to arbitrary values and take arbitrary limits. A suitable choice of such limits yields a  (non-trivial!) combinatorial expression of the partition function where the factorisation property becomes evident. Finally, the explicit expression in terms of the plethystic exponential is obtained by combining the factorised form of the partition function with the explicit formula for the \emph{rank 1} case, which had been initially conjectured by Nekrasov in String Theory \cite{Nek_Z_theory} and later proved by Okounkov \cite{Okounk_Lectures_K_theory}.

\subsubsection{Cohomological limit}
A second source of interesting invariants comes from integrating over the \emph{virtual fundamental class} in equivariant Chow cohomology, rather than in $K$-theory, as classically considered in \cite{MNOP_1}. We define the \emph{cohomological tetrahedron instantons partition function } as
\begin{align}\label{eqn: intro cohom}
\CZ^{\coh}_{\overline{r}}(q)=\sum_{n\geq 0}q^n\cdot \int_{[\CM_{\overline{r}, n}]^{\vir}}1 \in \frac{\BQ(s_1, s_2, s_3, s_4, v)}{(s_1+s_2+s_3+s_4)}[\![q]\!],
\end{align}
where $s_1, \dots, s_4, v_{11}, \dots, v_{4r_4}$ are the generators of $\TT$-equivariant Chow cohomology $A^\TT_*(\pt)$ and the integral makes sense only equivariantly. The partition function \eqref{eqn: intro cohom} carries less refined information with respect to its $K$-theoretic counterpart \eqref{eqn: intro partition}, and can in fact be realized as a \emph{limit} of the latter. This limiting procedure at the level of the partition functions in Supersymmetric String Theory corresponds to a suitable \emph{dimensional reduction} of the two-dimensional low energy effective theory. Indeed, starting from the system of D1-D7-branes in type IIB String Theory on $T^2\times\BC^4$ engineering the moduli space of tetrahedron instantons \cite{PYZ_tetrahedron}, one can shrink a circle in $T^2=S^1\times S^1$ and T-dualise along the small $S^1$. The original brane system in the T-dualised picture then reduces to a system of D0-D6 branes on $S^1$, corresponding to the quantum mechanics of the tetrahedron instantons quiver in Fig.~\ref{fig:tetrahedron-representation}. An additional shrinking to a point of the remaining $S^1$, followed by S-duality, leaves us with a D(-1)-D5-brane system, whose low-energy partition function is exactly \eqref{eqn: intro cohom}.

Applying such a limit,  we prove a closed formula for the partition function \eqref{eqn: intro cohom}. 
\begin{corollary}[Corollary \ref{cor: cohom}]\label{cor: intro cohom}
     Let $\overline{r}=(r_1, r_2, r_3, r_4)$  and $r=\sum_{i=1}^4r_i$. We have 
     \[\CZ^{\coh}_{\overline{r}}(q)=\mathrm{M}((-1)^rq)^{-\frac{(s_1+s_2)(s_1+s_3)(s_2+s_3)(r_1s_1+r_2s_2+r_3s_3 + r_4s_4)}{s_1s_2s_3s_4}},\]
     where $\mathrm{M}(q)$ is the MacMahon series \eqref{eqn: MacMahon}.
\end{corollary}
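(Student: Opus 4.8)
The plan is to deduce Corollary~\ref{cor: intro cohom} from the explicit $K$-theoretic formula of Theorem~\ref{thm: intro main thm} by passing to the \emph{cohomological limit}, which I expect to be the rigorous counterpart of the dimensional reduction described before the statement. Concretely, I would substitute $t_i=e^{\beta s_i}$ (compatibly with $t_1t_2t_3t_4=1 \leftrightarrow s_1+s_2+s_3+s_4=0$) and $w_{ij}=e^{\beta v_{ij}}$, and take $\beta\to 0$ in $\CZ_{\overline{r}}(q)$. Since the virtual dimension is zero, no normalising power of $\beta$ is needed, and the first task is to establish the identity
\[
\CZ^{\coh}_{\overline{r}}(q)\;=\;\lim_{\beta\to 0}\;\CZ_{\overline{r}}(q)\big|_{t_i=e^{\beta s_i},\, w_{ij}=e^{\beta v_{ij}}},
\]
after which one simply computes the right-hand side from the closed formula.

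The identity above is the genuinely structural step, and I expect it to be the main obstacle. It is the compatibility, inside the Oh-Thomas formalism, between the symmetrised virtual structure sheaf $\widehat{\oO}^{\vir}_{\CM_{\overline{r},n}}$ and the virtual fundamental class $[\CM_{\overline{r},n}]^{\vir}$: the equivariant holomorphic Euler characteristic of the former must degenerate, as $\beta\to 0$, to the integral of $1$ against the latter. I would argue this termwise through the virtual localisation of \eqref{eqn: intro loc}: each fixed point $\overline{\pi}$ contributes $(-1)^{\sigma_{\overline{\pi}}}[-\mathsf{v}_{\overline{\pi}}]$, a ratio of brackets with equally many factors in numerator and denominator (again because the virtual dimension is zero), and under $x=e^{\beta s}\mapsto [x]=2\sinh(\beta s/2)\sim\beta s$ this ratio converges to the corresponding ratio of weights, i.e.\ the localised cohomological contribution of $\overline{\pi}$ to $\int_{[\,\cdot\,]^{\vir}}1$. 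Crucially, Theorem~\ref{thm: correct sign body text} guarantees $(-1)^{\sigma_{\overline{\pi}}}=1$, so there is no sign to track and the two localised sums match coefficient by coefficient in $q$, where each coefficient is a finite sum so that the limit commutes with summation.

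Granting this, the remaining steps are a direct calculation with the formula of Theorem~\ref{thm: intro main thm}. A bracket of a monomial satisfies $[\prod_i t_i^{a_i}]\sim\beta\sum_i a_i s_i$, whence $[t_1t_2]\sim\beta(s_1+s_2)$, $[t_1t_3]\sim\beta(s_1+s_3)$, $[t_2t_3]\sim\beta(s_2+s_3)$, $[t_i]\sim\beta s_i$, and, writing $K=r_1s_1+r_2s_2+r_3s_3+r_4s_4$, we get $[\kappa_{\overline{r}}]\sim-\beta K$ since $\kappa_{\overline{r}}=e^{-\beta K}$. The two $q$-brackets stay finite, as $\kappa_{\overline{r}}^{1/2}\to 1$ forces $[\kappa_{\overline{r}}^{1/2}q]\to[q]$ and $[\kappa_{\overline{r}}^{1/2}q^{-1}]\to[q^{-1}]=-[q]$. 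Numerator and denominator of the argument of $\Exp$ both scale as $\beta^4$, so the powers of $\beta$ cancel and the limit is finite; collecting leading coefficients yields
\[
-\frac{[t_1t_2][t_1t_3][t_2t_3]}{[t_1][t_2][t_3][t_4]}\frac{[\kappa_{\overline{r}}]}{[\kappa_{\overline{r}}^{1/2}q][\kappa_{\overline{r}}^{1/2}q^{-1}]}\;\longrightarrow\;-\frac{(s_1+s_2)(s_1+s_3)(s_2+s_3)\,K}{s_1s_2s_3s_4}\cdot\frac{1}{[q]^2}.
\]
Since $[q]^2=q-2+q^{-1}$, equivalently $1/[q]^2=q/(1-q)^2$, this is exactly $-C\cdot q/(1-q)^2$ with $C=(s_1+s_2)(s_1+s_3)(s_2+s_3)K/(s_1s_2s_3s_4)$, the exponent in the statement.

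Finally I would conclude by recognising the MacMahon series. In the cohomological limit the plethystic exponential acts only on the counting variable $q$, treating the rational function $C$ in the $s_i$ as a scalar with respect to the power operations; since $q/(1-q)^2=\sum_{n\ge1}n q^n$ one has $\Exp\big(q/(1-q)^2\big)=\prod_{n\ge1}(1-q^n)^{-n}=\mathrm{M}(q)$, and hence $\Exp\big(-C\,q/(1-q)^2\big)=\mathrm{M}(q)^{-C}$. Because Theorem~\ref{thm: intro main thm} evaluates the series at $(-1)^r q$ and the substitution $q\mapsto(-1)^r q$ commutes with the $\beta\to 0$ limit, this gives $\CZ^{\coh}_{\overline{r}}((-1)^r q)=\mathrm{M}(q)^{-C}$, that is $\CZ^{\coh}_{\overline{r}}(q)=\mathrm{M}((-1)^r q)^{-C}$, which is the claimed formula.
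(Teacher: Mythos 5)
Your proposal is correct and follows essentially the same route as the paper: the paper establishes the identity $\CZ^{\coh}_{\overline{r}}(q)=\lim_{b\to 0}\CZ_{\overline{r}}(q)|_{t_i=e^{bs_i}}$ exactly as you do, via the rank-zero vertex term and the expansion $[t^\mu]|_{t_i=e^{bs_i}}=b\,e(t^\mu)+O(b^2)$ at each fixed point, and then evaluates the limit of the closed formula of Theorem \ref{thm: explicit expression inv} inside the plethystic exponential to recognise $\mathrm{M}((-1)^rq)^{-C}$. The only cosmetic difference is that the paper takes the limit after expanding $\Exp$ as $\exp\bigl(\sum_n \tfrac{1}{n}f(t^n,q^n)\bigr)$, whereas you take it on the argument of $\Exp$ directly, which is legitimate precisely because the exponent $C$ is homogeneous of degree zero in the $s_i$.
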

\subsubsection{Witten genus}
  Pomoni-Yan-Zhang \cite[Sec.~6]{PYZ_tetrahedron} provide an alternative computation of the tetrahedron instanton partition function --- again, in string theory --- from the elliptic genus of the low-energy worldvolume theory on D1-branes. Mathematically, we realize this as an \emph{elliptic refinement} of the invariants in \eqref{eqn: intro partition}, which we call \emph{virtual Witten genus} of $\CM_{\overline{r}, n}$
    \[
    \CZ^{\mathrm{ell}}_{\overline{r}}=\sum_{n\geq 0}q^n\cdot \chi\left(\CM_{\overline{r}, n}, \widehat{\oO}_{\CM_{\overline{r}, n}}^{\vir}\otimes \bigotimes_{k\geq 1} \Sym^\bullet_{p^k}\left(T^{\vir}_{\CM_{\overline{r}, n}}\right) \right)\in 
     \frac{\BQ(t_1^{ \frac{1}{2}}, t_2^{ \frac{1}{2}}, t_3^{ \frac{1}{2}}, t_4^{ \frac{1}{2}}, w^{ \frac{1}{2}})}{(t_1t_2t_3t_4-1)}[\![p,q]\!],\]
     where $T^{\vir}_{\CM_{\overline{r}, n}} $ denotes the virtual tangent bundle of $\CM_{\overline{r}, n}$ and $p$ is the \emph{elliptic parameter}. This elliptic genus is a generalization of the \emph{virtual chiral elliptic genus} of  Fasola-Monavari-Ricolfi \cite[Sec.~8.1]{FMR_higher_rank}, and appeared in a similar context in Donaldson-Thomas theory of Calabi-Yau fourfolds \cite[Def.~5.8]{Bojko_wall-crossing}.

     As already mentioned, the virtual Witten genus in general  depends on the framing parameters, thus preventing us from computing its generating series using the factorisation of the invariants. Nevertheless, under some suitable specialisations of the equivariant parameters $t_1, \dots, t_4$, we expect  that the generating series enjoys some rigidity behaviour of the elliptic parameter (see Remark \ref{rem: rigidity witten genus}).
 \subsection{Relation to Donaldson-Thomas Theory}\label{sec: intro DT}
Our description of the moduli space of tetrahedron instantons and its partition function nicely fits into the realm of Donaldson-Thomas (DT) theory. In fact, as already pointed out by Pomoni-Yan-Zhang \cite{PYZ_tetrahedron}, tetrahedron instantons interpolate between (higher rank) DT theory of $\BC^3$ and $\BC^4$, whose relevant moduli spaces are $\Quot_{\BC^3}({\oO^r, n})$ and $\Quot_{\BC^4}(\oO^r, n)$ (cf.~\cite{FMR_higher_rank, CKM_K_theoretic, KR_magnificient}). The interesting aspect of our construction is that it shares common features with both DT theories, in dimension three and four. In fact, despite $\CM_{\overline{r}, n}$ is a moduli space of sheaves on a threefold and   possesses a \emph{zero-dimensional} virtual fundamental class --- as in the classical DT theory of threefolds --- it seems that its virtual structure  cannot be defined directly with the machinery of (two-terms)  \emph{perfect obstruction theories} of Behrend-Fantechi and Li-Tian \cite{BF_normal_cone, LT_virtual_cycle}, but crucially needs a symmetric (three-term) obstruction theory as in the work of Oh-Thomas \cite{OT_1}, which was specifically designed for DT theory of Calabi-Yau fourfolds. Nevertheless, for suitable choices of the vector $\overline{r}$, the moduli space $\CM_{\overline{r}, n} $ reproduces the Quot scheme $\Quot_{\BC^3}({\oO^r, n})$, whose  virtual structure \emph{does} come from a perfect obstruction theory, induced by a global critical locus structure \cite{BR_higher_rank}. This shows that the natural class one should consider is always the \emph{twisted} virtual structure sheaf, which is natural from the point of view of DT theory of Calabi-Yau fourfolds, while in DT  theory of threefolds, one should \emph{twist} the virtual structure sheaf by hand to make the invariants more relevant to study, following the circle of ideas  initiated by Nekrasov-Okounkov \cite{NO_membranes_and_sheaves}. On the other hand, Theorem \ref{thm: intro main thm} shows that the tetrahedron instantons partition function is a specialisation of the partition function of higher rank Donaldson-Thomas invariants of $\BC^4$, whose explicit expression was proposed  by Nekrasov-Piazzalunga \cite{NP_colors} and which is conjectured to be the mother of all instanton partition functions \cite{Nek_magnificient_4}.

From the point of view of DT theory, one may ask if the zero-locus construction of Theorem \ref{thm: zero locus intro}  --- and its induced virtual structure --- could have an intrinsic deformation-theoretic interpretation, which does not require the construction of a smooth ambient space via a quiver with potential as in our work (cf.~\cite{Ric_virtual_motives_Quot}). This is actually the case when our moduli space of tetrahedron instantons reproduces the Hilbert scheme  of points $\Hilb^n(\BC^3)$ (cf.~\cite{RS_critical}, where it is shown that the perfect obstruction theory induced by being a critical locus coincides with the one coming by deformation theory). From this  deformation-theoretic point of view, it would be natural to replace $\Delta\subset \BC^4$ with an arbitrary pair  $(X, Y)$, where $X$ is a quasi-projective Calabi-Yau fourfold and $Y$ is a boundary divisor, such that each irreducible component is smooth. From this point of view, we would find it natural to expect a virtual pullback  formula to hold, generalizing the one of  Park \cite{Park_DT_pullbacks}, which would explain why the partition function of the magnificent four model \cite{NP_colors} specialises to the tetrahedron instanton partition function\footnote{Physically, this would correspond to the fact that our configuration of intersecting D6 branes should come from the annihilation of D8 and anti-D8 branes through tachyon condensation.}. Finally, suitably describing the virtual structure on $ \CM_{\overline{r}, n}$ via deformation theory could shed some light on its possible relation to \emph{degeneration phenomena}. A piece of strong evidence in this direction is that the invariants obtained on a moduli space of sheaves over $\Delta$ are reduced to a product of invariants over moduli spaces of sheaves over the irreducible components of $\Delta$, and that the virtual classes are (morally) deformation invariant\footnote{Deformation invariance would be expected  if our moduli spaces were proper, for instance, if we started with a pair $(X, Y)$ such that $X$ is a smooth \emph{projective} Calabi-Yau fourfold, and $Y$ a divisor.}. We plan to pursue these questions and investigate their interpretation in string theory in the future.

 \subsection{String Theory}\label{sec: intro String}
The moduli space of tetrahedron instantons was introduced in the context of Supersymmetric String Theory in \cite{PYZ_tetrahedron}. It describes the low energy theory of a system of D1-branes probing some general configuration of intersecting D7-branes. This in turn is interpreted as capturing instanton dynamics when codimension-two supersymmetric defects are present. On the geometric side, Supersymmetric String Theory in general, and D-branes systems specifically, have proved useful to provide a physical realisation of interesting moduli spaces and to compute several generating series of their (virtual) invariants. The point of contact between the mathematical and physical realms can be found in the study of the BPS sector of string theories. Indeed, this is a particularly amenable task on the physical side, as it consists of quantities which are known to be protected from quantum corrections and can therefore be studied non-perturbatively. Also, BPS-bound state counting can usually be interpreted in terms of counting/classification problems in geometry, and the technique of localisation often available in supersymmetric settings makes it feasible to compute relevant partition functions exactly. Here the connection to enumerative geometry is made even clearer, as supersymmetric localisation may be viewed as an infinite-dimensional analogue of equivariant localisation in different settings (eg., cohomological, cf.~\cite{AB_localization}, or K-theoretic localisation, cf.~\cite{Tho:formule_Lefschetz}).

One particular class of moduli spaces which can be obtained in physics from systems of D-branes are moduli spaces of (generalised) instantons. The ADHM construction of moduli spaces of anti-self dual connections as symplectic quotients shed light on their physical realisation either in the language of Sigma Models \cite{Witten_ADHM_Sigma_Model}, or as moduli spaces of BPS vacua of systems of D1-D5-branes in type IIB Supersymmetric String Theory \cite{Douglas_D_branes}. Similarly, a resolution of singularities of the moduli space of ideal instantons on $\BR^4$, realised by moduli of framed torsion-free sheaves on $\BP^2$ \cite{Nak_lectures_Hilb_schemes}, can be obtained from D-brane configurations in superstring theory, modelling instantons on noncommutative $\BR^4$. Partition functions of such theories are then computed explicitly (and non-perturbatively) via equivariant localisation \cite{MNS,Nek_instantons}. Several interesting generalisations of these constructions have been studied over the past years, e.g.~the construction of instantons as systems of D-branes on orbifolds or in the presence of surface defects \cite{DM_ALE,KT_chainsaw,BFT_defects,bonelli2020flags}, as well as on other four-dimensional manifolds \cite{Cherkis,Witten_TaubNut,BPT,FMT,BFMMRT}. Moreover, ADHM-type constructions can be exploited to construct instanton-like objects in higher dimensions \cite{BKS,MNS_D_Particle,HP}, thus providing a physical realisation of Donaldson-Thomas theory and its generalisation on fourfolds. The relation between Donaldson-Thomas theory on threefolds and string theoretic constructions in physics, such as quiver matrix models, instantons in orbifold or non-commutative backgrounds and elliptic generalisations, have been studied, for example, in \cite{AK_quiver_matrix_model,CSS_cohom_gauge_theory_DT,MR2835515,MR3046463,BBPT_elliptic_DT,Cir_M2_index}, and is rigorously understood in mathematics as in \cite{MNOP_1,MNOP_2,FMR_higher_rank}. On the other hand, the rather difficult task of interpreting higher-dimensional DT theory can be similarly undertaken in the context of String Theory as a system of D0-D8(-$\overline{\rm D8}$)-branes wrapping the affine four-space  \cite{Nek_Magnificent4_advances,NP_colors}. Also, it has been shown to enjoy a realisation as a Topological Field Theory localising on non-commutative $\Spin(7)$-instanton configurations on $\BR^8$, leading moreover to an ADHM-like construction \cite{BFTZ_ADHM8D}, and a natural generalisation to more general backgrounds, such as orbifolds (cf.~\cite{BFTZ_ADHM8D,CKM_crepant,SzaboTirelli-Orbifolds}). The construction of tetrahedron instantons as moduli spaces of BPS solutions in the background of intersecting D-branes also sits nicely in these lines of research. Indeed, the low-energy D-brane dynamics is captured by a gauge theory whose moduli space of vacua can be understood as the moduli space of stable representations of a suitable quiver, which is then interpreted as a Quot scheme on a singular threefold, which is suggestive of a DT-like interpretation. Also, the study of moduli spaces of instantons on threefolds with the inclusion of codimension-two defects naturally involves structures which are inherently fourfold in nature, thus interpolating between two different regimes in DT theory.

We point out that, during the very final stage of finishing this paper,  a preprint of  Pomoni-Yan-Zhang \cite{pomoni2023probing} appeared where they  independently conjecture the explicit formula for the tetrahedron instanton partition function, which we rigorously prove in Theorem \ref{thm: intro main thm}. Their   main focus is its relation with the index in $M$-theory, following the circle of ideas of Nekrasov-Okounkov \cite{NO_membranes_and_sheaves}.

\subsection*{Acknowledgments}
We are grateful to Francesca Carocci and Woonam Lim for useful discussion, to  Martijn Kool and Jørgen Rennemo for sharing an early version of their manuscript \cite{KR_magnificient}, and to Andrea Ricolfi for previous collaborations and enlightening discussions on Quot schemes over the years.
N.F. was supported by the Engineering and Physical Sciences Research Council through the grant no. EP/S003657/2.
S.M. was  supported by   the Chair of Arithmetic Geometry, EPFL.
 \section{Moduli space of tetrahedron instantons}
 \subsection{Virtual fundamental classes}\label{sec: virtual classes}
 Recently Oh-Thomas \cite{OT_1} devised a machinery to construct virtual fundamental classes on moduli spaces of sheaves on Calabi-Yau fourfolds. We will use a toy model of their work to define a virtual fundamental class on the moduli space of tetrahedron instantons. For the construction of the virtual fundamental class à la Oh-Thomas in a more general setting, we refer to Park \cite{Park_DT_pullbacks}.
 
Following the conventions of  \cite[Sec.~2]{OT_1}, we say that a rank $2r$\footnote{For simplicity, we assume here that the rank is even; cf.~\cite[Sec.~1]{OT_1} for the definition in the general case.} vector bundle $\CE$ on a quasi-projective scheme $\CA$ is a \emph{special orthogonal bundle} (or an \emph{$\
 SO(2r, \BC)$-bundle}) if $\CE$ is endowed with a non-degenerate  quadratic pairing 
 \[q:\CE\otimes \CE \to \oO_{\CA},\]
together with a choice of trivialization  (an \emph{orientation})
 \[
 o: \det\CE\xrightarrow{\sim} \oO_\CA,
 \]
whose square\footnote{See \cite[Def. 2.1]{OT_1} for the precise sign conventions of orientations.} is $o^{\otimes 2}= \pm\det q $. We say that a subbundle $\Lambda \subset \CE$ is \emph{maximal isotropic} if $\rk \Lambda = r$ and $q|_{\Lambda}=0$. Any choice of maximal isotropic subbundle naturally induces an orientation of $\CE$. In fact, taking determinants of the  short exact sequence 
\[
0\to \Lambda \to \CE \to \Lambda^*\to 0, 
\]
yields $\det \CE\cong \det \Lambda \otimes \det \Lambda^* \cong \oO_\CA$, which gives the required isomorphism (cf.~\cite[Sec.~2.3]{OT_1} for the precise sign conventions).

We say that a section $s\in H^0(\CA, \CE)$ is \emph{isotropic} if $q(s,s)=0$. 
   Let a scheme $Z$
\[
\begin{tikzcd}
& \CE\arrow[d]\\
Z:=Z(s)\arrow[r, hook, "\iota"] &\CA\arrow[u, bend right, swap, "s"]
\end{tikzcd}
\]
be the zero locus of an isotropic section $s\in \Gamma(\CA,\CE)$, where $\CE$ is a $SO(2r, \BC)$-bundle over a smooth quasi-projective variety $\CA$. 
Then there exists an induced obstruction theory on $Z$
\begin{equation}\label{eqn: obs th}
 \begin{tikzcd}
 \big[\Omega^*_{\CA|Z}\arrow[r, "ds"] & \CE^*|_Z\arrow[d, "s"]\arrow[r,"(ds)^*"]&\Omega_{\CA|Z}\big]\arrow[d, phantom, "\parallel"]\arrow[r, phantom, "\cong"]&\BE\arrow[d]\\
    & \big[\CI/\CI^2\arrow[r, "d"]&\Omega_{\CA|Z}\big]\arrow[r, phantom, "\cong"]&\BL_Z,
\end{tikzcd}
\end{equation}
in $\derived^{[-2,0]}(Z)$, where $\CI \subset \oO_{A}$ is the ideal sheaf of the inclusion $Z \into \CA$ and $\BL_Z$ denotes the truncated cotangent complex. By the work of \cite{OT_1}, there exist a \emph{virtual fundamental class} and a \emph{virtual structure sheaf} on $Z$ satisfying
\begin{align*}
    \iota_*[Z]^{\vir}&=\sqrt{e}(\CE)\cap [\CA]\in A_{\dim \CA- r}\left(\CA, \BZ\left[\tfrac{1}{2}\right]\right),\\
\iota_*\widehat{\oO}_Z^{\vir}&=\sqrt{\mathfrak{e}}(\CE)\otimes K^{1/2}_\CA\in K_0\left(\CA, \BZ\left[\tfrac{1}{2}\right]\right),
\end{align*}
where $K^{1/2}_\CA$ is a square root of the canonical bundle of $\CA$ and $\sqrt{e}, \sqrt{\mathfrak{e}}$ are the Edidin-Graham square root Euler classes in Chow cohomology and $K$-theory (cf. \cite[Sec.~3.1, 5.1]{OT_1}). We call the integer $\dim \CA-r$ the \emph{virtual dimension} of $Z$. Exchanging the orientation with its opposite orientation results in changing the sign of the induced virtual classes.
\subsection{Quot scheme}
 Let $\overline{r}=(r_1, \dots, r_4)$ and set $r=\sum_{i=1}^4r_i$.  We defined the moduli space of \emph{tetrahedron instantons} as Grothendieck's Quot scheme
\[\CM_{\overline{r}, n}:=\Quot_\Delta(\CE_{\overline{r}},n),\]
where 
 $\Delta=\bigcup_{i=1}^4 \BC^3_i \subset \BC^4$ is  the union of the four coordinate hyperplanes of $\BC^4$ and
 \[\CE_{\overline{r}}=\bigoplus\iota_{i,*}\oO^{\oplus{r_i}}_{\BC^3_i}\]
 is a torsion sheaf on $\Delta$,  where $\iota_i: \BC^3_i\hookrightarrow \Delta$ are the inclusions of the irreducible components. We have natural closed immersions among Quot schemes
 \begin{align}\label{eqn: embedding Quot in QUot C4}
     \CM_{\overline{r}, n}\hookrightarrow \Quot_\Delta(\oO_\Delta^r,n)\hookrightarrow \Quot_{\BC^4}(\oO_{\BC^4}^r,n),
 \end{align}
 obtained by precomposing the quotients $[\CE_{\overline{r}}\onto Q]$ with \[\oO_{\BC^4}^{r}\onto \oO_{\Delta}^{r} \onto \CE_{\overline{r}}.\]
 Denote by $ \Sym^{n}\Delta$ the \emph{$n$-th symmetric power} of $\Delta$. Attached to $\CM_{\overline{r}, n}$ there is the \emph{Quot-to-Chow morphism}
 \[
 \rho: \CM_{\overline{r}, n}\to \Sym^{n}\Delta,
 \]
 which, at the level of closed points, sends a quotient to its 0-dimensional support, counted with multiplicity.
 \begin{example}\label{example: blow up}
     We work out explicitly the example of $\overline{r}=(1,1,0,0)$ and $n=1$. The moduli space of tetrahedron instantons $ \CM_{(1,1,0,0), 1}$ admits a locally closed stratification, obtained by taking the preimages of a locally closed stratification of $\Sym^{1}\Delta\cong \Delta$
     \begin{align*}
         \CM_{(1,1,0,0), 1}&=\rho^{-1}(\BC_1^3\setminus \BC_{12}^2)\sqcup \rho^{-1}(\BC_2^3\setminus \BC_{12}^2) \sqcup \rho^{-1}(\BC_{12}^2)\\
         &= \BC_1^3\setminus \BC_{12}^2\sqcup \BC_2^3\setminus \BC_{12}^2 \sqcup \BC_{12}^2\times \BP^1,
     \end{align*}
     where $\BC^2_{12}:=\BC_1^3\cap \BC_2^3$ is the intersection of the two coordinate hyperplanes and we used that 
     \[
     \rho^{-1}(\BC_{12}^2)\cong \Quot_{\BC^2}(\oO_{\BC^2}^2, 1)\cong \BC_{12}^2\times \BP^1,
     \]
     where the last isomorphism follows by \cite[Rem. 2.2]{MR_lissite}. Furthermore,  there is a better understanding of the geometry of $  \CM_{(1,1,0,0), 1} $ in terms of blow-ups.
  Let $Y=\BC^3_1\cup \BC^3_2\subset \BC^4 $ the union of the first two coordinate hyperplanes, and consider the blow-up of $\BC^4$ centred in $\BC^2_{12}$. Then the moduli space of tetrahedron instantons identifies with the proper transform of $Y$ 
 \[
 \begin{tikzcd}
       \CM_{(1,1,0,0), 1}\cong  p^{-1}(Y)\arrow{d}\arrow[r, hookrightarrow]& Bl_{\BC_{12}^2}\BC^4\arrow[d, "p"]\arrow[r, hookrightarrow]&\BC^4\times \BP^1\cong\Quot_{\BC^4}(\oO_{\BC^4}^2, 1)\arrow[dl]\\
      Y\arrow[r, hookrightarrow] &\BC^4&
  \end{tikzcd}
  \]
  Here, the composition of the first two horizontal maps is the closed embedding \eqref{eqn: embedding Quot in QUot C4}, and shows that the closure of the two strata $\BC_i^3\setminus \BC_{12}^2 $ in $ \CM_{(1,1,0,0), 1} $ are the strict transform of $Y$, while the stratum $  \BC_{12}^2\times \BP^1$ coincides with the exceptional divisor of the blow-up.
 \end{example}
 \begin{remark}
     The situation in Example \ref{example: blow up} readily generalizes to arbitrary $\overline{r}$ and $n$, where a stratification of $  \CM_{\overline{r}, n}$ is induced by a natural stratification of $\Sym^n\Delta $, thus generalizing the stratification described for $n=1$ in \cite[Eqn. (125)]{PYZ_tetrahedron}  via gauge theory. Similarly, we expect in the general case a description of the moduli space of tetrahedron instantons in terms of blow-ups, which  intriguingly hints at an interpretation as a  degeneration phenomenon.
 \end{remark}
 \subsection{Non-commutative Quot scheme}
Consider then the quiver $\LQ$ in Fig.~\ref{fig:4-loop-quiver} with one vertex and four loops, labelled $x_1,x_2,x_3, x_4$. In the following we will be interested in studying representations of the framed quiver $\tLQ$ in Fig.~\ref{fig:tetrahedron-quiver}.
\begin{figure}[ht]\contourlength{2.5pt}
\begin{subfigure}{.45\textwidth}
    \centering
    \begin{tikzpicture}\vspace{-3mm}
        \node(Gk0) at (2.5,0){$0$};
        \draw[<<->>](Gk0) to[out=-35,in=35,looseness=20]node{\contour{white}{$x_a$}} (Gk0);
    \end{tikzpicture}\vspace{-2mm}
    \caption{Four-loop quiver $\LQ$}
    \label{fig:4-loop-quiver}
\end{subfigure}
\begin{subfigure}{.45\textwidth}
    \centering\vspace{-3mm}
    \begin{tikzpicture}
    \node(F123) at (0,1){$\infty_1$};
    \node(F234) at (0,-1){$\infty_4$};
    \node(Gk0) at (2.5,0){$0$};
    \node[] at (0,0.125){$\vdots$};
    \draw[->](F123) to[]node{\contour{white}{$\varphi_1$}} (Gk0);
    \draw[->](F234) to[]node{\contour{white}{$\varphi_4$}} (Gk0);
    \draw[<<->>](Gk0) to[out=-35,in=35,looseness=15]node{\contour{white}{$x_a$}} (Gk0);
    \end{tikzpicture}
    \caption{Framed quiver $\tLQ$}
    \label{fig:tetrahedron-quiver}
\end{subfigure}
\caption{}\label{fig:quivers}
\end{figure}
 In particular, letting $\overline r=(r_1,\dots,r_4)$ be the dimension vector associated to the framing nodes $(\infty_i)_{i=1,\dots,4}$, we will study representations $X\in\Rep_{\BC}(\tLQ)$ with $\dim X=(\overline r,n)$. This amounts to assigning a tuple of vector spaces $(W_1,\dots,W_4,V)$, one to each node of $\tLQ$, in such a way that $\dim W_i=r_i$, $\dim V=n$, and homomorphisms $I_i\in\Hom(W_i,V)$, $B_a\in\End(V)$ according the the arrows of $\tLQ$. Schematically, a representation of $\tLQ$ is shown in Fig.~\ref{fig:tetrahedron-representation}. Studying framed representations of this sort is then equivalent to studying representations of a different framing of $\LQ$, shown pictorially in Fig.~\ref{fig:tetrahedron-representation-equivalent}.
 


\begin{figure}[ht]\contourlength{2.5pt}
\begin{subfigure}{.45\textwidth}
    \centering
    \begin{tikzpicture}
    \node[FrameNode](F123) at (0,1){$r_1$};
    \node[FrameNode](F234) at (0,-1){$r_4$};
    \node[GaugeNode](Gk0) at (2.5,0){$n$};
    \node[] at (0,0.125){$\vdots$};
    \draw[->](F123) to[]node{\contour{white}{$I_1$}} (Gk0);
    \draw[->](F234) to[]node{\contour{white}{$I_4$}} (Gk0);
    \draw[<<->>](Gk0) to[out=-35,in=35,looseness=10]node{\contour{white}{$B_a$}} (Gk0);
    \end{tikzpicture}
    \caption{Representation of $\tLQ$}
    \label{fig:tetrahedron-representation}
\end{subfigure}
\begin{subfigure}{.45\textwidth}
    \centering
    \begin{tikzpicture}
    \node[FrameNode](F0) at (0,0){$1$};
    \node[GaugeNode](Gk0) at (2.5,0){$n$};
    \node[] at (1.25,0.125){$\vdots$};
    \draw[->](F0) to[out=50, in=130]node{\contour{white}{$I_{1}^{(1)}$}} (Gk0);
    \draw[->](F0) to[out=-50, in=230]node{\contour{white}{$I_{r_4}^{(4)}$}} (Gk0);
    \draw[<<->>](Gk0) to[out=-35,in=35,looseness=10]node{\contour{white}{$B_a$}} (Gk0);
    \end{tikzpicture}
    \caption{Equivalent quiver representation}
    \label{fig:tetrahedron-representation-equivalent}
\end{subfigure}
\caption{}\label{fig:representations}
\end{figure}

In the following, we will often use the notation $r=r_1+\cdots+r_4$. Let then $V$ be a given $n$-dimensional vector space, and consider the affine $(4n^2+nr)$-dimensional representation space
\[
\mathsf R_{\overline r,n}\defeq \BC^4\otimes\End(V)\oplus\Hom(\BC,V)^{\oplus r}.
\]
An element $X\in\mathsf R_{\overline r,n}$, together with the choice of $V$, forms a representation as in  Fig.~\ref{fig:tetrahedron-representation-equivalent}. In the following, we will refer to $X\in\mathsf R_{\overline r,n}$ itself as a representation. Also, a representation $X\in\mathsf R_{\overline r,n}$ induces a representation as in Fig.~\ref{fig:tetrahedron-representation} by defining
\[I_i\left(\overbrace{0,\dots,0}^{j-1},1,\overbrace{0,\dots,0}^{r_i-j}\right)=I_i^{(j)}(1),\]
and $W_i=\BC^{r_i}$. Conversely, any representation $(W_1,\dots,W_4,V,B_1,\dots,B_4,I_1,\dots,I_4)$ as in Fig.~\ref{fig:tetrahedron-representation} induces a representation as in Fig.~\ref{fig:tetrahedron-representation-equivalent} by choosing a basis $\{e^{(i)}_j\}\subset W_i$ and setting
\[I_i^{(j)}(1)=I_i\left(e_j^{(i)}\right).\]

The group $\GL(V)$ acts naturally on the representation space by simultaneously conjugating the endomorphisms and scaling the vectors. Consider then then the open subscheme $\mathsf U_{\overline r,n}\subset\mathsf R_{\overline r,n}$ parametrising those quiver representations $(B_1,\dots,B_4,I_1^{(1)},\dots,I_4^{(r_4)})$ such that the morphisms $(I_i^{(j)})_{i,j}$ \textit{jointly generate} $V\cong\BC^n$ as a $\BC\langle B_1,B_2,B_3,B_4\rangle$-module, that is
\begin{equation}\label{eqn:stability-vectors-generate-V}
    \bigoplus_{i=1}^4\bigoplus_{j=1}^{r_i}\BC\langle B_1,B_2,B_3,B_4\rangle I_i^{(j)}(1)\cong V.
\end{equation}
The $\GL(V)$-action is free on $\mathsf U_{\overline r,n}$, so the quotient $\mathcal M^{\nc}_{\overline r,n}\defeq\mathsf U_{\overline r,n}/\GL(V)$ exists as a smooth $(3n^2+nr)$-dimensional quasiprojective variety, cf.~\cite[Lemma~2.1]{Ric_noncomm}. The moduli space of representations thus constructed is known as the \textit{non-commutative Quot scheme}, and parametrises quotients $[\mathbb C\langle x_1,\dots,x_4\rangle^{\oplus r}\onto Q]$  of the free rank $r$ algebra $\mathbb C\langle x_1,\dots,x_4\rangle^{\oplus r}$ on four generators, cf.~\cite[Thm.~2.5]{BR_higher_rank} and \cite[Prop.~2.3]{Ric_noncomm}, and can be furthermore identified with the GIT quotient of $\mathsf R_{\overline r,n} $ with respect to King stability \cite{King_stability}.

 \subsection{Zero-locus construction}\label{sec: zero locus}
We prove in this section Theorem \ref{thm: zero locus intro} from the Introduction. Over the affine space $\mathsf R_{\overline r,n}$,  consider the trivial  vector bundle of rank $6n^2+2rn$

\[
\mathcal L\defeq\left(\Lambda^2\BC^4\otimes\End(V)\right)\oplus\bigoplus_{i=1}^4\left(\Hom(W_i,V)\oplus\Hom(W_i,V)^\ast\right).
\]
We endow $\CL$ with a non-degenerate quadratic pairing $q:\mathcal L^{\otimes 2}\to\oO_{\mathsf R_{\overline r,n}}$, defined fiberwise by
\[
q\left((\phi_1\otimes f_1,\boldsymbol w_1,\boldsymbol\omega_1),(\phi_2\otimes f_2,\boldsymbol w_2,\boldsymbol\omega_2)\right)=\left(\phi_1\wedge\phi_2\right)\otimes\tr(f_1\circ f_2)+\frac{1}{2}\left(\boldsymbol\omega_1(\boldsymbol w_2)+\boldsymbol\omega_2(\boldsymbol w_1)\right),
\]
where  $\phi_i\in\Lambda^2\BC^4$, $f_i\in\End V$, $\boldsymbol w_i\in\bigoplus_{j=1}^4\Hom(W_j,V)$, $\boldsymbol \omega_i\in\bigoplus_{j=1}^4\Hom(W_j,V)^\ast$. The quadratic bundle $(\CL, q)$ restricts to the open locus $\mathsf U_{\overline r,n}\subset \mathsf R_{\overline r,n}$ and is naturally $\GL(V)$-equivariant, with respect to the $\GL(V)$-action on $ U_{\overline r,n}$. Therefore, $(\CL, q)$ descends to a vector bundle with a quadratic pairing on the quotient $\mathcal M^{\nc}_{\overline r,n}$. With a slight abuse of notation, we keep denoting the induced vector bundle by $(\mathcal L,q)$.

 \begin{theorem}\label{thm: isotropic construction}
    Let $\overline{r}=(r_1, \dots, r_4)$ and $n$ be  non-negative integers. There exists an isotropic section $s\in H^0(\CM_{\overline r,n}^{\nc},\CL)$ such that $\CM_{\overline r,n}$ is realised as the zero locus $Z(s)$
    \[
\begin{tikzcd}
& \CL\arrow[d]\\
\CM_{\overline{r}, n}\cong Z(s)\arrow[r, hook, "\iota"] &\CM^{\nc}_{\overline{r}, n}.\arrow[u, bend right, swap, "s"]
\end{tikzcd}
\]
\end{theorem}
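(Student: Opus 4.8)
The plan is to write $s$ down explicitly in terms of the tautological quiver data and then identify its zero locus through the functor of points. Over $\mathsf R_{\overline r,n}$ the universal endomorphisms $B_1,\dots,B_4\in\End(V)$ and framings $I_i\in\Hom(W_i,V)$ are tautologically defined, so I would set
\[
s=\left(\sum_{1\le a<b\le 4}(e_a\wedge e_b)\otimes[B_a,B_b],\ (B_i\circ I_i)_{i=1}^4,\ 0\right)\in\Gamma(\mathsf R_{\overline r,n},\CL),
\]
where the three entries lie in $\Lambda^2\BC^4\otimes\End(V)$, in $\bigoplus_i\Hom(W_i,V)$, and in $\bigoplus_i\Hom(W_i,V)^\ast$ respectively, the last being identically zero. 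Under $g\in\GL(V)$ one has $B_a\mapsto gB_ag^{-1}$ and $I_i\mapsto gI_i$, whence $[B_a,B_b]\mapsto g[B_a,B_b]g^{-1}$ and $B_iI_i\mapsto g(B_iI_i)$; thus $s$ is $\GL(V)$-equivariant, so after restriction to $\mathsf U_{\overline r,n}$ it descends to a section of $\CL$ on $\CM^{\nc}_{\overline r,n}$.

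Next I would verify isotropy. By the definition of $q$ there are no cross terms, so $q(s,s)$ splits as a $\Lambda^2\BC^4\otimes\End(V)$ contribution plus a framing contribution. The framing contribution is $\tfrac12(\boldsymbol\omega(\boldsymbol w)+\boldsymbol\omega(\boldsymbol w))$ with $\boldsymbol\omega=0$, hence vanishes. Writing $C_{ab}=[B_a,B_b]$ and using $e_a\wedge e_b\wedge e_c\wedge e_d\in\Lambda^4\BC^4\cong\BC$, the first contribution is proportional to
\[
\tr(C_{12}C_{34})-\tr(C_{13}C_{24})+\tr(C_{14}C_{23}),
\]
and a direct expansion shows that, after grouping the six cyclic words in $B_1,B_2,B_3,B_4$, every coefficient cancels by cyclicity of the trace. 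Hence $q(s,s)=0$ identically and $s$ is isotropic; this trace identity is the one genuinely computational point of the construction.

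Finally I would identify $Z(s)$. Since the dual-framing entry of $s$ is identically zero, scheme-theoretically $Z(s)$ is cut out inside $\CM^{\nc}_{\overline r,n}$ by the equations $[B_a,B_b]=0$ and $B_iI_i=0$. The relations $[B_a,B_b]=0$ force the $\BC\langle B_1,\dots,B_4\rangle$-action on $V$ to factor through $\BC[x_1,\dots,x_4]$, so that by \cite[Thm.~2.5]{BR_higher_rank} and \cite[Prop.~2.3]{Ric_noncomm} this locus is exactly the commutative Quot scheme $\Quot_{\BC^4}(\oO^r,n)$ sitting inside its non-commutative counterpart. The remaining relations $B_iI_i=0$ say that the $i$-th block of framing generators is annihilated by $x_i$, i.e.\ that the universal surjection $\oO_{\BC^4}^r\onto Q$ factors through $\oO_{\BC^4}^r\onto\CE_{\overline r}$; via the closed immersion \eqref{eqn: embedding Quot in QUot C4} this is precisely the condition defining $\Quot_\Delta(\CE_{\overline r},n)$. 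Matching these conditions against the universal quotient produces a natural transformation of functors of points, which I expect to be an isomorphism. The main obstacle is exactly this last point: upgrading the evident bijection on closed points to a \emph{scheme-theoretic} isomorphism $Z(s)\cong\CM_{\overline r,n}$, which requires checking that the ideal generated by the entries of $s$ coincides with the ideal defining the Quot functor and, correspondingly, that the two universal families agree after flat pullback.
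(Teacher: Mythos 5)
Your proposal is correct and follows essentially the same route as the paper: the same explicit section (your $\sum_{a<b}(e_a\wedge e_b)\otimes[B_a,B_b]$ is the paper's $\sum_{a,b}(e_a\wedge e_b)\otimes B_aB_b$), the same $\GL(V)$-equivariance and descent, and the same identification of $Z(s)$ via the relations $[B_a,B_b]=0$ and $B_iI_i=0$ forcing the quotient to factor through $\CE_{\overline{r}}$. The only differences are cosmetic: you spell out the trace identity behind $q(s,s)=0$, which the paper simply declares ``clearly isotropic,'' and both arguments dispose of the scheme-theoretic upgrade in the same way, by noting that the pointwise argument carries over to flat families.
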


\begin{proof}
We start by constructing an isotropic section of $(\CL,q)$ on the affine space $R_{\overline r,n}$.  Let $X=(B_1, \dots, B_4, I_1, \dots, I_4)\in R_{\overline r,n}$ be a quiver representation and  let $(e_i)_{i=1, \dots, 4}$ be a basis of $\BC^4$. Define the section 
\begin{align*}
    s(X)=\sum_{a,b=1}^4(e_a\wedge e_b)\otimes B_aB_b+\sum_{i=1}^4 B_iI_i.
\end{align*}
In particular, the restriction of the section to the last summand of $\CL$ vanishes
\begin{align*}
    s|_{\bigoplus_{i=1}^4\Hom(W_i,V)^\ast}=0.
\end{align*}
The section $s$ is clearly isotropic with respect to the quadratic pairing $q$, i.e. $q(s,s)=0$. 
Since the restriction of the section to the open locus $ \mathsf U_{\overline r,n}$ is $\GL(V)$-equivariant, it descends to an isotropic section on the quotient $ \CM^{\nc}_{\overline r,n}$, which  we keep denoting by  $s\in H^0(\CM^{\nc}_{\overline r,n},\CL)$.

There is a natural closed embedding
\begin{align*}
    \iota:\CM_{\overline r,n}\hookrightarrow \CM^{\nc}_{\overline r,n},
\end{align*}
which is obtained by the closed embedding \eqref{eqn: embedding Quot in QUot C4}, followed by the closed embedding of the Quot scheme $\Quot_{\BC^4}(\oO_{\BC^4}^r,n)$ into its non-commutative counterpart, defined via their moduli functors. We claim that
\[\CM_{\overline r,n}\cong Z(s).\]
Let $[\alpha: \CE_{\overline{r}}\onto Q]\in  \CM_{\overline r,n}$ be a closed point. Recall that the embedding into the non-commutative Quot scheme identifies this quotient with the quiver representation $(B_1, \dots, B_4, I_1, \dots, I_4)\in  \CM^{\nc}_{\overline r,n}$, where 
\begin{align*}
    &B_i:Q\to Q, \quad i=1, \dots, 4,\\
    &I^{(j)}_i=\alpha(0,\dots, 0, 1, 0, \dots, 0)\in Q, \quad i=1, \dots, 4,\, j=1, \dots, r_i.
\end{align*}
Via this correspondence, the matrices $B_i$ are the module multiplication by $x_i$, and the cyclic vectors $I^{(j)}_i$ are the images of the constant 1, each coming from a summand of $\CE_{\overline{r}}$. 

By construction, if $(B_1, \dots, B_4, I_1, \dots, I_4)\in  \CM^{\nc}_{\overline r,n} $ is a closed point in the image of $\iota$, then it satisfies the relations
\begin{align}\label{eqn: relations}
\begin{split}
    &B_aB_b-B_bB_a=0, \quad a,b=1, \dots, 4,\\
&B_iI_i=0, \quad i=1, \dots, 4.
\end{split}
\end{align}
On the other hand, consider a quotient  $[\mathbb C\langle x_1,\dots,x_4\rangle^{\oplus r}\onto Q] \in \CM^{\nc}_{\overline r,n}$ satisfying the relations \eqref{eqn: relations}. Then the quotient factors through
\[
\begin{tikzcd}
    \mathbb C\langle x_1,\dots,x_4\rangle^{\oplus r} \arrow[r,two heads]\arrow[d,two heads]& Q\\
  \CE_{\overline{r}} \arrow[ur, two heads]&   
\end{tikzcd}
\]
thus defining a closed point in $\CM_{\overline r,n}$, as desired.
We conclude by noticing that  the same argument carries over flat families, which yields the desired isomorphism.
\end{proof}
 \subsection{The maximal isotropic subbundle}
 We endow the vector bundle $\CL$ with a structure of special orthogonal bundle, by showing it admits a maximal isotropic subbundle. Consider the subbundles
 \begin{align*}
     \tilde{\Lambda}&:=\left(\langle e_4\rangle\wedge \BC^3\right)\otimes \End(V)\subset \left(\Lambda^2\BC^4\otimes\End(V)\right)\\
     \Lambda&=\tilde{\Lambda}\oplus \bigoplus_{i=1}^4\Hom(W_i,V)^*\subset \CL,
 \end{align*}
 on the affine space $\mathsf R_{\overline r,n}$. Then $\Lambda$ is a maximal isotropic subbundle. In fact, $\Lambda$ has rank $3n^2+ rn$ and the restriction of the quadratic pairing vanishes.  The maximal isotropic subbundles $\tilde{\Lambda},\Lambda$ descend to  maximal isotropic subbundles on the non-commutative Quot scheme $\CM^{\nc}_{\overline r,n} $, which we keep denoting by $\tilde{\Lambda}, \Lambda$.
 \begin{remark}
        The non-commutative Quot scheme $\CM^{\nc}_{\overline r,n} $ and a summand of its special orthogonal bundle $\CL$  (with its maximal isotropic subbundle $\tilde{\Lambda}$) appear also in the work of  Kool-Rennemo \cite{KR_magnificient}, who exploit them to study Donaldson-Thomas theory of $\BC^4$.
    \end{remark}

Since $\CL$ is a special orthogonal bundle,  Theorem \ref{thm: isotropic construction} immediately implies the following corollary.
\begin{corollary}\label{cor: virtual classes}
   The moduli space of tetrahedron instantons  $\CM_{\overline{r}, n}$ is endowed with a virtual fundamental class and a virtual structure sheaf of virtual dimension zero
   \begin{align*}
    [\CM_{\overline{r}, n}]^{\vir}&\in A_{0}\left(\CM_{\overline{r}, n}, \BZ\left[\tfrac{1}{2}\right]\right),\\
\widehat{\oO}^{\vir}_{\CM_{\overline{r}, n}}&\in K_0\left(\CM_{\overline{r}, n}, \BZ\left[\tfrac{1}{2}\right]\right).
\end{align*}
\end{corollary}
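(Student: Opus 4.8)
The plan is to feed the zero-locus presentation of Theorem \ref{thm: isotropic construction} directly into the Oh-Thomas machinery recalled at the start of Section \ref{sec: virtual classes}. Essentially all of the work has already been carried out in the preceding constructions: what remains is to check that every hypothesis of the toy model is in place and then to perform the virtual dimension count. Accordingly I expect this to be a short assembly rather than an argument with a genuine obstacle.

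First I would record that $\CA := \CM^{\nc}_{\overline{r}, n}$ is a smooth quasi-projective variety of dimension $3n^2 + nr$, as established in the construction of the non-commutative Quot scheme. Next I would confirm that $\CL$ is a bona fide special orthogonal bundle in the required sense: it carries the non-degenerate quadratic pairing $q$ by construction, and it is equipped with an orientation. That orientation is exactly what the maximal isotropic subbundle $\Lambda$ supplies—taking determinants of the short exact sequence $0 \to \Lambda \to \CL \to \Lambda^* \to 0$ trivialises $\det \CL$ compatibly with $q$, as recalled in Section \ref{sec: virtual classes}. Finally, Theorem \ref{thm: isotropic construction} furnishes the isotropic section $s$ together with the isomorphism $\CM_{\overline{r}, n} \cong Z(s)$, so all the inputs of the toy model are present.

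With these ingredients the Oh-Thomas construction produces the induced obstruction theory \eqref{eqn: obs th} on $Z(s)$, and hence both the virtual fundamental class $[\CM_{\overline{r}, n}]^{\vir}$ in Chow and the virtual structure sheaf $\widehat{\oO}^{\vir}_{\CM_{\overline{r}, n}}$ in $K$-theory, each well defined over $\BZ[\tfrac{1}{2}]$. It then only remains to compute the virtual dimension. Since $\rk \CL = 6n^2 + 2rn$, the relevant half-rank is $r' = 3n^2 + rn$, and therefore
\[
\vdim \CM_{\overline{r}, n} = \dim \CA - r' = (3n^2 + nr) - (3n^2 + rn) = 0,
\]
placing both classes in the degree-zero groups $A_0$ and $K_0$ as claimed. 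There is no real obstruction here: the only substantive facts used are the smoothness and dimension of $\CM^{\nc}_{\overline{r}, n}$, the special-orthogonal structure on $\CL$, and the zero-locus isomorphism, all already available. The cancellation to virtual dimension zero is not a coincidence but a structural feature—the maximal isotropic subbundle $\Lambda$ has rank precisely half that of $\CL$, and this half-rank matches $\dim \CA$ exactly, which is the reason the theory is of virtual dimension zero.
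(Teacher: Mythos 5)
Your proposal is correct and follows exactly the route the paper takes: the paper establishes that $\CL$ is a special orthogonal bundle via the maximal isotropic subbundle $\Lambda$ and then states that Theorem \ref{thm: isotropic construction} together with the Oh--Thomas toy model of Section \ref{sec: virtual classes} ``immediately implies'' the corollary. Your write-up simply makes explicit the dimension count $\dim\CM^{\nc}_{\overline{r},n}-\tfrac{1}{2}\rk\CL=(3n^2+nr)-(3n^2+rn)=0$ that the paper leaves implicit, so there is nothing to add.
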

    We will use these virtual classes to define invariants of $\CM_{\overline{r}, n}$ which match the ones studied by Pomoni-Yan-Zhang \cite{PYZ_tetrahedron} in string theory.

 \section{The partition function}
\subsection{Torus representations and their weights}
Let $\TT = (\BC^*)^g$ be an algebraic torus, with character lattice $\widehat{\TT} = \Hom(\TT,\BC^\ast) \cong \BZ^g$. Let $K^0_{\TT}(\pt)$ be the Grothendieck group of the category of $\TT$-representations. Any finite-dimensional $\TT$-representation $V$ splits as a sum of $1$-dimensional representations called the \emph{weights} of $V$. Each weight corresponds to a character $\mu \in \widehat{\TT}$, and in turn each character corresponds to a monomial $\tf^\mu = \tf_1^{\mu_1}\cdots \tf_g^{\mu_g}$ in the coordinates of $\TT$. In other words, there is an isomorphism 
\begin{equation*}
K^0_{\TT}(\pt) \cong \BZ \left[\tf^\mu \mid \mu \in \widehat{\TT}\right],
\end{equation*}
identifying the class of a $\TT$-representation with  its decomposition into weight spaces. We will therefore sometimes identify a (virtual) $\TT$-representation with its character. With a slight abuse of notation, we still denote by $\TT$ any finite cover of the algebraic torus $ \TT$ where the square roots $t^{\frac{\mu}{2}}$ of some irreducible characters $t^\mu$ are well-defined. This will be needed as most of our constructions need naturally  to extract square roots.
\subsection{Torus action}
Let $\overline{r}=(r_1, \dots, r_4)$ and $r=\sum_{i=0}^4 r_i$ and define the algebraic tori
\[
\TT=\TT_0\times \TT_1, \quad \TT_0=\{t_1t_2t_3t_4=1\}\subset (\BC^*)^4, \quad \TT_1=(\BC^*)^{r}.
\]
The torus $\TT_0$ acts on $\BC^4$ by
\[(t_1,\dots, t_4)\cdot (x_1, \dots, x_4)=(t_1x_1,\dots, t_4 x_4)\]
 preserving the Calabi-Yau volume form. In particular, it restricts to the union of the coordinate hyperplanes $\Delta\subset \BC^4$ and naturally lifts to the moduli space of tetrahedron instantons $\CM_{\overline{r}, n}$, by moving the support of the quotients. The torus $\TT_1$  acts on $\CM_{\overline{r}, n}$ by scaling the fibers of $\CE_{\overline{r}}$. Therefore, we have an induced $\TT$-action on $\CM_{\overline{r}, n}$. Equivalently, the  $\TT$-action   on $ \CM_{\overline{r}, n}$ is the restriction of the  $\TT$-action on the moduli space of non-commutative Quot scheme $ \CM^{\mathrm{nc}}_{\overline{r}, n}$ given on points by
 \[(t, w)\cdot (B_1, \dots, B_4,  I^{(1)}_1, \dots, I_4^{(r_4)})=(t_1^{-1} B_1, \dots, t_4^{-1} B_4, w_{11}^{-1}I_1^{(1)}, \dots, w_{4r_4}^{-1}I^{(r_4)}_{4}), \]
 where $w_{ij}$ are the coordinates of $\TT_1$, for $i=1, \dots, 4$ and $j=1\dots, r_i$.
 
\begin{remark}\label{lemma:compact_fixed_locus_Quot}
The fixed locus $\CM_{\overline{r}, n}^{\TT_0}$ is proper. Indeed, a $\TT_0$-invariant surjection $\CE_{\overline{r}}\onto Q$ necessarily has the quotient $Q$  entirely supported at the origin $0\in \Delta$. Hence
\[
\CM_{\overline{r}, n}^{\TT_0} \into \rho^{-1}(n\cdot[0])
\]
sits  as a closed subscheme inside the fiber over $n\cdot [0]$ of the Quot-to-Chow morphism $ \rho:\CM_{\overline{r}, n}\to \Sym^n\Delta$.
 But $\rho^{-1}(n\cdot[0])$ is proper, since $\rho$ is a proper morphism.
\end{remark}
 
The torus fixed locus of the moduli space of tetrahedron instantons is classified in terms of \emph{plane partitions}.
\begin{definition}
    A \emph{plane partition} $\pi$ is a finite subset of $\BZ^3_{\geq 0}$ such that if $(i,j,k)\in \pi$ with $(i,j,k)\neq 0$, then $(i-1, j,k), (i,j-1,k), (i,j,k-1)\in \pi$, and we denote by $|\pi|$ its \emph{size}. Alternatively, we can view the plane partition  as boxes stacked in $\BR^3$, where the lattice points are determined by the \emph{interior} corners of the boxes, the corners closest to the origin. For a tuple of plane partitions $\overline{\pi}=(\pi_1, \dots, \pi_r)$ we denote its \emph{size} by $|\overline{\pi}|=\sum_{i=1}^r |\pi_i|$.
\end{definition}
It is well-known that plane partitions are in bijection with monomial ideals of $\BC[x,y,z]$
\[\pi\subset \BZ^3_{\geq 0} \longleftrightarrow I_{\pi}=(x^iy^jz^k)_{(i,j,k)\notin \pi}\subset \BC[x,y,z],\]
where the lattice points of $\BZ^3_{\geq 0}$ correspond to monomials in three variables (see e.g. \cite[Sec.~3.4]{FMR_higher_rank}).
    \begin{prop}\label{prop: fixed locus reduced}
    Let $\overline{r}=(r_1,\dots, r_4)$. The $\TT$-fixed locus $\mathcal{M}_{\overline{r}, n}^\TT$ is reduced, zero-dimensional and in bijection with
    \[\left\{\overline{\pi}=(\overline{\pi}_1, \dots, \overline{\pi}_4): \overline{\pi}_i=(\pi_{il})_{1\leq l\leq r_4}, \mbox{$\pi_{il}$ plane partitions such that } |\overline{\pi}|=n    \right\}.\]
    \end{prop}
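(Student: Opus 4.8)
The plan is to treat the two halves of the statement separately: first identify the $\TT$-fixed points set-theoretically as tuples of monomial ideals, and then prove reducedness and zero-dimensionality by showing that the $\TT$-invariant part of the Zariski tangent space vanishes at every fixed point.

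For the classification, I would start from the observation that a $\TT$-fixed point is a quotient $[\CE_{\overline r}\onto Q]$ whose kernel $K\subset\CE_{\overline r}$ is a $\TT$-invariant subsheaf. First I would exploit $\TT_1$-invariance: since $\TT_1=(\BC^*)^r$ scales the $r$ summands $\iota_{i,*}\oO_{\BC^3_i}$ of $\CE_{\overline r}$ by pairwise distinct characters, any $\TT_1$-invariant subsheaf must respect the direct-sum decomposition, so $K=\bigoplus_{i,l}\tilde I_{il}$ with $\tilde I_{il}=\iota_{i,*}I_{il}$ and $I_{il}\subset\oO_{\BC^3_i}$ a finite-colength ideal; correspondingly $Q=\bigoplus_{i,l}\iota_{i,*}(\oO_{\BC^3_i}/I_{il})$. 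Next I would use $\TT_0$-invariance: on each component $\BC^3_i=Z(x_i)$ the torus $\TT_0=\{t_1t_2t_3t_4=1\}$ acts on the three remaining coordinates through an isomorphic copy of $(\BC^*)^3$, so the monomials $\prod_{j\neq i}x_j^{a_j}$ occupy pairwise distinct weight spaces. Hence each $I_{il}$ is $\TT_0$-invariant precisely when it is a monomial ideal, and finite-colength monomial ideals in three variables are exactly plane partitions $\pi_{il}$. Counting dimensions, $\sum_{i,l}|\pi_{il}|=\dim_\BC Q=n$, which is the asserted bijection.

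For reducedness and zero-dimensionality, I would use that at a fixed point $[q]$ with kernel $K$ the Zariski tangent space to $\CM_{\overline r,n}$ is $\Hom_\Delta(K,Q)$, so the tangent space to the fixed locus is the invariant part $\Hom_\Delta(K,Q)^\TT$; it suffices to prove this vanishes, since then each of the finitely many fixed points is reduced and isolated. Using the decompositions above I would write $\Hom_\Delta(K,Q)=\bigoplus_{(i,l),(i',l')}\Hom_\Delta(\tilde I_{il},Q_{i'l'})$, where each summand is a single $\TT_1$-eigenspace of pure character $w_{i'l'}w_{il}^{-1}$; this is trivial only for $(i,l)=(i',l')$, so every off-diagonal term — both those mixing components $i\neq i'$ and those mixing multiplicities $l\neq l'$ — contributes nothing to the invariant part. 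This is exactly the point where the framing torus $\TT_1$ is indispensable (for $r_i>1$ the Calabi–Yau torus $\TT_0$ alone would not separate the multiplicities). It then remains to treat each diagonal term $\Hom_\Delta(\tilde I_{il},Q_{il})^{\TT_0}$: since both sheaves are annihilated by $x_i$ and pushed forward from $\BC^3_i$ (recall $\iota_{i,*}\oO_{\BC^3_i}=\oO_\Delta/(x_i)$), any $\oO_\Delta$-linear map between them is automatically $\oO_{\BC^3_i}$-linear, so this Hom coincides with $\Hom_{\oO_{\BC^3_i}}(I_{il},\oO_{\BC^3_i}/I_{il})$, the Zariski tangent space of $\Hilb^{|\pi_{il}|}(\BC^3_i)$ at the monomial ideal $\pi_{il}$.

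The main obstacle is this last single-component vanishing, which is the heart of the reducedness claim. It amounts to the classical fact that the $\Hilb(\BC^3)$ vertex character carries no trivial summand — equivalently, that the torus-fixed points of $\Hilb^n(\BC^3)$ are reduced and isolated — for which I would invoke the explicit character recalled in \cite[Sec.~3.4]{FMR_higher_rank} and \cite{MNOP_1}. The one extra verification needed is that this \emph{full three-torus} computation transfers to the three-dimensional Calabi–Yau torus $\TT_0$: because $\TT_0$ acts with pairwise distinct weights on the monomials of each $\BC^3_i$ (the weight lattice embeds injectively), a character has no $\TT_0$-trivial summand as soon as it has none over $(\BC^*)^3$. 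Granting this, $\Hom_\Delta(\tilde I_{il},Q_{il})^{\TT_0}=0$ for every $(i,l)$, hence $\Hom_\Delta(K,Q)^\TT=0$, and the fixed locus is reduced and zero-dimensional. The decomposition steps are formal once the $\TT_1$-weights are recorded, but some care is required because they are carried out for coherent sheaves on the singular reducible scheme $\Delta$ rather than on a single smooth $\BC^3$.
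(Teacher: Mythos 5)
Your proof is correct, and its first half (the set-theoretic classification of fixed points via the $\TT_1$-weight decomposition of the kernel and the monomiality forced by $\TT_0$) is the same as the paper's. Where you diverge is in how reducedness and zero-dimensionality are established. The paper invokes Bifet's result to get a \emph{scheme-theoretic} isomorphism
\[
\mathcal{M}_{\overline{r}, n}^{\TT_1}\cong \bigsqcup_{n=\sum_{i,j}n_{ij}}\prod_{i=1}^4\prod_{j=1}^{r_i}\Quot_{\Delta}(\iota_{i, *}\oO_{\BC^3_i},n_{ij}),
\]
identifies each factor with $\Hilb^{n_{ij}}(\BC^3_i)$, and then quotes the classical reducedness of the $(\BC^*)^3$-fixed locus of the Hilbert scheme of points. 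You instead argue pointwise: you identify the Zariski tangent space of the Quot scheme at a fixed quotient with $\Hom_\Delta(K,Q)$, kill the off-diagonal blocks by their nontrivial $\TT_1$-characters, and reduce the diagonal blocks to $\Hom_{\oO_{\BC^3_i}}(I_{il},\oO_{\BC^3_i}/I_{il})^{(\BC^*)^3}=0$ via full faithfulness of $\iota_{i,*}$. Both routes bottom out in the same classical vanishing for $\Hilb^n(\BC^3)$ at monomial ideals, and your observation that the Calabi--Yau torus $\TT_0$ restricts to the \emph{full} three-torus on each hyperplane $\BC^3_i$ is exactly the point that makes the transfer legitimate (the paper states this identification without comment). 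Your version is more self-contained, since it replaces the citation of Bifet by a direct weight computation, at the cost of proving only what is needed at each fixed point rather than the global product structure of $\CM_{\overline{r},n}^{\TT_1}$, which the paper's decomposition gives for free and which echoes the factorisation of the partition function later on.
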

    \begin{proof}
        By  the same arguments as in \cite{Bifet} its $\TT_1$-fixed locus is scheme-theoretically isomorphic to 
        \[
\mathcal{M}_{\overline{r}, n}^{\TT_1}\cong \bigsqcup_{n=\sum_{i=1}^4\sum_{j=1}^{r_i}n_{ij}}\prod_{i=1}^4\prod_{j=1}^{r_i}\Quot_{\Delta}(\iota_{i, *}\oO_{\BC^3_i},n_{ij}).
        \]
    Moreover, we have an identification of each factor with a Hilbert scheme of points \[\Quot_{\Delta}(\iota_{i, *}\oO_{\BC^3_i},n_{ij})\cong \Hilb^{n_{ij}}(\BC_i^3), \] such that  the natural $(\BC^*)^3$-action on $\BC^3_i$ lifts on $\Hilb^{n_{ij}}(\BC_i^3)$ and coincides with the induced $\TT_0$-action on the left-hand-side.
The $ (\BC^*)^3$-fixed locus $  \Hilb^{n_{ij}}(\BC_i^3)^{(\BC^*)^3}$ is reduced and zero-dimensional, which implies that the same holds for 
 $\mathcal{M}_{\overline{r}, n}^\TT$ and that every $\TT$-fixed point in $ \mathcal{M}_{\overline{r}, n}^\TT$ corresponds to a quotient
        \[\bigoplus_{i=1}^4\bigoplus_{j=1}^{r_i}I_{ij}\hookrightarrow \bigoplus_{i=1}^4 \iota_{i,*}\oO_{\BC^3_i}^{r_i}\onto \bigoplus_{i=1}^4\bigoplus_{j=1}^{r_i}\oO_{Z_{ij}},\]
        where each $Z_{ij}\subset \BC^3_{i}$ is a closed zero-dimensional subscheme with monomial ideal sheaf $I_{ij}$, each of which corresponds to a plane partition. Reversing the correspondence concludes the argument.
    \end{proof}
     Define the \emph{MacMahon series}
\begin{align}\label{eqn: MacMahon}
    \mathrm{M}(q)=\prod_{n\geq 1}\frac{1}{(1-q^n)^n}.
\end{align}
    \begin{corollary}
    Let $\overline{r}=(r_1, \dots, r_4)$ and $r=\sum_{i=1}^4r_i$.    The generating series of topological Euler characteristics is
        \[
        \sum_{n\geq 0}e(\mathcal{M}_{\overline{r}, n})     \cdot q^n =   \mathrm{M}(q)^r.
        \]
    \end{corollary}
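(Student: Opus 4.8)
The plan is to compute the generating series of topological Euler characteristics by exploiting the $\TT$-action and the Białynicki-Birula/localisation principle that the Euler characteristic is computed by the fixed locus. First I would recall the standard fact that for a variety $X$ acted on by a torus $\TT$, one has $e(X) = e(X^{\TT})$, so that it suffices to understand the fixed locus. However, rather than using the full torus, the cleaner route exploits the motivic/Euler-characteristic identity attached to the torus action together with the stratification already implicit in Proposition \ref{prop: fixed locus reduced}.

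More precisely, I would proceed as follows. By Proposition \ref{prop: fixed locus reduced} (or rather its proof), the $\TT_1$-fixed locus decomposes as
\[
\CM_{\overline{r}, n}^{\TT_1}\cong \bigsqcup_{n=\sum_{i,j}n_{ij}}\prod_{i=1}^4\prod_{j=1}^{r_i}\Hilb^{n_{ij}}(\BC^3_i),
\]
and since $\BC^3_i\cong\BC^3$ for each $i$, every factor is a copy of $\Hilb^{n_{ij}}(\BC^3)$. The key input I would invoke is that Euler characteristic is invariant under the torus action, hence $e(\CM_{\overline{r}, n}) = e(\CM_{\overline{r}, n}^{\TT_1})$ (one only needs the action of a one-parameter subgroup with the same fixed locus, which exists here). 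Then, using the multiplicativity of Euler characteristic on products and its additivity on disjoint unions, I obtain
\[
\sum_{n\geq 0}e(\CM_{\overline{r}, n})\, q^n = \prod_{i=1}^4\prod_{j=1}^{r_i}\left(\sum_{m\geq 0}e(\Hilb^{m}(\BC^3))\,q^{m}\right).
\]

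The final step is to recognise each factor. It is a celebrated result, going back to the computation of the Euler characteristics of Hilbert schemes of points on a threefold (via the $(\BC^*)^3$-action, whose fixed points are the monomial ideals, i.e.\ plane partitions, counted by the MacMahon function), that
\[
\sum_{m\geq 0}e(\Hilb^{m}(\BC^3))\,q^{m}=\mathrm{M}(q)=\prod_{n\geq 1}\frac{1}{(1-q^n)^n},
\]
since the $(\BC^*)^3$-fixed points of $\Hilb^m(\BC^3)$ are exactly the size-$m$ plane partitions and these are enumerated by the MacMahon series. Combining this with the product over the $r=\sum_i r_i$ factors gives the claimed formula $\mathrm{M}(q)^r$.

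The main obstacle, though a minor one, is justifying the reduction $e(\CM_{\overline{r}, n})=e(\CM_{\overline{r}, n}^{\TT_1})$ and then $e(\CM^{\TT_1}_{\overline{r},n})=e((\CM^{\TT_1}_{\overline{r},n})^{(\BC^*)^3})$ cleanly: one must ensure the torus action satisfies the hypotheses under which the Euler characteristic localises to the fixed locus (which holds for quasi-projective varieties with a $\BC^*$-action, and $\CM_{\overline{r},n}$ is quasi-projective). The enumerative heart, namely identifying the plane-partition count with $\mathrm{M}(q)$, is already packaged in Proposition \ref{prop: fixed locus reduced} and the classical bijection between monomial ideals and plane partitions recalled in the excerpt, so no new combinatorics is needed beyond MacMahon's formula.
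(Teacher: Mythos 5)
Your proposal is correct and follows essentially the same route as the paper: both reduce the topological Euler characteristic to the torus-fixed locus of Proposition \ref{prop: fixed locus reduced} and then count tuples of plane partitions via the MacMahon series. The only cosmetic difference is that you pass through the $\TT_1$-fixed locus as a product of Hilbert schemes $\Hilb^{m}(\BC^3)$ and invoke the classical generating function $\sum_m e(\Hilb^m(\BC^3))q^m=\mathrm{M}(q)$, whereas the paper uses the full $\TT$-fixed locus directly as a finite reduced set of points indexed by $r$-tuples of plane partitions.
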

    \begin{proof}
        The Euler characteristic of a scheme coincides with the one of its $\TT$-fixed locus, therefore the result follows by Proposition \ref{prop: fixed locus reduced} combined with the classical fact that
        \[
        \sum_{\pi}q^{|\pi|}= \mathrm{M}(q),
        \]
        where the sum is over all plane partitions.
    \end{proof}
\subsection{Invariants}\label{sec:invariants}
For any  $\overline{r}=(r_1,\dots, r_4)$ and $n\in \BZ_{\geq 0}$, denote by $t_1,\dots, t_4$ the irreducible characters of the $\TT_0$-action on $\mathcal{M}_{\overline{r}, n}$ and by $w_{ij}$ the irreducible characters of the $\TT_1$-action, where $i=1,\dots, 4$ and $j=1,\dots r_i$. Since all ingredients in Theorem \ref{thm: isotropic construction} are naturally $\TT$-equivariant and in particular the $\TT$-action preserves the quadratic pairing\footnote{The requirement to restrict to the Calabi-Yau condition $t_1t_2t_3t_4=1$ is needed precisely to preserve the quadratic pairing equivariantly.} on $\CL$, the virtual structure sheaf $ \widehat{\oO}^{\vir}_{\mathcal{M}_{\overline{r}, n}}$ naturally lifts to a $\TT$-equivariant class in equivariant $K$-theory. We define the $\TT$-equivariant $K$-theoretic invariants
\begin{align}\label{eqn: invariants}
\CZ_{\overline{r},n}=\chi\left(\mathcal{M}_{\overline{r}, n}, \widehat{\oO}^{\vir}_{\mathcal{M}_{\overline{r}, n}}\right)\in \frac{\BQ(t_1^{ \frac{1}{2}}, t_2^{\frac{1}{2}}, t_3^{ \frac{1}{2}}, t_4^{ \frac{1}{2}}, w^{ \frac{1}{2}})}{(t_1t_2t_3t_4-1)}.
\end{align}
Since $\CM_{\overline{r}, n}$ is not proper, one cannot directly define $K$-theoretic invariants via proper push-forward in $K$-theory. Instead,  since the fixed locus $\CM_{\overline{r}, n}^\TT $ is proper, we define invariants as the composition
\begin{align}\label{def: inv by loc def}
  \chi\left(\mathcal{M}_{\overline{r}, n}, \cdot\right):  K^{\TT}_0\left(\CM_{\overline{r}, n}, \BZ\left[\tfrac{1}{2}\right]\right)\to K^{\TT}_0(\CM_{\overline{r}, n})_{\mathrm{loc}}\xrightarrow{\sim} K^{\TT}_0(\CM^{\TT}_{\overline{r}, n})_{\mathrm{loc}}\to K_0^\TT(\pt)_{\mathrm{loc}}.
\end{align}
Here, the first map is a suitable localisation of $ K^{\TT}_0\left(\CM_{\overline{r}, n}, \BZ\left[\tfrac{1}{2}\right]\right)$, the second map is  Thomason's abstract localisation \cite{Tho:formule_Lefschetz} and the third map is the proper pushforward on  $\CM_{\overline{r}, n}^\TT $ (extended to the localisation).

We define the \emph{tetrahedron instanton partition function} as the generating series
\begin{align*}
    \CZ_{\overline{r}}(q)=\sum_{n\geq 0} \CZ_{\overline{r},n}\cdot q^n \in \frac{\BQ(t_1^{ \frac{1}{2}}, t_2^{ \frac{1}{2}}, t_3^{ \frac{1}{2}}, t_4^{ \frac{1}{2}}, w^{ \frac{1}{2}})}{(t_1t_2t_3t_4-1)}[\![q]\!].
\end{align*}
Let $T_{\CM_{\overline{r}, n}}^{\vir}$ be the \emph{virtual tangent bundle} of $\mathcal{M}_{\overline{r}, n}$, that is  the dual of the obstruction theory $\BE$ in \eqref{eqn: obs th}
\begin{align*}
    T_{\CM_{\overline{r}, n}}^{\vir}=[{T_{\CA}}|_{\mathcal{M}_{\overline{r}, n}}\to \CL|_{\mathcal{M}_{\overline{r}, n}}\to {T^*_{\CA}}|_{\mathcal{M}_{\overline{r}, n}}],
\end{align*}
and denote by $N^{\vir}$ the \emph{virtual normal bundle}, which is  the $\TT$-movable part of $T_{\CM_{\overline{r}, n}}^{\vir}$. We will denote by $ T_{\CM_{\overline{r}, n}}^{\vir},  N^{\vir}$ also their classes in $K$-theory, whenever it is  clear from the context.

By Oh-Thomas localisation theorem in K-theory  \cite{OT_1} we can compute the invariants via $\TT$-equivariant residues on the fixed locus
\begin{align*}\label{eqn: loc K}
    \chi\left(\mathcal{M}_{\overline{r}, n}, \widehat{\oO}^{\vir}_{\mathcal{M}_{\overline{r}, n}}\right)=\chi\left(\mathcal{M}_{\overline{r}, n}^\TT,\frac{ \widehat{\oO}^{\vir}_{\mathcal{M}_{\overline{r}, n}^\TT}}{\sqrt{\mathfrak{e}}(N^{\vir}) } \right),
\end{align*}
where $\mathfrak{e}$ is the ($\TT$-equivariant) $K$-theoretic Euler class, which is defined as follows. Let  $\CE$  be a $\TT$-equivariant locally free sheaf on $\CM_{\overline{r}, n}$. We define
\[\mathfrak{e}(\CE):= \Lambda^\bullet \CE^*=\sum_{i\geq 0} (-1)^i \Lambda^i \CE^*\in K^0_{\TT}(\CM_{\overline{r}, n}),\]
and extend it  by linearity to any class in $ K^0_\TT(\CM_{\overline{r}, n})$. Finally, $ \sqrt{\mathfrak{e}}(\cdot)$ is the ($\TT$-equivariant) $K$-theoretic square-root Euler  class; the complete description and construction of this class is in \cite[Sec.~5.1]{OT_1}. We remark that in the proof  of the localisation theorem,  Oh-Thomas \cite{OT_1}  prove  that  the fixed locus $ \mathcal{M}_{\overline{r}, n}^\TT$ is naturally endowed with an \emph{orientation} (in the sense of \cite{CGJ_orientability}) -- and therefore a virtual structure sheaf -- and that the virtual normal bundle $N^{\vir}$ is a self-dual complex (with a natural induced orientation), which implies that its square-root Euler class is well-defined (cf.~\cite[Eqn. (112)]{OT_1}). By Proposition \ref{prop: fixed locus reduced} the fixed locus $\mathcal{M}_{\overline{r}, n}^\TT$  consists of  finitely many reduced  points labelled by tuples of plane partitions. Thus, if we let $Z\in \mathcal{M}_{\overline{r}, n}^\TT$ be a fixed point, the induced virtual fundamental class  satisfies $[Z]^{\vir}=\pm [Z]$, for a well-determined sign. Therefore the tetrahedron partition functions is given by
\begin{align*}
     \CZ_{\overline{r}}(q)=\sum_{\overline{\pi}}(-1)^{o_{\overline{\pi}}}\sqrt{\mathfrak{e}}(-T^{\vir}_{\overline{\pi}})\cdot q^{|\overline{\pi}|},
\end{align*}
where the sum runs over tuples $ \overline{\pi}=(\overline{\pi}_1, \dots, \overline{\pi}_4)$, where each $\overline{\pi}_i$ is an $r_i$-tuple of plane partitions,  $T^{\vir}_{\overline{\pi}}$ is the virtual tangent bundle at the fixed point corresponding to $\overline{\pi}$ via the correspondence in Proposition \ref{prop: fixed locus reduced} and $(-1)^{o_{\overline{\pi}}}$ is the sign of the induced virtual fundamental class (with respect to the standard fundamental class). We used the fact that the virtual tangent bundle is $\TT$-movable by Proposition \ref{prop: t movable}.

The definition of the square-root Euler  class is a priori non-explicit and depends  on the chosen orientation of  $ \mathcal{M}_{\overline{r}, n}$. To ease our life, we define the \emph{square root} of a $\TT$-representation.
\begin{definition}[{\cite[Def. 1.2]{Mon_canonical_vertex}}]\label{def: square root}
Let $V\in K^0_\TT(\pt)$ be a virtual $\TT$-representation. We say that $T\in K^0_\TT(\pt)$ is a \emph{square root} of $V$ if
\[
V=T+\overline{T}\in K_\TT^0(\pt),
\]
where $\overline{(\cdot)}$ denotes the dual $\TT$-representation.
\end{definition}
 Let $ V$ be a $\TT$-representation with a   square root  $T$ in $K^0_\TT(\pt)$. Its $K$-theoretic square root Euler class satisfies
\begin{align}\label{eqn: square root euler class}
    \sqrt{\mathfrak{e}}(V)&=\pm \mathfrak{e}(T)\otimes ({\det}T)^{\frac{1}{2}} \in K^0_\TT\left(\pt, \BZ\left[\tfrac{1}{2}\right]\right),
\end{align}
for a well-defined (but non-explicit) sign.
For an irreducible $\TT$-representation $t^\mu$,  define
\[
[t^\mu]=t^{\frac{\mu}{2}}-t^{-\frac{\mu}{2}}\in  K^0_\TT(\pt)
\]
and extend it by linearity to any  $T\in K_\TT^0(\pt)$ by setting $[t^{\mu}\pm t^{\nu}]=[t^{\mu}][t^{\nu}]^{\pm 1}$, as long as $\nu$ is not the trivial weight. It is shown in \cite[Sec.~6.1]{FMR_higher_rank} that 
\begin{align*}
   \mathfrak{e}(T)\otimes ({\det}T)^{\tfrac{1}{2}}=[T], 
\end{align*}
for any virtual $\TT$-representation $T\in K_\TT^0(\pt)$. Therefore we can express \eqref{eqn: square root euler class} explicitly as
\begin{align}\label{eqn: square roots with brackets 2}
     \sqrt{\mathfrak{e}}(V)= \pm [T].
\end{align}
We derive in the next section a vertex formalism that computes the partition function of tetrahedron instantons, by choosing suitable square roots of the virtual tangent bundle and the associated signs.
\subsection{Vertex formalism}
Let $r=(r_1, \dots r_4)$ and $\overline{\pi}=(\overline{\pi}_1, \dots, \overline{\pi}_4)$ correspond to a $\TT$-fixed quotient   $[\CE_{\overline{r}}\onto Q_{\overline{\pi}}]\in \CM_{\overline{r}, n}^\TT$, where each $ \overline{\pi}_i=(\pi_{i1}, \dots, \pi_{ir_i})$ is a tuple of plane partitions. We denote by $T^{\vir}_{\overline{\pi}}$ the virtual tangent space at the fixed point $\overline{\pi}$. Recall that $Q_{\overline{\pi}}$ decomposes as 
\[Q_{\overline{\pi}}=\bigoplus_{i=1}^4\bigoplus_{j=1}^{r_i}\oO_{Z_{{\pi_{ij}}}}, \]
where each $Z_{\pi_{ij}}\subset\BC^3_i$ is a zero-dimensional closed subscheme defined by a monomial ideal corresponding to a plane partition $\pi_{ij}$. We keep denoting their  global sections by $Z_{\pi_{ij}}$ (resp. $Q_{\overline{\pi}}$)  which  are given, as a $\TT_0$ (resp. $\TT$)-representation, by
\begin{align}\label{eqn: decomposition of Q}
\begin{split}
    Z_{\pi_{ij}}&=\sum_{(a,b,c)\in \pi_{ij}}t_{i_1}^at_{i_2}^bt_{i_3}^c,   \\
Q_{\overline{\pi}_i}&=\sum_{j=1}^{r_i}w_{ij}Z_{\pi_{ij}}\\
Q_{\overline{\pi}}&=\sum_{i=1}^4Q_{\overline{\pi}_i},
\end{split}
\end{align}
where $\{i_1, i_2, i_3\}$ are the three indices in $\{1,2,3,4\}$ different from $i$. We denote by $\overline{(\cdot)}$ the involution on $K^0_\TT(\pt)$ which acts as $\overline{t^\mu}=t^{-\mu}$ on irreducible representations and  by $P_I=\prod_{l\in I}(1-t_l)$, for any set of indices $I$.
For $i=1,\dots, 4$ set the $\TT_1$-representations
\begin{align*}
    K_i&=\sum_{j=1}^{r_i}w_{ij},\\
    K&=\sum_{i=1}^4 K_i.
\end{align*}
\begin{prop}.
    We have an identification of virtual  $\TT$-representations
    \begin{align*}        T^{\vir}_{\overline{\pi}}= \overline{K}Q_{\overline{\pi}}+K\overline{Q_{\overline{\pi}}}-P_{1234}Q_{\overline{\pi}}\overline{Q_{\overline{\pi}}}-\sum_{i=1}^4K_it_i \overline{Q_{\overline{\pi}}}-\sum_{i=1}^4\overline{K_i}t^{-1}_i Q_{\overline{\pi}}.
    \end{align*}
\end{prop}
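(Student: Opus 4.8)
The plan is to compute, as a class in $K^0_\TT(\pt)$, the three terms of the virtual tangent complex at the fixed point and then assemble them. Recall from \eqref{eqn: obs th} and the definition of $T^{\vir}_{\CM_{\overline r,n}}$ that, writing $\CA=\CM^{\nc}_{\overline r,n}$ for the smooth ambient space and using that $\CL\cong\CL^*$ via the quadratic pairing $q$, one has at the fixed point $\overline\pi$
\[
T^{\vir}_{\overline\pi}=T_\CA\big|_{\overline\pi}-\CL\big|_{\overline\pi}+T^*_\CA\big|_{\overline\pi}.
\]
Thus the task reduces to identifying the tangent space of $\CA$ and the fibre of $\CL$ as $\TT$-representations at $\overline\pi$, for which the crucial input is the weight decomposition $V=Q_{\overline\pi}$ and $W_i=K_i$ coming from Proposition \ref{prop: fixed locus reduced} together with \eqref{eqn: decomposition of Q}.

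First I would compute $T_\CA\big|_{\overline\pi}$. Since $\GL(V)$ acts freely on $\mathsf U_{\overline r,n}$, the smooth quotient $\CA=\mathsf U_{\overline r,n}/\GL(V)$ has tangent space given in $K$-theory by $\mathsf R_{\overline r,n}-\End(V)$, the infinitesimal $\GL(V)$-action being injective. To read off the $\TT$-action one combines the explicit torus action with the element $g(t,w)\in\GL(V)$ linearising the fixed point; a direct check shows this is exactly the cocharacter making $V\cong Q_{\overline\pi}$ and $I_i^{(j)}(1)$ a weight vector of weight $w_{ij}$, so that each $B_i$ is genuinely the weight-$t_i$ multiplication operator. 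Consequently the coordinate $B_i$ contributes $t_i^{-1}\End(V)=t_i^{-1}Q_{\overline\pi}\overline{Q_{\overline\pi}}$, the framing coordinates contribute $\bigoplus_i\Hom(W_i,V)=\overline K\,Q_{\overline\pi}$, and $\End(V)=Q_{\overline\pi}\overline{Q_{\overline\pi}}$, whence
\[
T_\CA\big|_{\overline\pi}=\Big(\textstyle\sum_{i=1}^4 t_i^{-1}\Big)Q_{\overline\pi}\overline{Q_{\overline\pi}}+\overline K\,Q_{\overline\pi}-Q_{\overline\pi}\overline{Q_{\overline\pi}},
\]
and $T^*_\CA\big|_{\overline\pi}$ is its dual, obtained by applying $\overline{(\cdot)}$.

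Next I would determine $\CL\big|_{\overline\pi}$. Its $\TT$-equivariant structure is pinned down by requiring the section $s(X)=\sum_{a,b}(e_a\wedge e_b)\otimes B_aB_b+\sum_i B_iI_i$ to be $\TT$-invariant. Matching the weights of the component $\sum_{a<b}(e_a\wedge e_b)\otimes[B_a,B_b]$ forces $e_a$ to carry weight $t_a$, so that $\Lambda^2\BC^4\otimes\End(V)=\big(\sum_{a<b}t_at_b\big)Q_{\overline\pi}\overline{Q_{\overline\pi}}$; matching the weights of the component $\sum_i B_iI_i$ forces the summand $\Hom(W_i,V)$ to be twisted by $t_i^{-1}$ and its dual by $t_i$, yielding $\sum_i\overline{K_i}\,t_i^{-1}Q_{\overline\pi}+\sum_i K_it_i\overline{Q_{\overline\pi}}$. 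Collecting terms,
\[
\CL\big|_{\overline\pi}=\Big(\textstyle\sum_{a<b}t_at_b\Big)Q_{\overline\pi}\overline{Q_{\overline\pi}}+\textstyle\sum_i K_it_i\overline{Q_{\overline\pi}}+\textstyle\sum_i\overline{K_i}\,t_i^{-1}Q_{\overline\pi}.
\]

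Finally, substituting into $T^{\vir}_{\overline\pi}=T_\CA-\CL+T^*_\CA$, the framing contributions collapse to $\overline K\,Q_{\overline\pi}+K\overline{Q_{\overline\pi}}-\sum_i K_it_i\overline{Q_{\overline\pi}}-\sum_i\overline{K_i}\,t_i^{-1}Q_{\overline\pi}$, while the coefficient of $Q_{\overline\pi}\overline{Q_{\overline\pi}}$ becomes $\sum_i t_i+\sum_i t_i^{-1}-2-\sum_{a<b}t_at_b$. Using the Calabi--Yau relation $t_1t_2t_3t_4=1$ one expands $P_{1234}=\prod_{l=1}^4(1-t_l)=2-\sum_l t_l-\sum_l t_l^{-1}+\sum_{a<b}t_at_b$ (here $\sum_{a<b<c}t_at_bt_c=\sum_l t_l^{-1}$ since $\prod_{m\neq l}t_m=t_l^{-1}$), so that the above coefficient equals $-P_{1234}$, and the claimed identity follows. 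I expect the only genuinely delicate step to be the determination of the equivariant structure of $\CL$ --- equivalently, the weight $t_a$ on $e_a$ and the twists $t_i^{\mp 1}$ on the framing summands --- which is dictated by the equivariance of $s$ and is precisely where the constraint $t_1t_2t_3t_4=1$ is used.
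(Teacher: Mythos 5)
Your computation is correct and follows essentially the same route as the paper: identify $T^{\vir}_{\overline\pi}=T_{\CM^{\nc}_{\overline r,n}}|_{\overline\pi}-\CL|_{\overline\pi}+T^*_{\CM^{\nc}_{\overline r,n}}|_{\overline\pi}$, compute the tangent space of the non-commutative Quot scheme as $\mathsf R_{\overline r,n}-\End(V)$ and the fibre of $\CL$ from its defining summands, then assemble using $t_1t_2t_3t_4=1$. The only cosmetic difference is that you record the weights of $\Lambda^2\BC^4$ as $\sum_{a<b}t_at_b$ where the paper uses $\sum_{a<b}t_a^{-1}t_b^{-1}$; these coincide under the Calabi--Yau relation, so the two computations agree.
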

\begin{proof}
    The virtual tangent space at a $\TT$-fixed point corresponding to a tuple of plane partitions $\overline{\pi}$ is computed as a class in $K$-theory by
    \begin{align}\label{eqn: virtual complex}
T^{\vir}_{\overline{\pi}}=T_{\CM^{\mathrm{nc}}_{\overline{r}, n} }|_{\overline{\pi}}-\CL|_{\overline{\pi}}+T^*_{\CM^{\mathrm{nc}}_{\overline{r}, n} }|_{\overline{\pi}}\in K_\TT^0(\pt).
    \end{align}
    The tangent space $T_{\CM^{\mathrm{nc}}_{\overline{r}, n} }|_{\overline{\pi}} $ at the fixed quotient corresponding to  $\overline{\pi}\in \CM^{\mathrm{nc}}_{\overline{r}, n} $ sits in an exact sequence of $\TT$-representations
    \begin{align*}
        0\to \Hom(Q_{\overline{\pi}}, Q_{\overline{\pi}})\to \BC^4\otimes \Hom(Q_{\overline{\pi}},Q_{\overline{\pi}})\oplus \bigoplus_{i=1}^4\bigoplus_{j=1}^{r_i}\Hom(\BC \cdot w_{ij}, Q_{\overline{\pi}})\to T_{\CM^{\mathrm{nc}}_{\overline{r}, n} }|_{\overline{\pi}}\to 0,
    \end{align*}
    where the middle representation  is the tangent space of the affine space $\mathsf R_{\overline r,n} $ and the first representation records the free action of $\GL(\BC^n)$. The weight decomposition of $\BC^4$ is
    \[\BC^4=t_1^{-1}+t_2^{-1}+t_3^{-1}+t_4^{-1},\]
   by which one readily computes in $K$-theory
    \[
    T_{\CM^{\mathrm{nc}}_{\overline{r}, n} }|_{\overline{\pi}}=(t_1^{-1}+t_2^{-1}+t_3^{-1}+t_4^{-1}-1)Q_{\overline{\pi}}\overline{Q_{\overline{\pi}}}+ \overline{K}Q_{\overline{\pi}}.
    \]
   The vector bundle $\CL$ at the point $\overline{\pi}$ satisfies
   \[
\CL|_{\overline{\pi}}=\Lambda^2\BC^4\otimes \Hom(Q_{\overline{\pi}}, Q_{\overline{\pi}})\oplus\bigoplus_{i=1}^4\Hom(K_i\cdot t_i, Q_{\overline{\pi}})\oplus \bigoplus_{i=1}^4\Hom(K_i\cdot t_i, Q_{\overline{\pi}})^*,
   \]
   where the weight decomposition of $ \Lambda^2\BC^4$ is 
   \[
   \Lambda^2\BC^4=t_1^{-1}t_2^{-1}+ t_1^{-1}t_3^{-1}+t_1^{-1}t_4^{-1}+t_2^{-1}t_3^{-1}+t_2^{-1}t_4^{-1}+t_3^{-1}t_4^{-1},
   \]
   which implies that 
   \[
   \CL|_{\overline{\pi}}=(t_1^{-1}t_2^{-1}+ t_1^{-1}t_3^{-1}+t_1^{-1}t_4^{-1}+t_2^{-1}t_3^{-1}+t_2^{-1}t_4^{-1}+t_3^{-1}t_4^{-1})Q_{\overline{\pi}}\overline{Q_{\overline{\pi}}}+\sum_{i=1}^4K_it_i \overline{Q_{\overline{\pi}}}+\sum_{i=1}^4\overline{K_i}t^{-1}_i Q_{\overline{\pi}}.
   \]
   Plugging the computations in \eqref{eqn: virtual complex} and exploiting the relation $t_1t_2t_3t_4=1$ concludes the proof.
\end{proof}
We exhibit a square root of the virtual tangent bundle at each fixed point. Let $\overline{\pi}$ be a tuple of plane partitions. Set the \emph{vertex term} 
\begin{align}\label{eqn: vertex term}
    \mathsf{v}_{\overline{\pi}}=\overline{K}Q_{\overline{\pi}}-\sum_{j=1}^4K_j t_j \overline{Q_{\overline{\pi}}}-\sum_{ j=1}^4 \overline{P_{j_1j_2j_3}}Q_{\overline{\pi}_j}\overline{Q_{\overline{\pi}_j}}-\sum_{1\leq i<j\leq 4} \overline{P_{j_1j_2j_3}}\left(Q_{\overline{\pi}_j}\overline{Q_{\overline{\pi}_i}}+Q_{\overline{\pi}_i}\overline{Q_{\overline{\pi}_j}}\right),
\end{align}
 where   $\{j_1, j_2, j_3\}$ are the three indices in $\{1,2,3,4\}$ different from $j$. It is easily seen to satisfy
\begin{align*}
T^{\vir}_{\overline{\pi}}=\mathsf{v}_{\overline{\pi}}+\overline{\mathsf{v}_{\overline{\pi}}},
\end{align*}
thanks to the identity
\[
P_{j_1j_2j_3}+\overline{P_{j_1j_2j_3}}=P_{1234},
\]
which exploits the Calabi-Yau condition $t_1t_2t_3t_4=1$. 
Thus, by \eqref{eqn: square roots with brackets 2} we can compute the partition function of tetrahedron instantons as
\begin{align*}
 \CZ_{\overline{r}}(q)=\sum_{\overline{\pi}}(-1)^{\sigma_{\overline{\pi}}}[-\mathsf{v}_{\overline{\pi}}]\cdot q^{|\overline{\pi}|},
\end{align*}
where the sign $(-1)^{\sigma_{\overline{\pi}}}$ receives a contribution both from $ (-1)^{{o_{\overline{\pi}}}}$ (coming from the orientation of the fixed locus) and from the choice of the square root of the virtual tangent bundle.

We postpone to Appendix \ref{sec: app} the proof of the correct sign rule associated with our choice of square root (cf. Corollary \ref{cor: final sign}).
\begin{theorem}\label{thm: correct sign body text}
    We have that
    \[
    (-1)^{\sigma_{\overline{\pi}}}=1.
    \]
\end{theorem}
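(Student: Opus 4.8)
The plan is to show that, for our particular square root $\mathsf{v}_{\overline{\pi}}$ in \eqref{eqn: vertex term}, the sign $(-1)^{\sigma_{\overline{\pi}}}$ — which packages both the orientation sign $(-1)^{o_{\overline{\pi}}}$ of the fixed locus and the ambiguity in \eqref{eqn: square roots with brackets 2} — collapses to $+1$ at every fixed point. The idea is to compare $\mathsf{v}_{\overline{\pi}}$ against the \emph{reference} square root of $T^{\vir}_{\overline{\pi}}$ canonically attached to the maximal isotropic subbundle $\Lambda$, since $\Lambda$ is exactly the datum that fixes the orientation in the Oh-Thomas formalism. Restricting the tautological sequences to a fixed point and using the fibrewise descriptions of $T_{\CM^{\nc}_{\overline r,n}}$ and of $\Lambda=\tilde{\Lambda}\oplus\bigoplus_i\Hom(W_i,V)^\ast$ computed above, I would set
\[
\mathsf{v}^{\Lambda}_{\overline{\pi}}\defeq T_{\CM^{\nc}_{\overline r,n}}|_{\overline{\pi}}-\Lambda|_{\overline{\pi}}\in K^0_\TT(\pt).
\]
Because $\Lambda$ is maximal isotropic we have $\CL|_{\overline{\pi}}=\Lambda|_{\overline{\pi}}+\overline{\Lambda|_{\overline{\pi}}}$, whence $T^{\vir}_{\overline{\pi}}=\mathsf{v}^{\Lambda}_{\overline{\pi}}+\overline{\mathsf{v}^{\Lambda}_{\overline{\pi}}}$; a direct check (using the Calabi-Yau relation $t_1t_2t_3t_4=1$, exactly as in the identity $P_{j_1j_2j_3}+\overline{P_{j_1j_2j_3}}=P_{1234}$) confirms that $\mathsf{v}^{\Lambda}_{\overline{\pi}}$ is indeed a square root of $T^{\vir}_{\overline{\pi}}$.

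The structural result of Kool-Rennemo \cite{KR_magnificient} enters precisely at this point: it identifies the orientation on the $\TT$-fixed locus induced by $\tilde{\Lambda}$ (the summand they also use for $\BC^4$) and pins down the sign carried by the square-root Euler class evaluated against $\mathsf{v}^{\Lambda}_{\overline{\pi}}$, giving $(-1)^{o_{\overline{\pi}}}\sqrt{\mathfrak e}(-T^{\vir}_{\overline{\pi}})=[-\mathsf{v}^{\Lambda}_{\overline{\pi}}]$. Granting this, the theorem reduces to a purely combinatorial parity statement. Indeed $\mathsf{v}_{\overline{\pi}}$ and $\mathsf{v}^{\Lambda}_{\overline{\pi}}$ are two square roots of the same self-dual class, so their difference is anti-self-dual; since their linear-in-$Q_{\overline{\pi}}$ parts $\overline{K}Q_{\overline{\pi}}-\sum_j K_j t_j\overline{Q_{\overline{\pi}}}$ coincide, we may write
\[
\mathsf{v}_{\overline{\pi}}-\mathsf{v}^{\Lambda}_{\overline{\pi}}=D-\overline{D},
\]
with $D$ built entirely from the quadratic terms $Q_{\overline{\pi}_i}\overline{Q_{\overline{\pi}_j}}$ weighted by the $P$-polynomials. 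Multiplicativity of $[\cdot]$ together with $[\overline{A}]=(-1)^{\rk A}[A]$ then yields
\[
(-1)^{\sigma_{\overline{\pi}}}=\bigl[\,\mathsf{v}_{\overline{\pi}}-\mathsf{v}^{\Lambda}_{\overline{\pi}}\,\bigr]=\bigl[\,D-\overline{D}\,\bigr]=(-1)^{\rk D},
\]
so everything hinges on proving that $\rk D$ is even.

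To establish this I would split $D$ into its \emph{diagonal} blocks, one per component, where $-\overline{P_{j_1j_2j_3}}Q_{\overline{\pi}_j}\overline{Q_{\overline{\pi}_j}}$ is precisely the three-dimensional vertex of $\BC^3_j$ whose parity is governed by the well-understood sign-free behaviour of the $\BC^3$ theory, and its \emph{off-diagonal} blocks $-\overline{P_{j_1j_2j_3}}\bigl(Q_{\overline{\pi}_j}\overline{Q_{\overline{\pi}_i}}+Q_{\overline{\pi}_i}\overline{Q_{\overline{\pi}_j}}\bigr)$ for $i<j$. Each block is then attacked by exhibiting a weight-by-weight involution on the boxes of the plane partitions that cancels weights in conjugate pairs, leaving an even residual count. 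The main obstacle I anticipate is the off-diagonal contributions: unlike the diagonal blocks, these couple boxes lying in two distinct coordinate hyperplanes (whose monomials involve overlapping but different triples of the $t_i$), so the naive box-pairing is not manifestly parity-preserving and the even count must be extracted from a careful analysis of the $\TT_0$-weights after imposing $t_1t_2t_3t_4=1$. This combinatorial core — together with the Kool-Rennemo structural input — is exactly what is carried out in Appendix \ref{sec: app} to produce Corollary \ref{cor: final sign}, and hence the claimed equality $(-1)^{\sigma_{\overline{\pi}}}=1$.
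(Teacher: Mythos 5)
Your overall architecture — take $\mathsf{v}^{\Lambda}_{\overline{\pi}}=T_{\CM^{\nc}_{\overline r,n}}|_{\overline{\pi}}-\Lambda|_{\overline{\pi}}$ as the reference square root attached to the orienting maximal isotropic, invoke Kool--Rennemo, and then compare square roots block by block — is indeed the paper's strategy. But your sketch drops the two nontrivial signs whose mutual cancellation is the entire content of the theorem. First, the Kool--Rennemo structural result does \emph{not} give $(-1)^{o_{\overline{\pi}}}\sqrt{\mathfrak e}(-T^{\vir}_{\overline{\pi}})=[-\mathsf{v}^{\Lambda}_{\overline{\pi}}]$: Theorem \ref{thm: KR sign} produces this expression multiplied by $(-1)^{\psi_{\overline{\pi}}}$ with
\[
\psi_{\overline{\pi}}=\dim\left(\coker(T_{\CM^{\nc}_{\overline r,n}}|_{\overline{\pi}}\xrightarrow{(p_\Lambda\circ ds)|_{\overline{\pi}}}\Lambda|_{\overline{\pi}})^{\fix}\right)\bmod 2,
\]
a genuinely nontrivial geometric quantity. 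Its combinatorial evaluation is the hard input (Proposition \ref{prop_ KR explciit sign}), which identifies it — relative to the four-dimensional square root $\tilde{\mathsf{v}}_{\overline{\pi}}=\overline{K}Q_{\overline{\pi}}-\overline{P_{123}}Q_{\overline{\pi}}\overline{Q_{\overline{\pi}}}$ — with the Nekrasov--Piazzalunga sign $\rho_{\overline{\pi}}=\sum_{i,l}|\{(a,a,a,d)\in\pi_{il}:a<d\}|$. By asserting the contribution is $[-\mathsf{v}^{\Lambda}_{\overline{\pi}}]$ on the nose, you have assumed away exactly this input.

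Second, your claimed residual parity $[\mathsf{v}_{\overline{\pi}}-\mathsf{v}^{\Lambda}_{\overline{\pi}}]=+1$ fails on the diagonal blocks. The sign relating the two square roots $\overline{P_{123}}Z_{\pi_{il}}\overline{Z_{\pi_{il}}}$ and $\overline{P_{i_1i_2i_3}}Z_{\pi_{il}}\overline{Z_{\pi_{il}}}$ of the diagonal quadratic term is precisely $(-1)^{\rho_{\pi_{il}}}$ — this is \cite[Thm.~2.8]{Mon_canonical_vertex} specialised to a plane partition sitting inside $\BZ^4_{\geq 0}$ with $i$-th coordinate zero — and $\rho_{\pi_{il}}$ counts the boxes $(0,0,0,d)$ with $d>0$, which is nonzero whenever the relevant leg of $\pi_{il}$ is nonempty (for $i\in\{1,2,3\}$). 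So no box-pairing involution makes the diagonal residue even; rather, the diagonal residue equals $(-1)^{\rho_{\overline{\pi}}}$ and cancels against the $(-1)^{\psi_{\overline{\pi}}}$ you omitted. Your proposal thus reaches the right answer only because it discards two canceling nontrivial signs, and proving that they cancel — via Lemma \ref{lemma: appendix}, Proposition \ref{prop_ KR explciit sign}, and the canonical vertex theorem — is what the appendix actually does. Your treatment of the off-diagonal blocks (writing the discrepancy as $U-\overline{U}$ and checking $\rk U^{\mov}$ is even) is essentially correct and matches \eqref{eqn: appendix rewritten}, though note the parity there is $\rk D^{\mov}$, not $\rk D$.
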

Thanks to Theorem \ref{thm: correct sign body text}, we can explicitly express the partition function as
\begin{align}\label{eqn: partition function with vertex terms}
 \CZ_{\overline{r}}(q)=\sum_{\overline{\pi}}[-\mathsf{v}_{\overline{\pi}}]\cdot q^{|\overline{\pi}|}.
\end{align}
where the sum runs over tuples $ \overline{\pi}=(\overline{\pi}_1, \dots, \overline{\pi}_4)$, where each $\overline{\pi}_i$ is an $r_i$-tuple of plane partitions.
\subsection{Factorisation}

Let $\overline{r}=(r_1, \dots, r_4)$ and $ \overline{\pi}=(\overline{\pi}_1, \dots, \overline{\pi}_4)$, where each $\overline{\pi}_i=(\pi_{il})_{il}$ is an $r_i$-tuple of plane partitions, for $i=1,\dots, 4$, $l=1, \dots, r_i$. The vertex term can be written as a sum
\begin{align*}
\mathsf{v}_{\overline{\pi}}&=\sum_{1\leq i\leq j\leq 4}\mathsf{v}^{(ij)}_{\overline{\pi}},\\
\mathsf{v}^{(ij)}_{\overline{\pi}}&=\begin{cases}
\overline{K}_j Q_{\overline{\pi}_j}-K_jt_j\overline{ Q_{\overline{\pi}_j}}-\overline{P_{j_1j_2j_3}}Q_{\overline{\pi}_j}\overline{ Q_{\overline{\pi}_j}} & i=j,\\
 \overline{K}_i Q_{\overline{\pi}_j}-K_jt_j\overline{ Q_{\overline{\pi}_i}}+ \overline{K}_j Q_{\overline{\pi}_i}-K_it_i\overline{ Q_{\overline{\pi}_j}}-\overline{P_{j_1j_2j_3}}(Q_{\overline{\pi}_j}\overline{ Q_{\overline{\pi}_i}}+Q_{\overline{\pi}_i}\overline{ Q_{\overline{\pi}_j}})& i<j.
\end{cases}
\end{align*}
If $i=j$, the vertex term $\mathsf{v}^{(ii)}_{\overline{\pi}}$ recovers  the vertex term of the higher rank Donaldson-Thomas theory of $\BC^3_i$, for the tuple of plane partitions $\overline{\pi}_i$ (cf.~\cite[Sec.~5.1]{FMR_higher_rank}). For this identification, we crucially use that $t_i=(t_{i_1}t_{i_2}t_{i_3})^{-1}$ where  $\{i_1, i_2, i_3\}$ are the three indices in $\{1,2,3,4\}$ different from $i$. To ease the notation, we define  the \emph{Calabi-Yau weight} of the coordinate hyperplane $\BC^3_i$ to be
\[\kappa_i:=t_{i_1}t_{i_2}t_{i_3}=t_i^{-1}.\] 
By \eqref{eqn: decomposition of Q} we can further identify the terms $ \mathsf{v}^{(ij)}_{\overline{\pi}}$ as sums
\begin{align*}
\mathsf{v}^{(ij)}_{\overline{\pi}}&=\sum_{\substack{1\leq l\leq r_i\\ 1\leq k\leq r_j}}\mathsf{v}^{(ij, lk)}_{\overline{\pi}},\\
    \mathsf{v}^{(ij, lk)}_{\overline{\pi}}&=\begin{cases}
     w_{jl}^{-1}w_{jk}\left(Z_{\pi_{jk}}-\kappa_j^{-1}\overline{Z_{\pi_{jl}}}-\overline{P_{j_1j_2j_3}}Z_{\pi_{jk}}\overline{Z_{\pi_{jl}}}\right)& i=j\\
    w_{il}^{-1}w_{jk}\left(Z_{\pi_{jk}}-\kappa^{-1}_j\overline{ Z_{\pi_{il}}}-\overline{P_{j_1j_2j_3}}Z_{\pi_{jk}}\overline{ Z_{\pi_{il}}} \right)+  w_{il}w_{jk}^{-1}\left(Z_{\pi_{il}}-\kappa^{-1}_i\overline{ Z_{\pi_{jk}}}-\overline{P_{j_1j_2j_3}}(Z_{\pi_{il}}\overline{ Z_{\pi_{jk}}})\right)& i< j,
        \end{cases}
\end{align*}
where each $Z_{il}$ are the global sections of a zero-dimensional closed subscheme of $\BC^3_i$, whose monomial ideal corresponds to the plane partition $\pi_{il}$. If $i=j$ and $l=k$, the vertex term $\mathsf{v}^{(ii, ll)}_{\overline{\pi}}$ recovers  the vertex term of the rank one Donaldson-Thomas theory of $\BC^3_i$, for the  plane  partition $\pi_{il}$ (cf.~\cite[Eqn. (12)]{MNOP_1}).

As already anticipated, the vertex term is $\TT$-movable, which in particular implies that the rational function $[-\mathsf{v}_{\overline{\pi}}]$ is well-defined and non-zero.
\begin{prop}\label{prop: t movable}
    The vertex term $\mathsf{v}_{\overline{\pi}}$ is $\TT$-movable.
\end{prop}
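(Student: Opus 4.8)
The plan is to reduce the $\TT$-movability of $\mathsf{v}_{\overline{\pi}}$ to the classical movability of the rank-one MNOP vertex on $\BC^3$, isolating the role of the framing weights. First I would note that a $\TT$-weight $t^\mu w^\nu$ is trivial precisely when $\mu=0$ and $\nu=0$; hence the $\TT$-fixed part of $\mathsf{v}_{\overline{\pi}}$ is contained in its part with trivial framing character. Grouping $\mathsf{v}_{\overline{\pi}}=\sum_\nu A_\nu(t)\,w^\nu$ by the $\TT_1$-character $w^\nu$, where each $A_\nu(t)$ is a virtual $\TT_0$-representation, I see that $\mathsf{v}_{\overline{\pi}}$ is $\TT$-movable if and only if $A_0(t)$, the summand with $w^\nu=1$, is $\TT_0$-movable. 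This is because the coefficient of the trivial $\TT$-character in $\mathsf{v}_{\overline{\pi}}$ equals the coefficient of the trivial $\TT_0$-character in $A_0(t)$.

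Next I would extract $A_0(t)$ from the refined decomposition $\mathsf{v}_{\overline{\pi}}=\sum_{1\leq i\leq j\leq 4}\sum_{l,k}\mathsf{v}^{(ij,lk)}_{\overline{\pi}}$ established above. Each summand carries an explicit framing monomial: for $i=j$ it is $w_{jl}^{-1}w_{jk}$, while for $i<j$ it is a combination of $w_{il}^{-1}w_{jk}$ and $w_{il}w_{jk}^{-1}$. Since the characters $Z_{\pi_{il}}$ are pure $\TT_0$-representations, a summand lies in $A_0(t)$ exactly when its framing monomial is trivial; for $i<j$ this never happens (the indices differ), and for $i=j$ it forces $l=k$. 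Therefore $A_0(t)=\sum_{i=1}^4\sum_{l=1}^{r_i}\mathsf{v}^{(ii,ll)}_{\overline{\pi}}$, the purely diagonal terms, which by the identification recorded above are precisely the rank-one Donaldson–Thomas vertices of the hyperplanes $\BC^3_i$ at the plane partitions $\pi_{il}$.

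It then remains to show each diagonal term $\mathsf{v}^{(ii,ll)}_{\overline{\pi}}=Z_{\pi_{il}}-\kappa_i^{-1}\overline{Z_{\pi_{il}}}-\overline{P_{i_1i_2i_3}}\,Z_{\pi_{il}}\,\overline{Z_{\pi_{il}}}$ is $\TT_0$-movable; summing and using additivity of the trivial-weight coefficient will finish the argument. Here I would invoke that this is exactly the MNOP vertex \cite[Eqn.~(12)]{MNOP_1} in the three variables $t_{i_1},t_{i_2},t_{i_3}$, whose absence of a constant term is classical and reflects that the $(\BC^*)^3$-fixed points of $\Hilb^\bullet(\BC^3)$ are isolated and reduced (cf.~\cite{MNOP_1, FMR_higher_rank}). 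The only genuine subtlety — and the step I expect to require the most care — is the interaction with the Calabi–Yau relation $t_1t_2t_3t_4=1$, since a priori a nontrivial weight could become trivial after imposing it. This is avoided because the three indices $\{i_1,i_2,i_3\}$ complementary to $i$ furnish a free coordinate system on $\TT_0\cong(\BC^*)^3$, the relation serving only to express $t_i=\kappa_i^{-1}$; consequently a monomial $t_{i_1}^a t_{i_2}^b t_{i_3}^c$ is $\TT_0$-trivial if and only if $a=b=c=0$. Thus $\TT_0$-movability of $\mathsf{v}^{(ii,ll)}_{\overline{\pi}}$ coincides with the \emph{unconstrained} movability of the MNOP vertex, and the proof concludes. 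As a consistency check, this matches the geometric expectation, since the vanishing of the fixed part of the virtual tangent space is forced by the fixed locus being reduced and zero-dimensional (Proposition~\ref{prop: fixed locus reduced}).
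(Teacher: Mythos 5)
Your proof is correct and follows essentially the same route as the paper: decompose $\mathsf{v}_{\overline{\pi}}$ into the terms $\mathsf{v}^{(ij,lk)}_{\overline{\pi}}$, observe that only the diagonal terms carry a trivial framing character, and conclude by the classical $\TT_0$-movability of the rank-one MNOP vertex (\cite[Lemma~8]{MNOP_1}). The one point where you go beyond the paper's two-line argument --- checking that the Calabi--Yau relation $t_1t_2t_3t_4=1$ cannot render a nontrivial monomial in $t_{i_1},t_{i_2},t_{i_3}$ trivial, since these are free coordinates on $\TT_0$ --- is a worthwhile detail that the paper leaves implicit, but it does not change the structure of the argument.
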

\begin{proof}
By the decomposition of the vertex term as
\begin{align}\label{eqn: decomposition vertex}
   \mathsf{v}_{\overline{\pi}}=\sum_{1\leq i\leq j\leq 4}\sum_{\substack{1\leq l\leq r_i\\ 1\leq k\leq r_j}}\mathsf{v}^{(ij, lk)}_{\overline{\pi}},
\end{align}
it is clear that the possible contributions to the $\TT$-fixed part have to come from the    \emph{diagonal} terms $\mathsf{v}^{(ii, ll)}_{\overline{\pi}}$, which are $\TT$-movable by \cite[Lemma 8]{MNOP_1} (see also \cite[Prop. 3.14]{FMR_higher_rank}).
\end{proof}
\subsubsection{Rigidity}
Analogously to the arguments of \cite[Sec.~3.1]{Arb_K-theo_surface}, we establish a general criterion which determines the possible poles that can appear in the denominator after  $K$-theoretic localisation. Let $\CM$ be a quasi-projective scheme, acted by an    algebraic torus  $\BT$, such that the fixed locus $\CM^\BT$ is proper.

\begin{definition}[{\cite[Def. 3.1]{Arb_K-theo_surface}}]
  Given a weight $w\in \widehat\BT$, denote by $\BT_w\subset \BT$  the maximal torus inside the subgroup $\ker(w)\subset \BT$. A weight $w\in \widehat{\BT}$ is called  a \emph{compact} weight of $\CM$ if $\CM^{\BT_w}$ is proper, and \emph{non-compact} otherwise.
\end{definition}
Let  $\CF\in K^\BT_0(\CM)$ be a $\BT$-equivariant class in $K$-theory.
As in \eqref{def: inv by loc def}, by Thomason  $K$-theoretic localisation \cite{Tho:formule_Lefschetz}, we have that 
\begin{align*}
    \chi(\CM, \CF)\in \BQ[t_1^{\pm 1}, \dots, t_m^{\pm 1}]\left[\frac{1}{1-w}:w\in \widehat\BT\right],
\end{align*}
where $m=\rk \BT$ is the rank of the torus. In other words, the pushforward $K_0^\BT(\CM^\BT)\to K_0^\BT(\CM) $ becomes an isomorphism after inverting  virtual characters  of the form $1-t^{\mu}$.  We generalize \cite[Prop. 3.2]{Arb_K-theo_surface} -- spelled out in loc. cit. for virtual structures à la Behrend-Fantechi \cite{BF_normal_cone}.
\begin{prop}\label{prop: Noah}
    We have that 
    \[
    \chi(\CM, \CF)=\frac{p(t_1, \dots, t_m)}{\prod_w(1-w)},
    \]
    where $p(t)$ is a Laurent polynomial and  the product is over some non-compact weights $w$ of $\CM$ (with possible multiplicities).
\end{prop}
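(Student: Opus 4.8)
The plan is to reduce the statement to a pole-location analysis, treating one compact weight at a time and invoking the localisation theorem with respect to a \emph{subtorus} rather than the full torus $\BT$. From Thomason's localisation theorem applied to $\BT$ we already know that $\chi(\CM, \CF)$ lies in $R(\BT) = \BQ[t_1^{\pm 1}, \dots, t_m^{\pm 1}]$ with all $1 - t^{\mu}$ inverted; writing this rational function in lowest terms inside $\Frac(R(\BT))$, every irreducible factor of its reduced denominator divides some $1 - t^{\mu}$, hence is a cyclotomic factor supported on a single primitive ray $\BZ w \subset \widehat\BT$. Since $\BT_w = \BT_{kw}$ for all $k$, compactness of a weight depends only on its ray, so it suffices to show: for each primitive compact weight $w$, no factor supported on the ray $\BZ w$ occurs in the reduced denominator. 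As $R(\BT)$ is a unique factorisation domain in which cyclotomic factors attached to distinct primitive rays are non-associate, establishing this for every compact $w$ forces all denominator factors onto non-compact rays, which is the assertion.

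To handle a fixed primitive compact weight $w$, I would apply the localisation theorem to the action of the subtorus $\BT_w \subset \BT$ on $\CM$, while keeping track of the residual $\BT$-action. Because $w$ is compact, the fixed locus $\CM^{\BT_w}$ is proper, and localisation yields
\[
\chi(\CM, \CF) = \chi\!\left(\CM^{\BT_w}, \frac{\CF|_{\CM^{\BT_w}}}{\mathfrak{e}(N)}\right),
\]
where $N$ is the normal bundle of $\CM^{\BT_w}$ in $\CM$, whose $\BT_w$-weights are all nonzero, and $\mathfrak{e}(N) = \lambda_{-1}(N^{\ast})$. The essential point is that this identity already holds in the \emph{partial} localisation of $R(\BT)$ obtained by inverting only those $1 - t^{\mu}$ with $\mu|_{\BT_w} \neq 0$: these are exactly the characters occurring among the $\BT_w$-weights of $N$, and since $\CM^{\BT_w}$ is the fixed locus, $N$ has no $\BT_w$-fixed part, so $\mathfrak{e}(N)$ becomes invertible precisely after inverting them.

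It then remains to read off the denominators on the right-hand side. Since $\CM^{\BT_w}$ is proper, the Euler characteristic of any honest $\BT$-equivariant $K$-theory class on it is a genuine element of $R(\BT)$, i.e.\ a Laurent polynomial; hence every denominator in $\chi(\CM, \CF)$ is contributed by the expansion of $\mathfrak{e}(N)^{-1}$, and is therefore a product of factors $1 - t^{\mu}$ with $\mu|_{\BT_w} \neq 0$, equivalently $\mu \notin \BZ w$. Thus $\chi(\CM, \CF)$ already lies in $R(\BT)[\tfrac{1}{1 - t^{\mu}} : \mu \notin \BZ w]$, so no irreducible factor supported on the ray $\BZ w$ is needed. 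This is exactly the per-ray statement required above, and combining over all compact $w$ as in the first paragraph completes the proof.

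The main obstacle I anticipate is the precise invertibility claim underlying the subtorus localisation in the correct, only partially localised ring: one must verify that $\mathfrak{e}(N)$ is invertible after inverting \emph{solely} the $1 - t^{\mu}$ with $\mu|_{\BT_w}\neq 0$, rather than all of them. This follows by decomposing $N$ into its nonzero $\BT_w$-weight eigen-subbundles $N = \bigoplus_{\chi\neq 0} N_{\chi}$ — a $\BT$-equivariant decomposition, since $\BT$ is abelian and hence preserves $\BT_w$-weight spaces — and checking that each $\mathfrak{e}(N_{\chi})$ is a unit once one inverts the $1 - t^{\mu}$ with $\mu|_{\BT_w} = \chi \neq 0$. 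All remaining steps, namely the properness input and the unique-factorisation bookkeeping, are routine.
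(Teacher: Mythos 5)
Your overall strategy --- reduce to showing that no compact weight can contribute to the reduced denominator, and for each compact $w$ exploit the properness of $\CM^{\BT_w}$ together with localisation for the subtorus $\BT_w$ --- is exactly the strategy of the paper's proof, and your unique-factorisation bookkeeping in the first paragraph is a careful version of what the paper leaves implicit. There is, however, one step that fails as written: the identity $\chi(\CM,\CF)=\chi\bigl(\CM^{\BT_w}, \CF|_{\CM^{\BT_w}}/\mathfrak{e}(N)\bigr)$ with $N$ ``the normal bundle of $\CM^{\BT_w}$ in $\CM$''. The proposition is stated for an arbitrary quasi-projective scheme $\CM$, and in the intended application $\CM=\CM_{\overline{r},n}$ is highly singular (this is precisely why the paper needs Oh--Thomas virtual classes in the first place), so $\CM^{\BT_w}\hookrightarrow\CM$ is not a regular embedding, no normal bundle exists, and the self-intersection formula underlying $\iota_*^{-1}=(\cdot)|_{\CM^{\BT_w}}\otimes\mathfrak{e}(N)^{-1}$ is unavailable. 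In particular the ``main obstacle'' you flag (invertibility of $\mathfrak{e}(N)$ after partial localisation, via a weight-space decomposition of $N$) is aimed at the wrong target, since the bundle whose Euler class you want to invert need not exist.

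The repair is routine and is in substance what the paper does: use Thomason's localisation in its abstract form rather than through an Euler class. The pushforward $K_0^{\BT}(\CM^{\BT_w})\to K_0^{\BT}(\CM)$ becomes an isomorphism after inverting only the classes $1-t^{\mu}$ with $\mu|_{\BT_w}\neq 0$, because every point of $\CM\setminus\CM^{\BT_w}$ has stabiliser whose identity component does not contain $\BT_w$, so the relative $K$-theory dies in this partial localisation; composing $\iota_*^{-1}\CF$ with the integral pushforward $K_0^{\BT}(\CM^{\BT_w})\to \BQ[t_1^{\pm1},\dots,t_m^{\pm1}]$ (which exists by properness, no localisation required) and invoking functoriality to match the paper's definition of $\chi(\CM,\cdot)$ via $\CM^{\BT}$ gives directly that $\chi(\CM,\CF)$ lies in $\BQ[t_1^{\pm1},\dots,t_m^{\pm1}]$ with only the $1-t^{\mu}$, $\mu|_{\BT_w}\neq 0$, inverted --- which is your step 5 without ever mentioning $N$. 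With that substitution your argument is correct and coincides with the paper's, which phrases the same point slightly differently: since $\CM^{\BT_w}$ is proper the invariant can be defined $\BT_w$-equivariantly, its value is the restriction of $p/q$ to $\BT_w$, and well-definedness of that restriction forbids any factor $1-w_0^{n}$ in $q$.
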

\begin{proof}
By definition of $K$-theoretic invariants \eqref{def: inv by loc def}, we have that 
\[ \chi^\BT(\CM,  \CF)=\frac{p(t_1,\dots, t_m)}{q(t_1, \dots, t_m)},\]
where $p(t), q(t)$ are Laurent polynomial and the superscript $\BT$ records that we defined the invariants $\BT$-equivariantly. Let $w$ be a compact weight. Up to replacing $\BT$ with a finite cover, we can assume that $w=(w_0)^e$, where 
$w_0=t_1^{\mu_1}\cdots t_m^{\mu_m}$ is weight such that at least one $|\mu_i|= 1$. Then the subtorus $\BT_w$ coincides with $\ker(w_0)$.
Since the  $\BT_w$-fixed locus $\CM^{\BT_w}$ is proper, we can define the invariants $\BT_w$-equivariantly, and get
\[ \chi^{\BT_w}(\CM,  \CF)=\frac{\tilde{p}(t_1,\dots, t_m)}{\tilde{q}(t_1, \dots, t_m)},\]
where $\tilde{p}(t), \tilde{q}(t)$ are  the Laurent polynomials coming by  the restricting $p(t), q(t)$ as $\BT$-representations to $\BT_w$-representations. But the invariants
\[\chi^{\BT_w}(\CM, \CF)\in \frac{\BQ[t_1^{\pm 1}, \dots, t_m^{\pm 1}]}{(1-w_0)}\left[\frac{1}{1-v}:v\in \widehat\BT, v\neq w_0^n\right]\]
are well-defined, which  implies that $(1-w_0^n)$ cannot appear as factors of $q(t)$, for all $n\in \BZ$.
\end{proof}
\subsubsection{Framing limits}
We prove in this section Theorem \ref{thm: intro main thm} from the Introduction.  We first show that the partition function of tetrahedron instantons does not depend on the framing parameters $w_{ij}$,  exploiting a suitable rigidity argument. Granting this independence, we  specialise the framing parameters $w_{ij}$ to arbitrary values and send them to infinity, which recovers the formula in Theorem \ref{thm: intro main thm}.

\begin{theorem}\label{thm: framinh independence}
    The partition function of tetrahedron instantons $\CZ_{\overline{r}}(q)$ does not depend on the weights $w_{il}$, for $i=1, \dots, 4$ and $l=1, \dots, r_i$.
\end{theorem}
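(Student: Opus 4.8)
The plan is to prove the statement coefficient by coefficient in $q$: it suffices to show that each $\CZ_{\overline r,n}$ is independent of the weights $w_{il}$. For fixed $n$ this is a \emph{finite} sum $\CZ_{\overline r,n}=\sum_{|\overline\pi|=n}[-\mathsf v_{\overline\pi}]$ of rational functions in $t_1,\dots,t_4,w_{il}$ by \eqref{eqn: partition function with vertex terms}, hence itself a rational function in the framing variables. Fixing one variable $w_{i_0l_0}$ and treating all the others (and the $t_i$) as generic constants, I would establish two facts: (I) as a function of $w_{i_0l_0}$ the invariant has no finite poles, i.e.\ it is a Laurent polynomial; and (II) it stays bounded as $w_{i_0l_0}\to 0$ and as $w_{i_0l_0}\to\infty$. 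A Laurent polynomial in one variable with finite limits at both ends is constant, and running the argument over each $w_{il}$ in turn yields the theorem.

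Step (I) is where the properness of the Quot-to-Chow morphism $\rho\colon\CM_{\overline r,n}\to\Sym^n\Delta$ enters, and it is the conceptual core of the rigidity. First I record that every character $w^\nu$ purely in the framing torus is a \emph{compact} weight in the sense of Definition~3.1: since $w^\nu$ is trivial on $\TT_0$, the maximal torus inside $\ker(w^\nu)$ contains $\TT_0$, so the associated fixed locus is contained in $\CM_{\overline r,n}^{\TT_0}$, which is proper by Remark~\ref{lemma:compact_fixed_locus_Quot}. By the rigidity Proposition~\ref{prop: Noah} no factor $1-w^\nu$ occurs in the denominator. To exclude the remaining \emph{mixed} poles I would push forward along $\rho$: it is proper and $\TT$-equivariant with $\TT_1$ acting \emph{trivially} on $\Sym^n\Delta$, and the $\TT_0$-fixed locus of $\Sym^n\Delta$ is the single reduced point $n\cdot[0]$. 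Writing $\CZ_{\overline r,n}=\chi(\Sym^n\Delta,\rho_*\widehat{\oO}^{\vir}_{\CM_{\overline r,n}})$ and applying Thomason localisation downstairs, the only denominators produced come from the square-root Euler class of the normal $\TT_0$-representation of $n\cdot[0]$, which is \emph{purely in the $t_i$ and carries no $w$}, while the numerator is the Euler characteristic of a genuine $\TT$-equivariant class over the \emph{proper} fibre $\rho^{-1}(n\cdot[0])$, hence a Laurent polynomial in $t$ and $w$. Therefore the entire $w$-dependence of $\CZ_{\overline r,n}$ sits in a Laurent polynomial in each $w_{il}$.

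Step (II) is a degree estimate on the vertex sum. Granting Step (I), the top (resp.\ bottom) degree of $\CZ_{\overline r,n}$ in $w_{i_0l_0}$ is constrained by the behaviour of the full sum at $w_{i_0l_0}\to\infty$ (resp.\ $\to 0$). Using the decomposition~\eqref{eqn: decomposition vertex}, the diagonal pieces $\mathsf v^{(ii,ll)}_{\overline\pi}$ are independent of every framing variable (the factor $w_{il}^{-1}w_{il}$ cancels), so only the off-diagonal blocks $\mathsf v^{(ij,lk)}_{\overline\pi}$ carry $w_{i_0l_0}$. Writing each such block through the bracket and using that $[t^\mu w_{i_0l_0}^{a}]$ contributes degree $\tfrac12|a|$ in $w_{i_0l_0}$ at either end, I would analyse the two framing limits of $\sum_{\overline\pi}[-\mathsf v_{\overline\pi}]$ and show that the positive- and negative-degree contributions cancel, so that both limits are finite. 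The Calabi-Yau relation $t_1t_2t_3t_4=1$ (equivalently $\kappa_i=t_i^{-1}$), through the identity $P_{j_1j_2j_3}+\overline{P_{j_1j_2j_3}}=P_{1234}$ and the symmetry $Q_{\overline\pi_i}\overline{Q_{\overline\pi_j}}\leftrightarrow Q_{\overline\pi_j}\overline{Q_{\overline\pi_i}}$ of the off-diagonal blocks, is what drives this cancellation. Proposition~\ref{prop: t movable} guarantees that each $[-\mathsf v_{\overline\pi}]$ is well defined along the way. Combining with Step (I), $\CZ_{\overline r,n}$ has top degree $\le 0$ and bottom degree $\ge 0$ in $w_{i_0l_0}$, hence is constant.

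The main obstacle is Step (II). Step (I) fixes the \emph{shape} of the answer as a Laurent polynomial in the framing variables using only properness of $\rho$ and is essentially formal, but bounding its \emph{degree} is not: the individual fixed-point contributions $[-\mathsf v_{\overline\pi}]$ genuinely depend on the $w_{il}$ and generically have positive growth at $w_{i_0l_0}\to\infty$, so a naive term-wise estimate need not give degree $\le 0$. One must therefore control the leading coefficients of the sum over the (finitely many) tuples $\overline\pi$ and show they cancel, which is exactly the point where the bracket combinatorics and the Calabi-Yau relation must be exploited carefully. I expect the cleanest route is to isolate, for each pair $(i_0,l_0)$, the blocks $\mathsf v^{(ij,lk)}$ containing $w_{i_0l_0}$ and to match their highest-degree terms across the involution $i\leftrightarrow j$ and $l\leftrightarrow k$, so that the leading asymptotics of the whole sum collapse to a framing-free quantity.
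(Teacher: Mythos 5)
Your overall architecture is the same as the paper's: show that each coefficient $\CZ_{\overline r,n}$, viewed as a rational function of a single framing variable, has no poles on $\BC^*$ and is bounded at $0$ and $\infty$, hence constant. Where you genuinely diverge is in how you exclude the mixed poles $1-w_{il}^{-1}w_{jk}t^\nu$: the paper shows directly that each such weight is \emph{compact} by twisting the torus action via an automorphism $\tau_\nu$ of $\TT$ and reducing to properness of the $\TT_0$-fixed locus (support at the origin), whereas you push forward along the proper Quot-to-Chow map $\rho$ and argue that on $\Sym^n\Delta$, where $\TT_1$ acts trivially, only $t$-denominators can arise. Your route is morally the standard rigidity argument and can be made to work, but as stated it is imprecise: $\Sym^n\Delta$ is singular at $n\cdot[0]$, so there is no normal bundle and certainly no ``square-root Euler class'' there (that is an Oh--Thomas notion attached to an orthogonal bundle on the moduli space, not to the Chow variety). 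The correct formulation is that $\rho_*\widehat{\oO}^{\vir}$ is a genuine class in $K_0^\TT(\Sym^n\Delta)$ of an affine variety whose coordinate ring is generated in characters of $\TT_0$ only, so its equivariant Euler characteristic is a Hilbert series whose denominator is a product of factors $(1-t^\mu)$ free of the $w$'s; you would also need to check that this computation agrees with the definition \eqref{def: inv by loc def} via localisation on $\CM_{\overline r,n}$.

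The genuine gap is in your Step (II). You assert that the individual fixed-point contributions $[-\mathsf v_{\overline\pi}]$ ``generically have positive growth at $w_{i_0l_0}\to\infty$'' and that one must exhibit cancellation of leading coefficients \emph{across} the sum over $\overline\pi$. This is a misdiagnosis, and you leave the step unproven on the basis of it. In fact each $[-\mathsf v_{\overline\pi}]$ is individually bounded at $w_{i_0l_0}\to 0$ and $\infty$: for each pair $((i,l),(j,k))$ the $w_{il}^{-1}w_{jk}$-graded and $w_{il}w_{jk}^{-1}$-graded pieces of $\mathsf v_{\overline\pi}$ have ranks $|\pi_{jk}|-|\pi_{il}|$ and $|\pi_{il}|-|\pi_{jk}|$, which sum to zero, and since $[w^{\pm 1}t^\mu]$ grows like $w^{1/2}$ at either end, the total degree of the full product over blocks is zero. (Individual \emph{blocks} $[-\mathsf v^{(ii,lk)}_{\overline\pi}]$ can grow, but they pair off within the same $\overline\pi$.) This is exactly the content of the paper's equation \eqref{eqn:poles_showing_up} --- equal numbers of weights $\mu_{ij,lk}$ and $\nu_{ij,lk}$ --- and is confirmed by the explicit finite limits of Proposition \ref{prop: limits}. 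So no inter-term cancellation is needed; the term-wise estimate you dismiss is precisely what closes the argument. Until you either carry out this term-wise count or produce the cancellation you postulate, your proof of boundedness, and hence of the theorem, is incomplete.
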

\begin{proof}
    We prove this  result  by exploiting similar arguments to \cite[Thm. 6.5]{FMR_higher_rank}.    The $n$-th coefficient of $\CZ_{\overline{r}}(q)$ is a sum of contributions
\[
[-\mathsf{v}_{\overline{\pi}}],\qquad \lvert \overline{\pi}\rvert = n.
\]
A simple manipulation shows that
\begin{equation}\label{eqn:poles_showing_up}
[-\mathsf{v}_{\overline{\pi}}] = A(t)\prod_{(i,l)\neq (j,k)} \frac{\prod_{\mu_{ij,lk}}(1-w_{il}^{-1}w_{jk} t^{\mu_{ij,lk}})}{\prod_{\nu_{ij,lk}}(1-w_{il}^{-1}w_{jk} t^{\nu_{ij,lk}})},
\end{equation}
where the product is over all $(i,l)\neq (j,k) $ such  that $i,j=1, \dots, 4$ and $l=1, \dots, r_i$ and $j=1, \dots, r_j$ while  $A(t)\in \BQ(\!(t_1^{\frac{1}{2}},t_2^{\frac{1}{2}},t_3^{\frac{1}{2}}, t_4^{\frac{1}{2}})\!)/(t_1t_2t_3t_4-1) $ and the number of weights $\mu_{ij,lk}$ and $\nu_{ij,lk}$ is the same. Thus, $\CZ_{\overline{r}}(q)$ is a homogeneous rational expression of total degree 0 with respect to the variables $w_{il}$. We aim to show that $\CZ_{\overline{r}}(q)$ has  no poles of the form $1-w_{il}^{-1}w_{jk} t^{\nu}$, implying that it is a degree $0$ polynomial in the $w_{il}$, hence constant in the $w_{il}$.

Set $\mathsf w =  w_{il}^{-1}w_{jk}t^\nu$ for fixed $(i,l)\neq (j,k)$ and $\nu \in \widehat{\TT}_0$. To see that $1-\mathsf w$ is not a pole, we show that $ \mathsf w$ is a non-compact weight and use Proposition \ref{prop: Noah}. In other words,   we need to prove that the fixed locus of the torus  $\TT_{\mathsf w} = \ker (\mathsf w)$ is proper.

 Consider, consider the automorphism $\tau_\nu\colon \TT \simto \TT$ defined by
\[
(t_1,t_2,t_3, t_4,w_{11}, \dots, w_{4r_4}) \mapsto (t_1,t_2,t_3, t_4,w_{11},\ldots,w_{il}t^{-\nu},\ldots,w_{jk},\ldots,w_{4r_4}).
\]
It maps $\TT_{\mathsf w}\subset \TT$ isomorphically onto the subtorus $\TT_0 \times \set{w_{il}=w_{jk}} \subset \TT$. This yields an inclusion of tori
\begin{equation}\label{inclusions_tori}
\TT_0 \simto \TT_0 \times \Set{(1,\ldots,1)} \into \tau(\TT_{\mathsf w}).
\end{equation}
We consider the action $\sigma_\nu \colon \TT \times \CM_{\overline{r}, n} \to \CM_{\overline{r}, n} $ where $\TT_0$ translates the support of the quotient sheaf in the usual way, the $l$-th summand of $\iota_{i,*}\OO^{\oplus r_i}$ gets scaled by $w_{il}t^\nu$ and all other summands by $w_{jk}$ for $(j,k)\neq (i,l)$. 
 We have a commutative diagram
\[
\begin{tikzcd}[row sep=large]
\TT_{\mathsf w} \times \CM_{\overline{r}, n} \arrow{r}{\sigma}\arrow[swap]{d}{\tau_\nu\times\id} & \CM_{\overline{r}, n} \arrow[equal]{d} \\
\tau_\nu(\TT_{\mathsf w}) \times \CM_{\overline{r}, n} \arrow{r}{\sigma_\nu} & \CM_{\overline{r}, n} 
\end{tikzcd}
\]
where $\sigma$ is the restriction of the usual $\TT$-action on $\CM_{\overline{r}, n}$. This diagram induces a natural isomorphism $\CM_{\overline{r}, n} ^{\TT_{\mathsf w}}\simto\CM_{\overline{r}, n} ^{\tau_\nu(\TT_{\mathsf w})}$, which combined with \eqref{inclusions_tori}
yields an inclusion
\[
\CM_{\overline{r}, n} ^{\TT_{\mathsf w}}\simto\CM_{\overline{r}, n} ^{\tau_\nu(\TT_{\mathsf w})} \into \CM_{\overline{r}, n} ^{\TT_0},
\]
where $\CM_{\overline{r}, n} ^{\TT_0}$ is the fixed locus with respect to the action $\sigma_\nu$. But by the same reasoning as in Remark \ref{lemma:compact_fixed_locus_Quot}, this fixed locus is proper (because, again, a $\TT_0$-fixed surjection $\CE_{\overline{r}, n}\onto T$ necessarily has the quotient $T$ entirely supported at the origin $0 \in \Delta$). Thus $\mathsf w$ is a compact weight, and the result follows.
\end{proof}
By Theorem \ref{thm: framinh independence}  the partition function of tetrahedron instantons does not depend on the framing parameters $w_{il}$, therefore we can compute it by specialising them to arbitrary values. We set $w_{il}=L^{N_{il}}$, where $N_{il}\gg 0$ are large integers satisfying  $N_{il}>N_{ik}$ for $k>l$ and $ N_{jk}\gg N_{il}$ for $j>i$, and take limits $L\to \infty$.
We  totally order the indices $(i,j)$ lexicographically, setting $(j,k)>(i,l)$ if $j>i$ or $i=j$ and $k>l$.
\begin{prop}\label{prop: limits}
    Let $(j,k)>(i,l)$. If $i=j$, we have
    \begin{align*}
        \lim_{L\to \infty}[-\mathsf{v}^{(ij, lk)}_{\overline{\pi}}][-\mathsf{v}^{(ji, kl)}_{\overline{\pi}}]|_{w_{il}=L^{N_{il}}}=(-\kappa_{i}^{\frac{1}{2}})^{|\pi_{jk}|}(-\kappa_j^{-\frac{1}{2}})^{|\pi_{il}|}.
    \end{align*}
    If $i<j$, we have
    \[ \lim_{L\to \infty}[-\mathsf{v}^{(ij, lk)}_{\overline{\pi}}]|_{w_{il}=L^{N_{il}}}=(-\kappa_{i}^{\frac{1}{2}})^{|\pi_{jk}|}(-\kappa_j^{-\frac{1}{2}})^{|\pi_{il}|}.
    \]
\end{prop}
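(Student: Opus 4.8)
The plan is to reduce the whole statement to an elementary asymptotic analysis of the bracket operator $[\cdot]$. After the specialisation $w_{il}=L^{N_{il}}$ each factor $[-\mathsf v^{(ij,lk)}_{\overline\pi}]$ is a rational function in $L^{\frac12}$ with coefficients in $\BQ(t_1^{\frac12},\dots,t_4^{\frac12})/(t_1t_2t_3t_4-1)$, and the limit $L\to\infty$ simply extracts its leading term. First I would record the two asymptotic rules for a single character scaled by a framing monomial $\mathsf w=L^{N}$: writing $[\mathsf w\,t^\mu]=\mathsf w^{\frac12}t^{\frac\mu2}\bigl(1-(\mathsf w\,t^\mu)^{-1}\bigr)$ gives, as $L\to\infty$,
\[
[\mathsf w\, t^\mu]\sim \mathsf w^{\frac12}t^{\frac\mu2}\ \ (N>0),\qquad [\mathsf w\, t^\mu]\sim -\,\mathsf w^{-\frac12}t^{-\frac\mu2}\ \ (N<0),
\]
the correction factors tending to $1$ because $t^\mu$ is $L$-independent. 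Since $[\cdot]$ converts $\BZ$-linear combinations of weights into products (so $[-\mathsf v]=[\mathsf v]^{-1}$), for a framing-homogeneous class $\mathsf w\,U$ with $U=\sum_\mu a_\mu t^\mu$ a virtual $\TT_0$-representation these multiply up to
\[
[\mathsf w\,U]\sim \mathsf w^{\frac12\rk U}\,t^{\frac12 c(U)}\ \ (N>0),\qquad [\mathsf w\,U]\sim (-1)^{\rk U}\,\mathsf w^{-\frac12\rk U}\,t^{-\frac12 c(U)}\ \ (N<0),
\]
where $\rk U=\sum_\mu a_\mu$ and $c(U)=\sum_\mu a_\mu\mu\in\widehat{\TT}_0$ is the total weight.

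With these rules, both cases are handled by the same mechanism, exploiting that $-\mathsf v$ is in each case a sum of exactly two framing-homogeneous pieces whose framing monomials are reciprocal. For $i<j$ the two pieces sit inside the single term $\mathsf v^{(ij,lk)}_{\overline\pi}$, scaled by $w_{il}^{-1}w_{jk}$ (which $\to\infty$, as $N_{jk}\gg N_{il}$) and by $w_{il}w_{jk}^{-1}$ (which $\to 0$); I write them as $w_{il}^{-1}w_{jk}\,\mathsf W_{jk,il}+w_{il}w_{jk}^{-1}\,\mathsf W'_{il,jk}$ with $\mathsf W_{jk,il}=Z_{\pi_{jk}}-\kappa_j^{-1}\overline{Z_{\pi_{il}}}-\overline{P_{j_1j_2j_3}}Z_{\pi_{jk}}\overline{Z_{\pi_{il}}}$ and $\mathsf W'_{il,jk}$ its companion. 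For $i=j$ the two pieces are instead the separate terms $\mathsf v^{(ii,lk)}_{\overline\pi}$ and $\mathsf v^{(ii,kl)}_{\overline\pi}$, scaled by the reciprocal monomials $w_{il}^{-1}w_{ik}$ and $w_{ik}^{-1}w_{il}$ (one $\to 0$, the other $\to\infty$, by $N_{il}>N_{ik}$). In both situations I apply the rule above to each piece. Using $\rk P_{j_1j_2j_3}=0$, the two bracketed classes have opposite ranks $|\pi_{jk}|-|\pi_{il}|$ and $|\pi_{il}|-|\pi_{jk}|$, so the powers of the (reciprocal) framing monomial cancel and a finite limit survives. This cancellation is exactly why the $i=j$ contributions must be paired: each factor alone tends to $0$ or $\infty$, and only their product converges.

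It remains to identify the surviving $t$-monomial and sign. I would compute the total weights by additivity of $c(\cdot)$ together with the two Calabi--Yau facts $\rk P_{j_1j_2j_3}=0$ and $c(P_{j_1j_2j_3})=0$ (the latter because $P_{j_1j_2j_3}=(1-t_{j_1})(1-t_{j_2})(1-t_{j_3})$ is a product of three rank-$0$ classes, and $c$ vanishes on any product of at least two such). Writing $Q=Z_{\pi_{jk}}$, $Q'=Z_{\pi_{il}}$, the quadratic terms $\overline{P_{j_1j_2j_3}}Q\overline{Q'}$ contribute $0$ to both rank and total weight, while $c(\kappa_j^{-1}\overline{Q'})=-c(Q')-|\pi_{il}|\,\mathrm{wt}(\kappa_j)$, giving
\[
c(\mathsf W_{jk,il})=c(Q)+c(Q')+|\pi_{il}|\,\mathrm{wt}(\kappa_j),\qquad c(\mathsf W'_{il,jk})=c(Q')+c(Q)+|\pi_{jk}|\,\mathrm{wt}(\kappa_i),
\]
where $\mathrm{wt}(\kappa_i)$ is the weight of $\kappa_i=t_{i_1}t_{i_2}t_{i_3}$. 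Feeding these into the asymptotic expressions and collecting the signs $(-1)^{\rk}$ coming from the piece with $N<0$, the framing monomials drop out and the residual monomial is $\kappa_i^{\frac12|\pi_{jk}|}\kappa_j^{-\frac12|\pi_{il}|}$ up to the sign $(-1)^{|\pi_{jk}|+|\pi_{il}|}$, which reassembles into $(-\kappa_i^{\frac12})^{|\pi_{jk}|}(-\kappa_j^{-\frac12})^{|\pi_{il}|}$; the $i=j$ case is entirely analogous with $\kappa_i=\kappa_j$ once the two paired terms are combined.

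The main obstacle is the final bookkeeping step: keeping the half-integer weights $t^{\frac\mu2}$ and the signs $(-1)^{\rk}$ consistent through the reciprocal cancellation, and ensuring that no factor $\mathsf w\,t^\mu$ stays bounded (so that the correction factors genuinely tend to $1$). The latter is precisely what the strict inequalities imposed on the exponents $N_{il}$ buy us: within any single homogeneous piece every framing monomial is a strictly positive or strictly negative power of $L$, so each $\mathsf w\,t^\mu$ tends to $0$ or $\infty$. Once the total-weight identities above are in place, the two cases follow by direct substitution.
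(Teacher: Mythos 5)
Your proposal is correct and follows essentially the same route as the paper: both arguments rest on the leading-order asymptotics of $[\mathsf w\,t^\mu]$ under $w_{il}=L^{N_{il}}$, multiplicativity of $[\cdot]$, cancellation of the $L$-powers between the two reciprocally-scaled framing-homogeneous pieces, and bookkeeping of the surviving $\kappa$-monomials and signs. Your packaging of the computation via the rank and total-weight functionals $\rk U$ and $c(U)$ (with $\rk P_{j_1j_2j_3}=c(P_{j_1j_2j_3})=0$) is only a mild systematisation of the paper's direct expansion and yields the same identities.
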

\begin{proof}
    For all monomials $t^{\mu}$, we have the limits
    \begin{align*}
         \lim_{L\to \infty}[w_{il}^{-1}w_{jk}t^{\mu}]|_{w_{il}=L^{N_{il}}}&=t^{\frac{\mu}{2}}\cdot  \lim_{L\to \infty}L^{\frac{N_{jk}-N_{il}}{2}},\\
          \lim_{L\to \infty}[w_{jk}^{-1}w_{il}t^{\mu}]|_{w_{il}=L^{N_{il}}}&=(-t^{-\frac{\mu}{2}})\cdot  \lim_{L\to \infty}L^{\frac{N_{jk}-N_{il}}{2}}.
    \end{align*}
    Set $Z_{\pi_{jk}}=\sum_{\nu}t^\nu$ and $Z_{\pi_{il}}=\sum_{\mu}t^\mu$. Taking limits, we have
    \begin{align*}
         \lim_{L\to \infty}[-\mathsf{v}^{(ij, lk)}_{\overline{\pi}}]|_{w_{il}=L^{N_{il}}}&=\lim_{L\to \infty}[w_{il}^{-1}w_{jk}(-Z_{\pi_{jk}}+\kappa_j^{-1}\overline{Z_{\pi_{il}}}+\overline{P_{j_1j_2j_3}}Z_{\pi_{jk}}\overline{Z_{\pi_{il}}})]|_{w_{il}=L^{N_{il}}}\\
         &=\lim_{L\to \infty}L^{\frac{N_{jk}-N{il}}{2}(|\pi_{jk}|-|\pi_{il}|)}\cdot \frac{\prod_{\mu}t^{-\frac{\mu}{2}}}{   \prod_{\nu}t^{\frac{\nu}{2}}}   \cdot \kappa_j^{-\frac{|\pi_{il}|}{2}}.
    \end{align*}
    A similar computation yields
    \begin{align*}
        \lim_{L\to \infty}[-\mathsf{v}^{(ji, kl)}_{\overline{\pi}}]|_{w_{il}=L^{N_{il}}}= (-1)^{|\pi_{jk}|-|\pi_{il}|}\lim_{L\to \infty}L^{\frac{N_{jk}-N_{il}}{2}}\cdot  \frac{\prod_{\nu}t^{\frac{\nu}{2}}}{   \prod_{\mu}t^{-\frac{\mu}{2}}} \cdot \kappa_i^{\frac{|\pi_{jk}|}{2}},
    \end{align*}
    by which we conclude that 
    \begin{align*}
         \lim_{L\to \infty}[-\mathsf{v}^{(ij, lk)}_{\overline{\pi}}][-\mathsf{v}^{(ji, kl)}_{\overline{\pi}}]|_{w_{il}=L^{N_{il}}}=(-\kappa_{i}^{\frac{1}{2}})^{|\pi_{jk}|}(-\kappa_j^{-\frac{1}{2}})^{|\pi_{il}|}.
    \end{align*}
    The second claim follows by an analogous computation.
\end{proof}
For an integer $m\in \BZ$, we set  the \emph{sign function} 
\[
\sgn(m)=\begin{cases}
1 & m>0\\
-1 & m<0\\
0 & m=0.
\end{cases}
\]
\begin{lemma}\label{lemma combin}
Let $x_i$ be variables and $c_i\in \BZ$ be integers, for $i=1, \dots, r$.  Then we have
\[
\prod_{1\leq i <j\leq r} x_i^{c_j}x_j^{-c_i}=\prod_{i=1}^r\prod_{j=1}^r x_j^{\sgn(i-j)c_i}.
\]
\end{lemma}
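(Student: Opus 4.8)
The statement is an identity between two monomials in the formal variables $x_1,\dots,x_r$, so the plan is simply to compare the exponent of each variable on the two sides and check they agree. Since both sides are products of powers of the $x_i$, it suffices to fix an index $k$ and compute the total exponent of $x_k$ appearing on the left and on the right.

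On the left-hand side $\prod_{1\leq i<j\leq r}x_i^{c_j}x_j^{-c_i}$, the variable $x_k$ occurs in exactly two families of factors: as the first variable $x_i$ with $i=k$ and $j$ ranging over $j>k$, contributing $\sum_{j>k}c_j$; and as the second variable $x_j$ with $j=k$ and $i$ ranging over $i<k$, contributing $-\sum_{i<k}c_i$. Thus the exponent of $x_k$ on the left equals $\sum_{j>k}c_j-\sum_{i<k}c_i$.

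On the right-hand side $\prod_{i=1}^r\prod_{j=1}^r x_j^{\sgn(i-j)c_i}$, the variable $x_k$ appears precisely in the factors with $j=k$, so its exponent is $\sum_{i=1}^r\sgn(i-k)c_i$. Using that $\sgn(i-k)=+1$ for $i>k$, $\sgn(i-k)=-1$ for $i<k$, and that the $i=k$ term vanishes because $\sgn(0)=0$, this equals $\sum_{i>k}c_i-\sum_{i<k}c_i$, which matches the left-hand exponent after renaming the dummy index. As $k$ was arbitrary, the two monomials coincide.

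There is essentially no obstacle here: the argument is a bookkeeping of indices, and the only points requiring a modicum of care are confirming that the $i=k$ summand on the right drops out (so that $x_k$ does not contribute to its own power on the right, in parallel with its absence from the strict product $i<j$ on the left) and keeping the two roles of the indices in the left-hand product straight, each $x_k$ picking up a positive contribution from the larger indices and a negative one from the smaller. One could alternatively run an induction on $r$, peeling off the index $r$, but the direct exponent comparison is cleaner and immediate.
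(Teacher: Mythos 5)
Your proof is correct. You verify the identity by fixing a variable $x_k$ and comparing its total exponent on the two sides: on the left it picks up $\sum_{j>k}c_j$ from the factors where it plays the role of $x_i$ and $-\sum_{i<k}c_i$ from those where it plays the role of $x_j$, while on the right the exponent is $\sum_{i=1}^r\sgn(i-k)c_i=\sum_{i>k}c_i-\sum_{i<k}c_i$; these agree. The paper instead argues by induction on $r$, peeling off the index $r$: it writes the left-hand product as the product over $1\leq i<j\leq r-1$ times $\prod_{i=1}^{r-1}x_i^{c_r}x_r^{-c_i}$, applies the inductive hypothesis to the first factor, and reassembles. The two routes are equally elementary and equally short; your direct exponent comparison has the small advantage of making transparent \emph{why} the identity holds (the $\sgn$ function is exactly the bookkeeping device recording whether an index is larger or smaller than $k$), whereas the induction is slightly more mechanical but avoids any discussion of which factors contain a given variable. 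You even note the inductive alternative yourself, so nothing is missing.
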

\begin{proof}
    For $r=1$ the statement holds  trivially. By induction, suppose that it holds for $r-1$. Then we have
    \begin{align*}
        \prod_{1\leq i <j\leq r} x_i^{c_j}x_{j}^{-c_i}&= \prod_{1\leq i <j\leq r-1}x_i^{c_j}x_j^{-c_i}\cdot \prod_{i=1}^{r-1}x_i^{c_r}x_r^{-c_i}\\
        &=\prod_{i,j=1}^{r-1}  x_j^{\sgn(i-j)c_i}\cdot \prod_{i=1}^{r-1}x_i^{c_r}x_r^{-c_i}\\
        &=\prod_{i,j=1}^r x_j^{\sgn(i-j)c_i}.
    \end{align*}
\end{proof}
If $\overline{r}=(0,0,0,1)$ the partition function of tetrahedron instantons recovers\footnote{A direct way to see this is that if $\overline{r}=(0,0,0,1)$ then the vertex term \eqref{eqn: vertex term} coincides with the one used by Okounkov \cite[Eqn. (3.4.30)]{Okounk_Lectures_K_theory}.} the  partition function of \emph{K}-theoretic Donaldson-Thomas theory of $\BC^3$, whose explicit formula had been proved by Okounkov\ \cite[Thm. 3.3.6]{Okounk_Lectures_K_theory}
\begin{align}\label{eqn: Oku partition function}
    \CZ_{(0,0,0,1)}(-q)=\Exp\left(\frac{[t_1t_2][t_1t_3][t_2t_3]}{[t_1][t_2][t_3]}\frac{1}{[\kappa_4^{\frac{1}{2}} q][\kappa_4^{\frac{1}{2}}q^{-1}]}, \right),
\end{align}
where the plethystic exponential is defined as follows. For any formal power series $f(p_1, \ldots, p_r; q_1, \ldots, q_s)$ in $\BQ(p_1, \ldots, p_r)[\![q_1, \ldots, q_s]\!]$, such that $f(p_1, \ldots, p_r;0,\ldots,0)=0$, its \emph{plethystic exponential} is defined as 
\begin{align}\label{eqn: on ple} 
\Exp(f(p_1, \ldots, p_r;q_1, \ldots, q_s)) &:= \exp\Big( \sum_{n=1}^{\infty} \frac{1}{n} f(p_1^n, \ldots, p_r^n;q_1^n, \ldots, q_s^n) \Big),
\end{align}
viewed as an element of $\BQ(p_1, \ldots, p_r)[\![q_1, \ldots, q_s]\!]$.

Notice that $\CZ_{(0,0,0,1)}(q)$ is symmetric in the variables $t_1, t_2, t_3$, and invariant under $q\to q^{-1}$. We define the partition function
\begin{align*}
    \CZ^{(i)}(q)=\CZ_{(0,0,0,1)}(q)|_{t_1=t_{i_1}, t_2=t_{i_2}, t_3=t_{i_3}}, \quad i=1, \dots, 4,
\end{align*}
which  is just the partition function \eqref{eqn: Oku partition function} in the three variables $\{i_1, i_2, i_3\}$ different than $i$. In other words, $ \CZ^{(i)}(q)$ is the tetrahedron partition function for the tuple  $e_i$, which is $4$-tuple with  $1$ in the $i$-th entry and $0$ in all the others zero.
\begin{theorem}\label{thm:factorization}
    Let $\overline{r}=(r_1, \dots, r_4)$ and $r=\sum_{i=1}^4r_i$. We have the factorisation 
    \begin{align*}
        \CZ_{\overline{r}}(q)=\prod_{i=1}^4 \prod_{l=1}^{r_i}\CZ^{(i)}\left((-1)^{r+1}q\kappa_i^{\frac{-r_i-1}{2}+l}\prod_{j=1}^4\kappa_j^{\frac{r_j\cdot\sgn(i-j)}{2}}\right).
    \end{align*}
\end{theorem}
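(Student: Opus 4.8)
The plan is to start from the vertex expression \eqref{eqn: partition function with vertex terms}, $\CZ_{\overline r}(q)=\sum_{\overline\pi}[-\mathsf{v}_{\overline\pi}]\,q^{|\overline\pi|}$, and to evaluate it through a framing limit. By Theorem \ref{thm: framinh independence} the series is constant in the $w_{il}$, so each $q$-coefficient — a finite sum — equals the $L\to\infty$ limit of its specialisation at $w_{il}=L^{N_{il}}$, with the ordering of the exponents $N_{il}$ fixed as in the paragraph preceding Proposition \ref{prop: limits}; this lets me interchange limit and (finite) sum term by term. Since the bracket $[\cdot]$ is multiplicative, the decomposition \eqref{eqn: decomposition vertex} gives $[-\mathsf{v}_{\overline\pi}]=\prod_{1\le i\le j\le 4}\prod_{l,k}[-\mathsf{v}^{(ij,lk)}_{\overline\pi}]$, and I would split this product into three groups: the diagonal terms $(i=j,\ l=k)$, the same-direction off-diagonal terms $(i=j,\ l\neq k)$, and the between-direction terms $(i<j)$.

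First I would note that the diagonal factor $[-\mathsf{v}^{(ii,ll)}_{\overline\pi}]$ carries trivial framing $w_{il}^{-1}w_{il}=1$, is $L$-independent, and equals the rank-one $\BC^3_i$ vertex attached to $\pi_{il}$. For the remaining factors I would invoke Proposition \ref{prop: limits}, grouping each same-direction pair $(l,k),(k,l)$ and treating each between-direction factor individually; in every case the surviving contribution is the monomial $(-\kappa_i^{1/2})^{|\pi_{jk}|}(-\kappa_j^{-1/2})^{|\pi_{il}|}$, which for a lexicographically ordered pair of framing indices $\alpha<\beta$ can be written uniformly as $(-\kappa_{i_\beta}^{-1/2})^{|\pi_\alpha|}(-\kappa_{i_\alpha}^{1/2})^{|\pi_\beta|}$. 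Because each such monomial depends only on the sizes $|\pi_{il}|$, the limit of $[-\mathsf{v}_{\overline\pi}]$ factors as a product of rank-one vertices times a monomial prefactor; using $q^{|\overline\pi|}=\prod_{i,l}q^{|\pi_{il}|}$, the sum over $\overline\pi$ then splits into independent sums over the individual $\pi_{il}$.

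The combinatorial heart is to identify the total prefactor multiplying $q^{|\pi_{il}|}$. Setting $x_{(i,l)}=\kappa_i^{1/2}$ and $c_{(i,l)}=|\pi_{il}|$, the $\kappa$-part of the cross-term product over all ordered pairs is exactly $\prod_{\alpha<\beta}x_\alpha^{c_\beta}x_\beta^{-c_\alpha}$, the left-hand side of Lemma \ref{lemma combin}, which rewrites it as $\prod_{\alpha}\big(\prod_{\beta}x_\beta^{\sgn(\alpha-\beta)}\big)^{c_\alpha}$. Evaluating the base $\prod_{\beta}\kappa_{i_\beta}^{\sgn(\alpha-\beta)/2}$ for $\alpha=(i,l)$ by separating directions, the directions $j<i$ give $\kappa_j^{r_j/2}$, the directions $j>i$ give $\kappa_j^{-r_j/2}$, and within direction $i$ the indices $m<l$ and $m>l$ give $\kappa_i^{(l-1)/2}$ and $\kappa_i^{-(r_i-l)/2}$, combining to $\kappa_i^{\frac{-r_i-1}{2}+l}\prod_{j=1}^4\kappa_j^{\frac{r_j\sgn(i-j)}{2}}$. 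The signs $(-1)$ attached to the monomials accumulate to $(-1)^{(r-1)|\pi_{il}|}=\big((-1)^{r+1}\big)^{|\pi_{il}|}$ per index, producing the global $(-1)^{r+1}$ shift of $q$.

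Assembling the pieces, the factorised sum becomes $\prod_{i=1}^4\prod_{l=1}^{r_i}\sum_{\pi}[-\mathsf{v}^{(ii)}_\pi]\big((-1)^{r+1}q\,\kappa_i^{\frac{-r_i-1}{2}+l}\prod_{j=1}^4\kappa_j^{\frac{r_j\sgn(i-j)}{2}}\big)^{|\pi|}$, and each inner sum is by definition $\CZ^{(i)}$ at the shifted argument, giving the claimed formula. The step I expect to be most delicate is precisely the bookkeeping of the previous paragraph: matching the lexicographic pairing produced by Proposition \ref{prop: limits} with the hypotheses of Lemma \ref{lemma combin}, and tracking the half-integer $\kappa$-exponents together with the signs so that the self-shift $\kappa_i^{\frac{-r_i-1}{2}+l}$, the cross-shift $\prod_{j}\kappa_j^{\frac{r_j\sgn(i-j)}{2}}$, and the overall sign $(-1)^{r+1}$ all emerge simultaneously and with the correct normalisation.
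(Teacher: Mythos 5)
Your proposal is correct and follows essentially the same route as the paper's proof: invoke Theorem \ref{thm: framinh independence} to evaluate at $w_{il}=L^{N_{il}}$ and pass to the limit $L\to\infty$ inside each (finite) $q$-coefficient, reduce the off-diagonal vertex factors to monomials in the $\kappa_j^{\pm 1/2}$ via Proposition \ref{prop: limits}, reorganise them with Lemma \ref{lemma combin}, and let the sum over $\overline\pi$ split into independent rank-one sums. Your explicit bookkeeping of the exponents (separating $j<i$, $j>i$, and the within-direction indices $m\lessgtr l$) and of the sign count $(-1)^{r-1}$ per index matches the identity the paper states for $\prod_{j,k}(-\kappa_j^{1/2})^{\sgn((i,l)-(j,k))}$, so no further comparison is needed.
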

\begin{proof}
   Set $w_{il}=L^{N_{il}}$. By Theorem \ref{thm: framinh independence} the partition function $\CZ_{\overline{r}}(q)$ can be computed in the limit $L\to \infty$. By  Proposition \ref{prop: limits}, Lemma \ref{lemma combin} and \eqref{eqn: partition function with vertex terms} we have
\begin{align*}
 \CZ_{\overline{r}}(q) &= \lim_{L\to \infty}\sum_{\overline{\pi}} q^{|\overline{\pi}|}\prod_{1\leq i\leq j\leq 4} \prod_{\substack{1\leq l\leq r_i\\ 1\leq k\leq r_j}} [-\mathsf{v}^{(ij, lk)}_{\overline{\pi}}]\\
     &= \lim_{L\to \infty}\sum_{\overline{\pi}}\prod_{i=1}^4 \prod_{l=1}^{r_i} q^{|\pi_{il}|}[-\mathsf{v}_{\overline{\pi}}^{(ii, ll)}]\prod_{i=1}^4\prod_{1\leq k<l\leq r_i}[-\mathsf{v}^{(ii, lk)}_{\overline{\pi}}][-\mathsf{v}^{(ii, kl)}_{\overline{\pi}}]\prod_{1\leq i<j\leq 4}\prod_{\substack{1\leq l\leq r_i\\ 1\leq k\leq r_j}}[-\mathsf{v}^{(ij, lk)}_{\overline{\pi}}]\\
    &= \sum_{\overline{\pi}}\prod_{i=1}^4 \prod_{l=1}^{r_i} q^{|\pi_{il}|}[-\mathsf{v}_{\overline{\pi}}^{(ii, ll)}]\prod_{(i,l)<(j,k)}(-\kappa_{i}^{\frac{1}{2}})^{|\pi_{jk}|}(-\kappa_j^{-\frac{1}{2}})^{|\pi_{il}|}\\
    &= \sum_{\overline{\pi}}\prod_{i=1}^4 \prod_{l=1}^{r_i} q^{|\pi_{il}|}[-\mathsf{v}_{\overline{\pi}}^{(ii, ll)}]\prod_{i=1}^4\prod_{l=1}^{r_i}\left(\prod_{j=1}^4\prod_{k=1}^{r_j}(-\kappa_j^{\frac{1}{2}})^{\sgn((i,l)-(j,k))}\right)^{|\pi_{il}|},
    \end{align*}
where the sign function is defined as
\[
\sgn((i,l)-(j,k))=\begin{cases}
1 & (i,l)<(j,k)\\
-1 & (i,l)>(j,k)\\
0 & (i,l)=(j,k).
\end{cases}
\]
For a fixed $(i,l)$, counting the number of $(j,k)$ such that $(i,l)<(j,k)$ with respect to the lexicographic order yields the identity
\[
\prod_{j=1}^4\prod_{k=1}^{r_j}(-\kappa_j^{\frac{1}{2}})^{\sgn((i,l)-(j,k))}=(-1)^{r+1}\kappa_i^{\frac{-r_i-1}{2}+l}\prod_{j=1}^4\kappa_j^{\frac{r_j\cdot\sgn(i-j)}{2}}.
\]
Therefore we have
\begin{align*}
     \CZ_{\overline{r}}(q) &= \sum_{\overline{\pi}}\prod_{i=1}^4 \prod_{l=1}^{r_i}[-\mathsf{v}_{\overline{\pi}}^{(ii, ll)}]\left((-1)^{r+1}q\kappa_i^{\frac{-r_i-1}{2}+l}\prod_{j=1}^4\kappa_j^{\frac{r_j\cdot\sgn(i-j)}{2}} \right)^{|\pi_{il}|}\\
     &=\prod_{i=1}^4 \prod_{l=1}^{r_i}\sum_{\pi_{il}}[-\mathsf{v}_{\overline{\pi}}^{(ii, ll)}]\left((-1)^{r+1}q\kappa_i^{\frac{-r_i-1}{2}+l}\prod_{j=1}^4\kappa_j^{\frac{r_j\cdot\sgn(i-j)}{2}} \right)^{|\pi_{il}|}\\
     &=\prod_{i=1}^4 \prod_{l=1}^{r_i}\CZ^{(i)}\left((-1)^{r+1}q\kappa_i^{\frac{-r_i-1}{2}+l}\prod_{j=1}^4\kappa_j^{\frac{r_j\cdot\sgn(i-j)}{2}}\right).
\end{align*}
\end{proof}
Combining Theorem \ref{thm:factorization} with the explicit expression \eqref{eqn: Oku partition function}, we prove a closed formula for the partition function of tetrahedron instantons. We set 
\[\kappa_{\overline{r}}:=\prod_{i=1}^4 \kappa_i^{r_i}.\]
\begin{theorem}\label{thm: explicit expression inv}
    Let $\overline{r}=(r_1, \dots, r_4)$  and $r=\sum_{i=1}^4r_i$. We have 
    \[
    \CZ_{\overline{r}}((-1)^rq)=\Exp\left(-\frac{[t_1t_2][t_1t_3][t_2t_3]}{[t_1][t_2][t_3][t_4]}\frac{[\kappa_{\overline{r}}]}{[\kappa_{\overline{r}}^{\frac{1}{2}} q][\kappa_{\overline{r}}^{\frac{1}{2}}q^{-1}]} \right).
    \]
\end{theorem}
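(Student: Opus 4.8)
The plan is to feed the factorisation of Theorem \ref{thm:factorization} into the explicit rank-one formula \eqref{eqn: Oku partition function} and to collapse the resulting product into a single plethystic exponential. Writing $\CZ^{(i)}(-q')=\Exp\!\big(A_i\cdot \tfrac{1}{[\kappa_i^{1/2}q'][\kappa_i^{1/2}(q')^{-1}]}\big)$ with $A_i=\frac{[t_{i_1}t_{i_2}][t_{i_1}t_{i_3}][t_{i_2}t_{i_3}]}{[t_{i_1}][t_{i_2}][t_{i_3}]}$, I would first substitute $q\mapsto(-1)^rq$ in the factorisation; since the argument of each factor carries the sign $(-1)^{r+1}$, the two signs combine to $-1$ and one obtains $\CZ_{\overline r}((-1)^rq)=\prod_{i=1}^4\prod_{l=1}^{r_i}\CZ^{(i)}(-q_{il})$, where $q_{il}=q\,\kappa_i^{(-r_i-1)/2+l}\prod_j\kappa_j^{r_j\sgn(i-j)/2}$. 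Because $\Exp(f)\Exp(g)=\Exp(f+g)$ (immediate from \eqref{eqn: on ple} and additivity of the Adams operations), this product becomes $\Exp\big(\sum_{i}A_i S_i\big)$ with $S_i=\sum_{l=1}^{r_i}[\kappa_i^{1/2}q_{il}]^{-1}[\kappa_i^{1/2}q_{il}^{-1}]^{-1}$, all viewed as rational functions in the finite cover where the relevant square roots exist. The theorem then reduces to the formal identity $\sum_i A_iS_i=-\frac{[t_1t_2][t_1t_3][t_2t_3]}{[t_1][t_2][t_3][t_4]}\frac{[\kappa_{\overline r}]}{[\kappa_{\overline r}^{1/2}q][\kappa_{\overline r}^{1/2}q^{-1}]}$.

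Next I would simplify the prefactors. Using $t_1t_2t_3t_4=1$ together with $[x^{-1}]=-[x]$, one has $[t_at_b]=-[t_ct_d]$ whenever $\{a,b,c,d\}=\{1,2,3,4\}$, so after cancelling signs every $A_i$ has the same numerator $N=[t_1t_2][t_1t_3][t_2t_3]$ and denominator $\prod_{j\ne i}[t_j]$; hence $A_i=N[t_i]/D$ with $D=[t_1][t_2][t_3][t_4]$. Since $[t_i]=-[\kappa_i]$, the claim becomes equivalent to $\sum_{i=1}^4[t_i]S_i=-\,[\kappa_{\overline r}]\,\big([\kappa_{\overline r}^{1/2}q][\kappa_{\overline r}^{1/2}q^{-1}]\big)^{-1}$.

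The computational heart is a double telescoping of $\sum_i[t_i]S_i$, carried out with the two elementary identities $[xy][xy^{-1}]=[x]^2-[y]^2$ and, writing $\{x\}:=x^{1/2}+x^{-1/2}$ and $\Psi(x):=\{x\}/[x]$, the cotangent-type telescoping $\frac{1}{[A][B]}=\frac{1}{2[\kappa]}\big(\Psi(A)-\Psi(B)\big)$ valid whenever $B/A=\kappa$. Setting $\alpha_l=q\kappa_i^{\,l-r_i/2}\prod_j\kappa_j^{r_j\sgn(i-j)/2}$, a one-line manipulation gives $[\kappa_i^{1/2}q_{il}]=[\alpha_l]$ and $[\kappa_i^{1/2}q_{il}^{-1}]=-[\alpha_{l-1}]$, so each summand of $S_i$ equals $-1/([\alpha_l][\alpha_{l-1}])$ with $\alpha_l/\alpha_{l-1}=\kappa_i$; the cotangent identity then telescopes the inner sum to the boundary value $S_i=\frac{1}{2[\kappa_i]}\big(\Psi(u_i^+)-\Psi(u_i^-)\big)$, where $u_i^{\pm}=q\,\kappa_i^{\pm r_i/2}\prod_{j\ne i}\kappa_j^{r_j\sgn(i-j)/2}$. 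Multiplying by $[t_i]=-[\kappa_i]$ cancels the denominator $2[\kappa_i]$, and the key bookkeeping observation $u_{i+1}^-=u_i^+$ (a direct consequence of $\sgn(i-j)$ jumping by $2$ as $j$ crosses $i$) makes the remaining sum over $i=1,\dots,4$ telescope to $-\tfrac12\big(\Psi(u_4^+)-\Psi(u_1^-)\big)$. Finally $u_4^+=q\kappa_{\overline r}^{1/2}$ and $u_1^-=q\kappa_{\overline r}^{-1/2}$; applying the cotangent identity in reverse (now with $\kappa=\kappa_{\overline r}$) together with $[q\kappa_{\overline r}^{-1/2}]=-[\kappa_{\overline r}^{1/2}q^{-1}]$ gives $\Psi(u_4^+)-\Psi(u_1^-)=2[\kappa_{\overline r}]/\big([\kappa_{\overline r}^{1/2}q][\kappa_{\overline r}^{1/2}q^{-1}]\big)$, so the prefactor $-\tfrac12$ yields exactly $\sum_i[t_i]S_i=-[\kappa_{\overline r}]/\big([\kappa_{\overline r}^{1/2}q][\kappa_{\overline r}^{1/2}q^{-1}]\big)$, the required identity.

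The individual steps (prefactor reduction and the two telescopings) are mechanical once the auxiliary bracket $\{x\}$ is introduced; the main obstacle I anticipate is purely one of bookkeeping, namely keeping track of the several half-integer powers of the $\kappa_j$ hidden in $q_{il}$ and, above all, of the signs accumulated from $[x^{-1}]=-[x]$ at each stage, so that the final answer carries the correct overall minus and is compatible with the $(-1)^r$ in the statement. The well-definedness of $\Exp$ is immediate, since $1/([\kappa^{1/2}q][\kappa^{1/2}q^{-1}])$ is $O(q)$ and so the exponent vanishes at $q=0$.
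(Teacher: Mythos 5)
Your proposal is correct, and it follows the paper's overall architecture — plug the rank-one formula \eqref{eqn: Oku partition function} into the factorisation of Theorem \ref{thm:factorization}, combine the plethystic exponentials, and reduce to the scalar identity
\[
\sum_{i=1}^4\sum_{l=1}^{r_i}\frac{[\kappa_i]}{[\kappa_i^{\frac{1}{2}} q_{il}][\kappa_i^{\frac{1}{2}}q^{-1}_{il}]}=\frac{[\kappa_{\overline{r}}]}{[\kappa_{\overline{r}}^{\frac{1}{2}} q][\kappa_{\overline{r}}^{\frac{1}{2}}q^{-1}]}
\]
(your version $\sum_i[t_i]S_i=-[\kappa_{\overline r}]/(\cdots)$ is equivalent via $[t_i]=-[\kappa_i]$, and your prefactor reduction $A_i=N[t_i]/D$ is exactly the paper's use of $t_1t_2t_3t_4=1$). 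Where you genuinely diverge is in how this identity is established. The paper proves the more general statement \eqref{eqn: third eqn} by induction on $r$, peeling off the extreme variables $x_1,x_r$ and reducing to a four-variable base identity that the authors verify with Mathematica. You instead give a closed-form double telescoping: the partial-fraction identity $\tfrac{1}{[A][B]}=\tfrac{1}{2[\kappa]}\bigl(\Psi(A)-\Psi(B)\bigr)$ for $B/A=\kappa$, with $\Psi(x)=(x^{1/2}+x^{-1/2})/[x]$, collapses the inner sum over $l$ to boundary terms $u_i^{\pm}$, and the bookkeeping relation $u_{i+1}^-=u_i^+$ (which I checked: both equal $q\prod_{j\le i}\kappa_j^{r_j/2}\prod_{j>i}\kappa_j^{-r_j/2}$) collapses the outer sum over $i$, landing on $u_4^+=q\kappa_{\overline r}^{1/2}$ and $u_1^-=q\kappa_{\overline r}^{-1/2}$ with all signs coming out right. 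Your route buys a fully self-contained, computer-free proof of the combinatorial identity and makes the telescoping structure (and hence the reason the answer depends only on $\kappa_{\overline r}$) transparent; the paper's induction is shorter to state but outsources the base case to symbolic computation. One cosmetic remark: the identity $[xy][xy^{-1}]=[x]^2-[y]^2$ that you advertise is never actually used in your argument — only the cotangent-type identity is needed.
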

\begin{proof}
    By Theorem \ref{thm:factorization} and taking plethystic logarithm, i.e. the inverse of the plethystic exponential $\Exp$, we just need show that 
    \begin{align}\label{eqn: first eqn}
           \sum_{i=1}^4\sum_{l=1}^{r_i}\frac{[t_{i_1}t_{i_2}][t_{i_1}t_{i_3}][t_{i_2}t_{i_3}]}{[t_{i_1}][t_{i_2}][t_{i_3}]}\frac{1}{[\kappa_i^{\frac{1}{2}} q_{il}][\kappa_i^{\frac{1}{2}}q^{-1}_{il}]}=-\frac{[t_1t_2][t_1t_3][t_2t_3]}{[t_1][t_2][t_3][t_4]}\frac{[\kappa_{\overline{r}}]}{[\kappa_{\overline{r}}^{\frac{1}{2}} q][\kappa_{\overline{r}}^{\frac{1}{2}}q^{-1}]},
    \end{align}
    where  $\{i_1, i_2, i_3\}$ are the three indices in $\{1,2,3,4\}$ different from $i$ and   we define 
    \[
    q_{il}=q\kappa_i^{\frac{-r_i-1}{2}+l}\prod_{j=1}^4\kappa_j^{\frac{r_j\cdot\sgn(i-j)}{2}}.
    \]
    By the relations $t_1t_2t_3t_4=1$ and $\kappa_i=t_i^{-1}$, the claim \eqref{eqn: first eqn} is equivalent to
    \begin{align}\label{eqn: second eqn}
         \sum_{i=1}^4\sum_{l=1}^{r_i}\frac{[\kappa_i]}{[\kappa_i^{\frac{1}{2}} q_{il}][\kappa_i^{\frac{1}{2}}q^{-1}_{il}]}=\frac{[\kappa_{\overline{r}}]}{[\kappa_{\overline{r}}^{\frac{1}{2}} q][\kappa_{\overline{r}}^{\frac{1}{2}}q^{-1}]}.
    \end{align}
    We prove a slightly more general statement: let $x_1,\dots, x_r$ be variables, and let $q_i=q\prod_{j=1}^rx_j^{\frac{\sgn(i-j)}{2}}$. Then we claim that 
    \begin{align}\label{eqn: third eqn}
     \sum_{i=1}^r\frac{[x_i]}{[x_i^{\frac{1}{2}} q_{i}][x_i^{\frac{1}{2}}q^{-1}_{i}]}=\frac{[\prod_{i=1}^rx_i]}{[\prod_{i=1}^rx_i^{\frac{1}{2}} q][\prod_{i=1}^rx_i^{\frac{1}{2}}q^{-1}]}.
    \end{align}
    Clearly, \eqref{eqn: second eqn} follows by \eqref{eqn: third eqn} by setting $x_1,\dots, x_{r_1}=\kappa_1, \dots, x_{r_1+r_2+r_3+1}=\dots = x_r=\kappa_4$. 
    
    We prove \eqref{eqn: third eqn} by induction on $r$. If $r=1$, the claim is trivial, and if $r=2$, it is easy to check it holds. So let $r\geq 3$ and assume it holds for $r-2$. For $a=1, \dots, r-2$ set
    \begin{align*}
        y_a&=x_{a+1},\\
        p&=qx_1^{\frac{1}{2}}x_r^{-\frac{1}{2}},\\
        p_a&=p\prod_{b=1}^{r-2}y_b^{\frac{\sgn(a-b)}{2}}.
    \end{align*}
    Then we have
    \begin{align*}
         \sum_{i=1}^r\frac{[x_i]}{[x_i^{\frac{1}{2}} q_{i}][x_i^{\frac{1}{2}}q^{-1}_{i}]}&= \sum_{a=1}^{r-2}\frac{[y_a]}{[y_a^{\frac{1}{2}} p_{a}][y_a^{\frac{1}{2}}p^{-1}_{a}]}+\frac{[x_1]}{[x_1^{\frac{1}{2}} q_{1}][x_1^{\frac{1}{2}}q^{-1}_{1}]}+\frac{[x_r]}{[x_r^{\frac{1}{2}} q_{r}][x_r^{\frac{1}{2}}q^{-1}_{r}]}\\
         &=\frac{[\prod_{a=1}^{r-2}y_a]}{[\prod_{a=1}^{r-2}y_a^{\frac{1}{2}} p][\prod_{a=1}^{r-2}y_a^{\frac{1}{2}}p^{-1}]}+\frac{[x_1]}{[x_1^{\frac{1}{2}} q_{1}][x_1^{\frac{1}{2}}q^{-1}_{1}]}+\frac{[x_r]}{[x_r^{\frac{1}{2}} q_{r}][x_r^{\frac{1}{2}}q^{-1}_{r}]},
    \end{align*}
    where in the last step we used the inductive step. Notice, in particular, that the last expression just depends on $x_1, x_r, \tilde{x}=\prod_{i=2}^{r-1}x_i, q$, and therefore  \eqref{eqn: third eqn} reduces to 
    \begin{align*}
        \frac{[\prod_{a=1}^{r-2}y_a]}{[\prod_{a=1}^{r-2}y_a^{\frac{1}{2}} p][\prod_{a=1}^{r-2}y_a^{\frac{1}{2}}p^{-1}]}+\frac{[x_1]}{[x_1^{\frac{1}{2}} q_{1}][x_1^{\frac{1}{2}}q^{-1}_{1}]}+\frac{[x_r]}{[x_r^{\frac{1}{2}} q_{r}][x_r^{\frac{1}{2}}q^{-1}_{r}]}=\frac{[\prod_{i=1}^rx_i]}{[\prod_{i=1}^rx_i^{\frac{1}{2}} q][\prod_{i=1}^rx_i^{\frac{1}{2}}q^{-1}]},
    \end{align*}
    which is a formal identity depending on the four variables $x_i, x_r ,\tilde{x}, q$, and can therefore be directly checked by comparing left-hand-side and right-hand-side\footnote{To be precise, we checked that this identity holds with the help of Mathematica.}.
\end{proof}
\begin{corollary}
    Let $\overline{r}=(r,r,r,r)$. Then for $n>0$ there is a vanishing
    \[
     \CZ_{\overline{r},n}=0.
    \]
\end{corollary}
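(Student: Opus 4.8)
The plan is to read off the vanishing directly from the closed formula of Theorem~\ref{thm: explicit expression inv}, specialised to $\overline{r}=(r,r,r,r)$. The first step is to compute the weight $\kappa_{\overline{r}}=\prod_{i=1}^4\kappa_i^{r_i}$ in this case. Since $\kappa_i=t_i^{-1}$, one finds
\[
\kappa_{\overline{r}}=\prod_{i=1}^4 t_i^{-r}=(t_1t_2t_3t_4)^{-r}=1,
\]
where the last equality is the Calabi-Yau relation $t_1t_2t_3t_4=1$ that holds in our coefficient ring. Note also that the total degree appearing in Theorem~\ref{thm: explicit expression inv} is $\sum_{i=1}^4 r_i=4r$, so the sign $(-1)^{4r}=1$ and the shift $q\mapsto(-1)^{4r}q$ is trivial.

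Next I would substitute $\kappa_{\overline{r}}=1$ into the argument of the plethystic exponential. The numerator is $[\kappa_{\overline{r}}]=[1]=1^{\frac12}-1^{-\frac12}=0$, while the denominator specialises to
\[
[\kappa_{\overline{r}}^{\frac12}q][\kappa_{\overline{r}}^{\frac12}q^{-1}]=[q][q^{-1}]=-(q^{\frac12}-q^{-\frac12})^2,
\]
which is a nonzero Laurent expression in $q^{\frac12}$. The only point requiring care is that this substitution introduces no $0/0$ indeterminacy. To justify it I would expand the quotient as a power series in $q$ \emph{before} setting $\kappa_{\overline{r}}=1$: writing $\kappa=\kappa_{\overline{r}}$ and factoring
\[
[\kappa^{\frac12}q][\kappa^{\frac12}q^{-1}]=\kappa^{-\frac12}q^{-1}(\kappa^{\frac12}q-1)(\kappa^{\frac12}-q),
\]
the two geometric factors $\tfrac{1}{1-\kappa^{\frac12}q}$ and $\tfrac{1}{1-\kappa^{-\frac12}q}$ expand with coefficients $\kappa^{\pm k/2}$, so each coefficient of $q^k$ in the quotient is $[\kappa]$ times a Laurent polynomial in $\kappa^{\frac12}$ that is regular at $\kappa=1$. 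Hence every coefficient carries the factor $[\kappa]$ and therefore vanishes upon setting $\kappa=1$, so the entire argument of $\Exp$ is the zero power series.

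It then follows that $\CZ_{\overline{r}}(q)=\CZ_{\overline{r}}((-1)^{4r}q)=\Exp(0)=1$. Comparing with the expansion $\CZ_{\overline{r}}(q)=\sum_{n\geq 0}\CZ_{\overline{r},n}q^n$ forces $\CZ_{\overline{r},0}=1$ and $\CZ_{\overline{r},n}=0$ for all $n>0$, as claimed. The argument is essentially immediate from the explicit formula, and the only genuine obstacle is the bookkeeping of the previous paragraph that legitimises the naive specialisation $\kappa_{\overline{r}}=1$ by exhibiting the $q$-expansion coefficients as functions of $\kappa$ that are regular near $\kappa=1$ and uniformly divisible by $[\kappa]$.
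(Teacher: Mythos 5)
Your proof is correct and follows the same route as the paper: specialise Theorem \ref{thm: explicit expression inv} to $\overline{r}=(r,r,r,r)$, observe that the Calabi--Yau relation forces $\kappa_{\overline{r}}=1$ so that the numerator $[\kappa_{\overline{r}}]$ of the plethystic argument vanishes, and conclude $\CZ_{\overline{r}}(q)=\Exp(0)=1$. The extra bookkeeping you add to rule out a $0/0$ indeterminacy is harmless but not needed, since the denominator is manifestly nonzero at $\kappa_{\overline{r}}=1$.
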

\begin{proof}
    From $\overline{r}=(r,r,r,r)$ and $t_1t_2t_3t_4=1$ it follows that $\kappa_{\overline{r}}=1$. Therefore
    \[\frac{[\kappa_{\overline{r}}]}{[\kappa_{\overline{r}}^{\frac{1}{2}} q][\kappa_{\overline{r}}^{\frac{1}{2}}q^{-1}]}=0,
    \]
   and  by Theorem \ref{thm: explicit expression inv} it follows that the partition function of tetrahedron instantons is $ \CZ_{\overline{r}}(q)=1$.
\end{proof}
\begin{remark}
The vanishing of the instanton partition function for tuples of the form $(r,r,r,r)$ yields non-trivial relations among the coefficients of $\CZ^{(i)}(q)$. For instance, if we choose $\overline{r}=(1,1,1,1)$ and denote by $\CZ^{(i)}_{n} $ the  $n$-th coefficient  of  $\CZ^{(i)}(q)$, we obtain the relation
\[
t_1^{-\frac{1}{2}}\CZ^{(1)}_{1}+t_1^{-1}t_2^{-\frac{1}{2}}\CZ^{(2)}_{1}+t_1^{-1}t_2^{-1}t_3^{-\frac{1}{2}}\CZ^{(3)}_{1}+t_1^{-1}t_2^{-1}t_3^{-1}t_4^{-\frac{1}{2}}\CZ^{(4)}_{1}=0.
\]
This provides a new method to generate identities among classical rank 1 Donaldson-Thomas invariants  of $\BC^3$.
\end{remark}
\subsection{Cohomological limit}
Let $\overline{r}=(r_1, \dots, r_4)$ and $n\geq 0$. By Corollary \ref{cor: virtual classes}, the moduli space of tetrahedron instantons is endowed with a $\TT$-equivariant  \emph{virtual fundamental class} $[\CM_{\overline{r}, n}]^{\vir}\in A^\TT_{*}\left(\CM_{\overline{r}, n}, \BZ\left[\tfrac{1}{2}\right]\right)$. We define the $\TT$-equivariant invariants  
\begin{align*}\label{eqn: cohom invariants}
\CZ^{\coh}_{\overline{r},n}=\int_{[\CM_{\overline{r}, n}]^{\vir}}1 \in \frac{\BQ(s_1, s_2, s_3, s_4, v)}{(s_1+s_2+s_3+s_4)},
\end{align*}
where $s_1, \dots, s_4, v_{11}, \dots, v_{4r_4}$ are the generators of $\TT$-equivariant Chow cohomology $A^\TT_*(\pt)$. In other words, they can be realized as $s_i=c_1(t_i), v_{il}=c_i(w_{il})$, i.e.  the first Chern classes of the irreducible representations generating $K^0_\TT(\pt) $.

Since $\CM_{\overline{r}, n}$ is not proper, one cannot directly define  invariants via proper push-forward in equivariant cohomology. Instead,  since the fixed locus $\CM_{\overline{r}, n}^\TT $ is proper, we define invariants via $\TT$-equivariant residues on the fixed locus
\begin{align*}
   \int_{[\CM_{\overline{r}, n}]^{\vir}}1=\int_{[\CM^\TT_{\overline{r}, n}]^{\vir}}\frac{1}{\sqrt{e}(N^{\vir})},
\end{align*}
where $e(\cdot)$ is the ($\TT$-equivariant) Euler class and $ \sqrt{e}(\cdot)$ is the ($\TT$-equivariant) square-root Euler  class of Edidin-Graham \cite{EG_char_classes_quadratic_bundles} (cf. also \cite[Sec.~3.1]{OT_1}).


We define the \emph{cohomological tetrahedron instanton partition function} as the generating series
\begin{align*}
    \CZ^{\coh}_{\overline{r}}(q)=\sum_{n\geq 0} \CZ^{\coh}_{\overline{r},n}\cdot q^n \in \frac{\BQ(s_1, s_2, s_3, s_4, v)}{(s_1+s_2+s_3+s_4)}[\![q]\!].
\end{align*}

Therefore the tetrahedron partition function is given by
\begin{align*}
     \CZ^{\coh}_{\overline{r}}(q)=\sum_{\overline{\pi}}(-1)^{o_{\overline{\pi}}}\sqrt{e}(-T^{\vir}_{\overline{\pi}})\cdot q^{|\overline{\pi}|},
\end{align*}
where the sum runs over tuples $ \overline{\pi}=(\overline{\pi}_1, \dots, \overline{\pi}_4)$, where each $\overline{\pi}_i$ is an $r_i$-tuple of plane partitions,  $T^{\vir}_{\overline{\pi}}$ is the virtual tangent bundle at the fixed point corresponding to $\overline{\pi}$ via the correspondence in Proposition \ref{prop: fixed locus reduced} and $(-1)^{o_{\overline{\pi}}}$ is the sign of the induced virtual fundamental class.

Similarly to the $K$-theoretic setting, we can compute the square root Euler class  of $T^{\vir}_{\overline{\pi}}$ using the square root $\mathsf{v}_{\overline{\pi}}$ and obtain
\begin{align*}
     \CZ^{\coh}_{\overline{r}}(q)=\sum_{\overline{\pi}}(-1)^{\sigma_{\overline{\pi}}}e(-\mathsf{v}_{\overline{\pi}})\cdot q^{|\overline{\pi}|}.
\end{align*}
We explicitly describe how the Euler class $e(\cdot)$ acts on a virtual $\TT$-representation.  For an irreducible representation $t_1^{\mu_1}\cdots t_4^{\mu_4}w_{11}^{\mu_{11}}\cdots w_{4r_4}^{\mu_{4r_4}}$, we have
\[e(t_1^{\mu_1}\cdots t_4^{\mu_4}w_{11}^{\mu_{11}}\cdots w_{4r_4}^{\mu_{4r_4}})=\mu_1s_1+\dots +\mu_4s_4+\mu_{11}v_{11}+\dots + \mu_{4r_4}v_{4r_4}, \]
and is linear $e(t^{\mu}\pm t^{\nu})=e(t^{\mu})e(t^{\nu})^{\pm 1}$, as long as $\nu$ is not the trivial weight. To ease the notation we set $\mu=(\mu_1, \dots, \mu_4, \mu_{11}, \dots, \mu_{4r_4})$ and $s=(s_1,\dots, s_4, v_{11}, \dots, s_{4r_4})$, and write $e(t^{\mu})=\mu\cdot s$ where the product is the usual scalar product.

As explained in \cite[Sec.~7.1]{FMR_higher_rank}, one should think of $e(\cdot)$ as the \emph{linearization} of $[\cdot]$, since
\begin{align*}
    [t^{\mu}]|_{t_i=e^{bs_i}, w_{il}=e^{bv_{il}}}=e^{\frac{b\mu\cdot s}{2}}(1-e^{-b\mu\cdot s})=be(t^\mu)+O(b^2).
\end{align*}
In particular, if $V$ is a $\TT$-representation of rank 0, the limit $b\to 0$ is well-defined and yields
\begin{align}\label{eqn: lim K to cohom}
    \lim_{b\to 0}[V]|_{t_i=e^{bs_i}, w_{il}=e^{bv_{il}}}=e(V).
\end{align}
Since the vertex term  $\mathsf{v}_{\overline{\pi}} $ has rank 0, this identity tells us that the cohomological tetrahedron instanton partition function $ \CZ^{\coh}_{\overline{r}}(q) $ is a limit\footnote{Alternatively, we could show that the cohomological invariants are the limit of the $K$-theoretic invariants using virtual  Riemann-Roch \cite[Thm. 6.1]{OT_1} (cf.~\cite[Thm.~6.4]{CKM_crepant}).  } of the tetrahedron instanton partition function $ \CZ_{\overline{r}}(q)$.
\begin{corollary}\label{cor: cohom}
     Let $\overline{r}=(r_1, \dots, r_4)$  and $r=\sum_{i=1}^4r_i$. We have 
     \[\CZ^{\coh}_{\overline{r}}(q)=\mathrm{M}((-1)^rq)^{-\frac{(s_1+s_2)(s_1+s_3)(s_2+s_3)(r_1s_1+r_2s_2+r_3s_3 + r_4s_4)}{s_1s_2s_3s_4}},\]
     where $\mathrm{M}(q)$ is the MacMahon series \eqref{eqn: MacMahon}.
\end{corollary}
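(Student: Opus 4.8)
The plan is to derive the formula as the cohomological ($b\to 0$) limit of the explicit $K$-theoretic expression of Theorem \ref{thm: explicit expression inv}. As explained just before the statement, the vertex terms $\mathsf v_{\overline\pi}$ have rank $0$, so the limit \eqref{eqn: lim K to cohom} applies coefficient-wise in $q$ and gives
\[
\CZ^{\coh}_{\overline r}(q)=\lim_{b\to 0}\CZ_{\overline r}(q)\big|_{t_i=e^{bs_i},\,w_{il}=e^{bv_{il}}}.
\]
Since the coefficient of each $q^N$ involves only finitely many plane partitions (equivalently, only the Adams summands with $n\le N$ contribute to the plethystic exponential of Theorem \ref{thm: explicit expression inv}, whose argument is $O(q)$), this limit commutes with $\Exp$. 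It therefore suffices to compute, for each $n\ge 1$, the $b\to 0$ limit of the $n$-th plethystic summand, that is, of the argument of $\Exp$ after the substitution $t_i\mapsto t_i^n$, $q\mapsto q^n$, and then re-exponentiate.

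First I would set $t_i=e^{bs_i}$, so that $\kappa_i=t_i^{-1}=e^{-bs_i}$ and $\kappa_{\overline r}=\prod_i\kappa_i^{r_i}=e^{-bR}$ with $R:=r_1s_1+r_2s_2+r_3s_3+r_4s_4$, and expand each bracket through $[e^{bx}]=2\sinh(bx/2)\sim bx$. Under $t_i\mapsto t_i^n$, $q\mapsto q^n$ the numerator brackets $[t_1^nt_2^n],[t_1^nt_3^n],[t_2^nt_3^n]$ and $[\kappa_{\overline r}^n]$ contribute $nb(s_1+s_2)$, $nb(s_1+s_3)$, $nb(s_2+s_3)$ and $-nbR$ respectively, while the denominator brackets $[t_i^n]$ contribute $nbs_i$. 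The crucial observation is that the four powers of $nb$ cancel between numerator and denominator --- this is precisely the rank-$0$ property in action --- so the limit is finite. Simultaneously the two $q$-brackets satisfy $[\kappa_{\overline r}^{n/2}q^{\pm n}]\to \pm(q^{n/2}-q^{-n/2})$, whose product is $-(q^{n/2}-q^{-n/2})^2$. Keeping track of signs, the $n$-th summand tends to
\[
\frac{C}{(q^{n/2}-q^{-n/2})^2},\qquad C:=-\frac{(s_1+s_2)(s_1+s_3)(s_2+s_3)R}{s_1s_2s_3s_4}.
\]

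It then remains to recognise $\sum_{n\ge 1}\tfrac1n(q^{n/2}-q^{-n/2})^{-2}$ as $\log\mathrm M(q)$. Expanding \eqref{eqn: MacMahon} gives $\log\mathrm M(q)=\sum_{k\ge 1}\tfrac1k\sum_{m\ge 1}m\,q^{mk}=\sum_{k\ge 1}\tfrac1k\tfrac{q^k}{(1-q^k)^2}$, and the elementary identity $\tfrac{q^k}{(1-q^k)^2}=(q^{k/2}-q^{-k/2})^{-2}$ matches the two series termwise. Re-exponentiating then yields $\CZ^{\coh}_{\overline r}((-1)^rq)=\mathrm M(q)^{C}$, and the substitution $q\mapsto(-1)^rq$ produces the stated formula. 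I expect the only genuine obstacle to be the careful handling of the $b\to 0$ limit through $\Exp$: one must check both the cancellation of the $(nb)^4$ factors, which guarantees a finite limit, and that the limit may be pulled inside $\Exp$ coefficient-wise in $q$. A conceptually cleaner but technically heavier alternative would be to deduce the cohomological invariants directly from the $K$-theoretic ones via the virtual Riemann--Roch theorem \cite[Thm.~6.1]{OT_1}, as indicated in the footnote to the statement.
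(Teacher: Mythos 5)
Your proposal is correct and follows essentially the same route as the paper: one takes the $b\to 0$ limit of the explicit plethystic formula of Theorem \ref{thm: explicit expression inv} under $t_i=e^{bs_i}$, using the rank-$0$ property of the vertex terms to justify the limit \eqref{eqn: lim K to cohom}, observes the cancellation of the powers of $nb$ in each Adams summand, and identifies $\sum_{n\geq 1}\tfrac{1}{n}\tfrac{q^n}{(1-q^n)^2}$ with $\log \mathrm{M}(q)$. Your added care about commuting the limit with $\Exp$ coefficient-wise in $q$ is a harmless elaboration of what the paper does implicitly, and your signs and the final substitution $q\mapsto(-1)^rq$ all check out.
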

\begin{proof}
    By the limit \eqref{eqn: lim K to cohom}, we can compute the cohomological instanton partition function as
    \[
    \CZ^{\coh}_{\overline{r}}(q)=\lim_{b\to 0}\CZ_{\overline{r}}(q)|_{t_i=e^{bs_i}}.
    \]
    Notice in particular that the independence of $\CZ_{\overline{r}}(q)$ from the framing parameters $w_{il}$  directly implies the independence of $ \CZ^{\coh}_{\overline{r}}(q) $ from the variables $v_{il}$. Set $\overline{r}\cdot s=r_1s_1+r_2s_2+r_3s_3 + r_4s_4$. We have
    \begin{align*}
        \lim_{b\to 0}\CZ_{\overline{r}}(q)|_{t_i=e^{bs_i}}&= \lim_{b\to 0}\exp\left(\sum_{n\geq 1}\frac{1}{n}\frac{[e^{bns_1}e^{bns_2}][e^{bns_1}e^{bns_3}][e^{bns_2}e^{bns_3}]}{[e^{bns_1}][e^{bns_2}][e^{bns_3}][e^{bns_3}]}\frac{[e^{bn\overline{r}\cdot s}]}{[e^{\frac{bn\overline{r}\cdot s}{2}} q][e^{\frac{bn\overline{r}\cdot s}{2}}q^{-1}]}\right)\\
        &= \exp \left(\frac{(s_1+s_2)(s_1+s_3)(s_2+s_3)(r_1s_1+r_2s_2+r_3s_3 + r_4s_4)}{s_1s_2s_3s_4}\sum_{n\geq 1}\frac{1}{n}\frac{-q^n}{(1-q^n)^2} \right).
    \end{align*}
    Recall the plethystic expression for the MacMahon series
\[
\mathrm{M}(q)=\Exp\left(\frac{q}{(1-q)^2} \right),
\]
by which we conclude that 
    \[
      \lim_{b\to 0}\CZ_{\overline{r}}(q)|_{t_i=e^{bs_i}}=\mathrm{M}((-1)^rq)^{-\frac{(s_1+s_2)(s_1+s_3)(s_2+s_3)(r_1s_1+r_2s_2+r_3s_3 + r_4s_4)}{s_1s_2s_3s_4}}.
    \]
\end{proof}
\subsection{Witten genus}\label{sec: witten}
For a scheme endowed with a perfect obstruction theory in the sense of Behrend-Fantechi \cite{BF_normal_cone}, the authors \cite{FMR_higher_rank} introduced the \emph{virtual chiral elliptic genus},
inspired by the work of Benini-Bonelli-Poggi-Tanzini in String Theory \cite{BBPT_elliptic_DT}. In the context of \cite{FMR_higher_rank}, the \emph{chirality} refers to the novel feature of  extracting square roots of the virtual canonical bundle, as opposed to the more classical \emph{virtual elliptic genus} of Fantechi-Göttsche \cite{FG_riemann_roch}. From the complex-analytic point of view, the virtual chiral elliptic genus is the virtual analogue of the \emph{Witten genus}, which refines the $\widehat{A}$-genus (cf.~\cite{Witten_elliptic}), while the (virtual) elliptic genus refines the (virtual) $\chi_y$-genus.

More recently, for a moduli space of sheaves on a Calabi-Yau fourfold, Bojko \cite[Def. 5.8]{Bojko_wall-crossing} defined the  \emph{DT4 Witten genus}, which can be seen as the generalization of the virtual chiral elliptic genus to virtual structures à la Oh-Thomas. We adapt his notion to our setting to define the  \emph{virtual Witten genus} of the moduli space of tetrahedron instantons.

For a vector bundle $F$ on a scheme $X$, define the \emph{total symmetric power} and \emph{total exterior power}
\begin{align*}
    \Sym^\bullet_{p}F&=\bigoplus_{n\geq 0}p^n\Sym^n F\in K^0(X)[\![p]\!],\\
     \Lambda^\bullet_{p}F&=\bigoplus_{n\geq 0}p^n\Lambda^n F\in K^0(X)[p],
\end{align*}
and extend them by linearity to $K^0(X)$.  They satisfy
\begin{align*}
      \Sym^\bullet_{p}F=\frac{1}{\Lambda^\bullet_{-p}F}.
\end{align*}
\begin{definition}\label{def:elliptic_invariant}
    Let $\overline{r}=(r_1, \dots, r_4)$. The \emph{virtual Witten genus} of $\CM_{\overline{r}, n} $ is defined as
    \[
    \CZ^{\mathrm{ell}}_{\overline{r}, n}=\chi\left(\CM_{\overline{r}, n}, \widehat{\oO}_{\CM_{\overline{r}, n}}^{\vir}\otimes \bigotimes_{n\geq 1} \Sym^\bullet_{p^n}\left(T^{\vir}_{\CM_{\overline{r}, n}}\right) \right)\in 
     \frac{\BQ(t_1^{ \frac{1}{2}}, t_2^{ \frac{1}{2}}, t_3^{ \frac{1}{2}}, t_4^{ \frac{1}{2}}, w^{ \frac{1}{2}})}{(t_1t_2t_3t_4-1)}[\![p]\!].\]
\end{definition}
We refer to $p$ as the \emph{elliptic parameter} and define the \emph{elliptic tetrahedron instanton partition function} as the generating series
\begin{align*}
    \CZ^{\mathrm{ell}}_{\overline{r}}(q)=\sum_{n\geq 0} \CZ^{\mathrm{ell}}_{\overline{r},n}\cdot q^n \in \frac{\BQ(t_1^{ \frac{1}{2}}, t_2^{ \frac{1}{2}}, t_3^{ \frac{1}{2}}, t_4^{ \frac{1}{2}}, w^{ \frac{1}{2}})}{(t_1t_2t_3t_4-1)}[\![p,q]\!].
\end{align*}
As for the case of $K$-theoretic invariants, the moduli space $ \CM_{\overline{r}, n}$ is not proper, but its torus fixed locus $\CM_{\overline{r}, n}^\TT$ is, so we define the invariants via equivariant residues \eqref{def: inv by loc def}.

 The \emph{Jacobi theta function} $\theta(p;y)$ and the \emph{Dedekind eta function} $\eta(p)$ are defined as
 \begin{align*}
    \theta(p;y)&=-ip^{1/8}(y^{1/2}-y^{-1/2})\prod_{n=1}^\infty(1-p^n)(1-yp^n)(1-y^{-1}p^n), \\
    \eta(p)&=p^{\frac{1}{24}}\prod_{n=1}^\infty(1-p^n),
 \end{align*}
 and denote $\theta(\tau|z):=\theta(e^{2\pi i\tau};e^{2\pi iz})$. If we set $p=e^{2\pi i \tau}$, with $\tau\in \mathbb{H}=\set{\tau\in \BC | \mathrm{Im}(\tau)>0}$, then  $\theta$ enjoys the  modular behaviour 
\[
\theta(\tau|z+a+b\tau)=(-1)^{a+b}e^{-2\pi ibz}e^{-i\pi b^2\tau}\theta(\tau|z),\quad a,b\in\BZ.
\]
See \cite[Sec.~8.1]{FMR_higher_rank} and \cite[Sec.~6]{FG_riemann_roch} for a related discussion on the modularity of these functions.   For an irreducible $\TT$-representation $t^\mu$,  define (cf.~\cite[Sec.~8.1]{FMR_higher_rank})
     \[\theta[t^{\mu}]=  (i\cdot \eta(p))^{-1}\theta(p;t^\mu)\in  K^0_\TT(\pt)\llbracket p  \rrbracket[p^{\pm \frac{1}{12}}]\] 
and extend it by linearity to any  $V\in K_\TT^0(\pt)$. Notice that $\theta[t^{\mu}] $ satisfies $\theta[t^{-\mu}]=-\theta[t^{\mu}]$ and if $V$ is a virtual representation such that $\rk V=0$, the elliptic measure refines both $[\cdot]$ and $e(\cdot)$
\[
\theta[V]\xrightarrow{p\to 0} [V]\xrightarrow{b\to 0} e(V).
\]
Let $\mathsf{v}_{\overline{\pi}}$ our preferred  square root of the virtual tangent space  $T^{\vir}_{\overline{\pi}}$ at the $\TT$-fixed point corresponding to $\overline{\pi}$ and recall that the sign appearing in the localisation satisfies $ (-1)^{\sigma_{\overline{\pi}}}=1$. By Oh-Thomas localisation \cite{OT_1}, we have
\begin{align*}
     \CZ^{\mathrm{ell}}_{\overline{r}}(q)&=\sum_{\overline{\pi}}\frac{ \prod_{n\geq 1} \Sym^\bullet_{p^n}\left(T^{\vir}_{\overline{\pi}}\right)}{\sqrt{\mathfrak{e}}(T^{\vir}_{\overline{\pi}})}
     \cdot q^{|\overline{\pi}|}\\
     &=\sum_{\overline{\pi}}\frac{ \prod_{n\geq 1} \Sym^\bullet_{p^n}\left(\mathsf{v}_{\overline{\pi}}+\overline{\mathsf{v}_{\overline{\pi}}} \right)}{\mathfrak{e}(\mathsf{v}_{\overline{\pi}})\cdot ({\det}\mathsf{v}_{\overline{\pi}})^{\frac{1}{2}}}
     \cdot q^{|\overline{\pi}|}\\
     &=\sum_{\overline{\pi}}\theta[-\mathsf{v}_{\overline{\pi}}] \cdot q^{|\overline{\pi}|},
\end{align*}
where the last line follows from the explicit computation in \cite[pag. 34]{FMR_higher_rank}.

\begin{remark}\label{rmk: failure witten}
    The elliptic tetrahedron instanton partition function $ \CZ^{\mathrm{ell}}_{\overline{r}}(q)$ does in general depend on the framing parameters $w_{il}$, differently from its $K$-theoretical and cohomological analogues, where a rigidity principle applies. See \cite[Ex. 8.7]{FMR_higher_rank} for the case of $r=(0,0,0,3)$ and $n=1$.
\end{remark}
\begin{remark}\label{rem: rigidity witten genus}
   We discussed in  \cite[Sec. 8.3]{FMR_higher_rank} --- in the context of higher rank Donaldson-Thomas theory --- how specialising the equivariant parameters to roots of unity yields, once more, a rigidity phenomenon for which the dependence on the elliptic parameter disappears and the partition function is computed explicitly \cite[Thm. 8.11]{FMR_higher_rank}. By computational evidence, we noticed a similar phenomenon and expect it to  hold for the elliptic tetrahedron instanton partition function, where one suitably specialises the equivariant parameters $t_1, \dots, t_4$ to $r_i$-roots of unity. See \cite[Rem. 8.12]{FMR_higher_rank} for its physical role from the point of view of supersymmetry.
\end{remark}

\subsection{Pomoni-Yan-Zhang} \label{sec: PYZ}
As we pointed out in \S\,\ref{sec:intro tetrahedra}, our work aims to give a geometrisation of the moduli spaces originally considered in \cite{PYZ_tetrahedron}, to get a better understanding of their structure and the relation they bear to enumerative theories already extensively studied. In loc.~cit., the authors gave a description of a moduli space of BPS solutions to the D1-brane low-energy effective theory of a system of D1-D7-branes on the type IIB supersymmetric background $X=\BR^{1,1}\times\BR^8$. In particular, a stack of $n$ coincident D1-branes probes a configuration $r=r_1+r_2+r_3+r_4$ of intersecting D7-branes, where each stack of $r_i$ coincident D7-branes wraps an irreducible component of the boundary toric divisor $\Delta\subset\BC^4\cong\BR^8$. The resulting low-energy effective theory on the D1-branes is a so-called $2d$ $\mathcal N=(0,2)$ quiver Gauged Linear Sigma Model, whose partition function can be explicitly computed by means of supersymmetric localisation as in \cite{Benini-elliptic-genus-rank-1,Benini-elliptic-genus-rank-r}. The advantage of this procedure is that it does not require an in-depth understanding of the moduli space at hand, as the partition function of the theory is only dependent on the field content (which is entirely encoded in the datum of the quiver and possibly the relations), and it is expressed in terms of so-called Jeffrey-Kirwan residues \cite{JK}, via a well-defined procedure. In this section, we will briefly describe the constructions in loc.~cit., and explain how the connection to geometry is drawn.

In \cite[Sec. 2]{PYZ_tetrahedron}, the moduli space of tetrahedron instantons is introduced via the study of vacua of the low-energy theory of the stack of D1-branes in the background of the D7-branes. In particular, the effective theory has a family of classical vacua, which are characterised by the vanishing of the scalar potential and are parametrised by the symplectic quotient by the unitary group $\U_n$
\[
\mathfrak M_{\overline r,n}(\zeta)\defeq\mu_{\BR}^{-1}(\zeta)\cap\mu_{\BC}^{-1}(0)\cap\sigma^{-1}(0)/\U_n,\quad \zeta\in\mathfrak u_n^\ast,
\]
where the moment map $\mu_\BR:\mathsf R_{\overline r,n}\to\mathfrak u_n$ is defined by
\[
\mu_{\BR}(B_1,\dots,B_4,I_1,\dots,I_4)=\frac{\sqrt{-1}}{2}\left(\sum_{a=1}^4[B_a,B_a^\dagger]+\sum_{i=1}^4I_iI_i^\dagger\right),
\]
while the maps $\mu_{\BC}=(\mu_{\BC})_{a,b}$, for $a,b=1,\dots,4$ and $\sigma=(\sigma)_i$, for $i=1,\dots,4$, act on representations in $\mathsf R_{\overline r,n}$ as
\begin{align*}
    \mu_{\BC}(B_1,\dots,B_4,I_1,\dots,I_4)&=\left([B_a,B_b]\right)_{a,b=1,\dots,4},\\
    \sigma(B_1,\dots,B_4,I_1,\dots,I_4)&=\left(B_iI_i\right)_{i=1,\dots,4}.
\end{align*}
By Theorem \ref{thm: isotropic construction} and \cite[Cor.~3.22]{Nak_lectures_Hilb_schemes}, there is a bijection between the symplectic quotient $\mathfrak M_{\overline r,n}(\sqrt{-1}d\chi)$ and   (the closed points of) the Quot scheme $\Quot_\Delta(\mathcal E,n)$, the latter being realised as a GIT quotient with respect to the $\GL_n$-character $\chi$. Moreover, in \cite[\S\,4.3.2, \S\,4.3.3]{PYZ_tetrahedron}, a stratification of the one-instanton moduli space $\CM_{\overline r,1}$ is obtained from the gauge-theoretic description derived from physics. This stratification agrees with our Example~\ref{example: blow up}, where it is also interpreted geometrically in terms of blow-ups.

The $n$-instantons contribution to the partition function of the two-dimensional low-energy theory on the worldvolume of the D1-branes is then captured by a refinement of the Witten index. This is known in physics as \textit{elliptic genus}, and is defined as in \cite[Eq.~(163)]{PYZ_tetrahedron}. Notably, this refined index can be computed exactly for two-dimensional Gauged Linear Sigma Models via supersymmetric localisation (cf.~\cite[Eq.~(3.7)]{Benini-elliptic-genus-rank-1} for rank-1 gauge groups and \cite[Eq.~(2.27)]{Benini-elliptic-genus-rank-r} for general rank), and its evaluation involves the computation of certain equivariant residues, the Jeffrey-Kirwan residues \cite{JK}, which are only dependent on the (bosonic) field content of the theory through their 1-loop contribution to the path integral formulation of the elliptic genus. When the moduli space engineered by the BPS vacua of the $2d$ $\mathcal N=(0,2)$ Gauged Linear Sigma Model is equipped with a perfect obstruction theory in the sense of Li-Tian \cite{LT_virtual_cycle} and Behrend-Fantechi \cite{BF_normal_cone}, the physical elliptic genus has a geometric realisation as the \textit{(virtual) chiral elliptic genus}, first computed in the context of DT theory in \cite{BBPT_elliptic_DT}, and later defined in \cite{FMR_higher_rank}. In the case of tetrahedron instantons, however, a perfect obstruction theory is not available, but virtual structures \`a la Oh-Thomas are induced by the zero-locus construction of Thm.~\ref{thm: isotropic construction}. Then the elliptic genus computed in \cite[Eqs.~(211)-(214)]{PYZ_tetrahedron} is recovered exactly from our computation of the virtual Witten genus in \S\,\ref{sec: witten}. In this sense, one might view supersymmetric localisation as nothing but an infinite-dimensional analogue of equivariant localisation. However, despite the undoubted advantage that an in-depth knowledge of the structure of the moduli space of BPS vacua is not required for the calculation of the elliptic invariants in Def.~\ref{def:elliptic_invariant}, carried out as in \cite[\S\,5]{PYZ_tetrahedron}, it nevertheless overshadows some interesting features which might be worth pointing out. Indeed, the fact that the virtual structure on $\CM_{\overline r,n}$ is induced, via the seminal work of Oh-Thomas \cite{OT_1}, by the construction in Thm.~\ref{thm: isotropic construction}, means that the invariants computed in K-theory by localisation will depend on a choice of orientation via a sign, cf.~\S\,\ref{sec:invariants}. This sign, however, is not manifestly present in \cite[\S\S\,5-6]{PYZ_tetrahedron}. Moreover, the choice of a square root of the virtual tangent bundle as in \S\,\ref{sec:invariants} is not unique, but the couple square root/sign is somewhat canonical (cf.~Thm.~\ref{thm: KR sign} and \cite{Mon_canonical_vertex}). Therefore, while the character $\chi_{\overline\pi}$ in \cite[Eq.~(147)]{PYZ_tetrahedron} is a good choice for a square root of the virtual tangent space at the points in the fixed locus $\CM_{\overline r,n}^{\TT}$, it is not the only possible one (a different choice being given by our vertex term $\mathsf v_{\overline\pi}$ in Eq.~\ref{eqn: vertex term}), and in the computation of the invariants \cite[Eq.~(153)]{PYZ_tetrahedron}, it should in principle be accompanied by the additional choice of an appropriate sign. Similarly, the observation in loc.~cit. that the partition function thereby computed is invariant only up to a sign under permutations of the $\kappa_i$, is a shadow of the same phenomenon.

\appendix
\section{The sign rule}\label{sec: app}
We provide in this appendix a proof of Theorem \ref{thm: correct sign body text}, which computes the correct sign for each localised contribution of the tetrahedron instantons partition function. Our proof relies on two results of Kool-Rennemo, whose proofs will appear in their forthcoming paper \cite{KR_magnificient}.\\

  Let a scheme $Z$
\[
\begin{tikzcd}
& \CE\arrow[d]\\
Z:=Z(s)\arrow[r, hook, "\iota"] &\CA\arrow[u, bend right, swap, "s"]
\end{tikzcd}
\]
be the zero locus of an isotropic section $s\in \Gamma(\CA,\CE)$, where $\CE$ is a $SO(2r, \BC)$-bundle over a smooth quasi-projective variety $\CA$, so that $Z$ is endowed with the obstruction theory \eqref{eqn: obs th}.
Assume that the orientation of $\CE$ is induced by the choice of a maximal isotropic subbundle  $\Lambda\subset \CE$, which induces a projection
\[
p_\Lambda:\CE^*\to \Lambda.
\]
 Let  $\CA$ be acted by an algebraic torus $\BT$, such that $(\CE,q)$ is $\BT$-equivariant and the $\BT$-action restricts to $Z$, which implies that the obstruction theory $\BE\to \BL_Z$ is $\BT$-equivariant and $Z$ is endowed with a $\BT$-equivariant virtual structure sheaf $\widehat{\oO}^{\vir}_Z$. Finally, we assume that the fixed locus $Z^\BT$ is reduced, zero-dimensional, and such that the fibre of the obstruction theory  $\BE|_p$ is $\BT$-movable, for every fixed point $p\in Z^\BT$. 
\begin{theorem}[{Kool-Rennemo \cite{KR_magnificient}}]\label{thm: KR sign}
    We have that 
    \begin{align*}
        \chi\left(Z, \widehat{\oO}^{\vir}_Z\right)=\sum_{p\in Z^\BT}(-1)^{\psi_p}[-T_{\CA}|_{p}+\Lambda|_p],
    \end{align*}
    where the sign is defined as
    \[
   \psi_p= \dim\left(\coker(T_{\CA}|_{p}\xrightarrow{(p_\Lambda\circ ds)|_p} \Lambda|_p)^{\fix} \right) \mod 2.
    \]
\end{theorem}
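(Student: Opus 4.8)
The plan is to make the Oh--Thomas $K$-theoretic virtual localisation formula of \cite{OT_1} explicit at each fixed point, the entire difficulty lying in the identification of the resulting sign with $(-1)^{\psi_p}$. First I would apply localisation to write
\[
\chi\left(Z,\widehat{\oO}^{\vir}_Z\right)=\sum_{p\in Z^\BT}\frac{\widehat{\oO}^{\vir}_{Z^\BT}\big|_p}{\sqrt{\mathfrak{e}}\left(N^{\vir}\big|_p\right)},
\]
which is licit because $Z^\BT$ is reduced and zero-dimensional. Since $\BE|_p$ is $\BT$-movable we have $\left(T^{\vir}_p\right)^{\fix}=0$, so that $N^{\vir}|_p=T^{\vir}_p$ is the whole virtual tangent space and $\widehat{\oO}^{\vir}_{Z^\BT}|_p=\pm\oO_p$; the problem thus reduces to evaluating $\sqrt{\mathfrak{e}}\left(-T^{\vir}_p\right)$ together with this fixed-locus orientation sign.

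Next I would produce the square root adapted to $\Lambda$. In $K^0_\BT(\pt)$ one has $T^{\vir}_p=T_\CA|_p-\CE|_p+T^*_\CA|_p$, and the defining short exact sequence $0\to\Lambda\to\CE\to\Lambda^*\to 0$ gives $\CE|_p=\Lambda|_p+\overline{\Lambda|_p}$. Hence
\[
T^{\vir}_p=\mathsf v_p+\overline{\mathsf v_p},\qquad \mathsf v_p:=T_\CA|_p-\Lambda|_p,
\]
is a square root compatible with the orientation that $\Lambda$ induces on $\CE$. Formula \eqref{eqn: square roots with brackets 2} then yields $\sqrt{\mathfrak{e}}\left(-T^{\vir}_p\right)=\varepsilon_p\,[-\mathsf v_p]=\varepsilon_p\,[\Lambda|_p-T_\CA|_p]$ for a sign $\varepsilon_p=\pm 1$, and the whole theorem is reduced to proving that the total sign $\varepsilon_p\cdot(-1)^{o_p}$ equals $(-1)^{\psi_p}$, where $(-1)^{o_p}$ is the orientation sign of $\widehat{\oO}^{\vir}_{Z^\BT}|_p$.

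To compute this sign I would localise it to the $\BT$-fixed part of $\CE$. The movable weights of $\CE|_p$ come in dual pairs and are split off unambiguously by $\Lambda^{\mov}|_p$, so they contribute to $\sqrt{\mathfrak{e}}$ with no orientation ambiguity; the sign is therefore measured entirely on the orthogonal bundle $\CE^{\fix}|_p$, oriented by the maximal isotropic $\Lambda^{\fix}|_p$. Now $Z^\BT=Z(s^{\fix})$ sits inside the smooth $\CA^\BT$, and the vanishing $(T^{\vir}_p)^{\fix}=0$ forces $\dim\CA^\BT=r^{\fix}:=\tfrac12\rk\CE^{\fix}$. Differentiating the identity $q(s,s)=0$ at $p$ shows that $\image(ds^{\fix}|_p)\subset\CE^{\fix}|_p$ is isotropic, and reducedness of $Z^\BT$ makes $ds^{\fix}|_p$ injective, so $L_s:=\image(ds^{\fix}|_p)$ is a maximal isotropic subspace of dimension $r^{\fix}$. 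One shows (this is one of the Kool--Rennemo inputs) that the orientation Oh--Thomas attach to the fixed point is the one induced by $L_s$, while the global orientation is induced by $\Lambda^{\fix}|_p$; since two maximal isotropics of an even orthogonal space induce the same orientation precisely when the codimension of their intersection is even, the total sign is $(-1)^{r^{\fix}-\dim(L_s\cap\Lambda^{\fix})}$. Finally I would identify this with $(-1)^{\psi_p}$ by computing $\coker(p_\Lambda\circ ds|_p)^{\fix}=\coker\bigl(p_\Lambda|_{L_s}\colon L_s\to\Lambda^{\fix}\bigr)$, whose dimension equals $\dim(L_s\cap\ker p_\Lambda)$, and converting this into the intersection count modulo $2$: writing $\CE^{\fix}=\Lambda^{\fix}\oplus\ker p_\Lambda$ as a sum of transverse maximal isotropics, the two-component structure of the orthogonal Grassmannian yields the cocycle identity $\dim(L_s\cap\Lambda^{\fix})+\dim(L_s\cap\ker p_\Lambda)\equiv r^{\fix}\pmod 2$, so that $\psi_p\equiv r^{\fix}-\dim(L_s\cap\Lambda^{\fix})\pmod 2$.

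The main obstacle I expect is not this parity identity, which is the essentially formal orthogonal-Grassmannian computation just sketched, but rather the reduction of the orientation sign to the fixed part: reconciling the sign conventions of the Oh--Thomas square-root Euler class and orientation (\cite[Def.~2.1, Sec.~5.1]{OT_1}) with the claim that the fixed point carries exactly the $L_s$-orientation and that the movable pairs contribute without ambiguity. This is where the two structural lemmas of Kool--Rennemo \cite{KR_magnificient} carry the weight; the remaining manipulations of the bracket $[\cdot]$ recalled above are routine.
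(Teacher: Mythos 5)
The first thing to say is that the paper contains no proof of this statement: Theorem \ref{thm: KR sign} is imported wholesale from the forthcoming work of Kool--Rennemo \cite{KR_magnificient}, and the appendix only \emph{applies} it (together with Proposition \ref{prop_ KR explciit sign}) to deduce Corollary \ref{cor: final sign}. So there is no internal argument to measure your proposal against; the relevant question is whether your outline stands on its own.

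As written it does not, because the two steps you flag as ``Kool--Rennemo inputs'' are precisely the substance of the theorem, so deferring them to \cite{KR_magnificient} makes the argument circular relative to what is being proved. Concretely: (i) the identification of the orientation that the Oh--Thomas localisation procedure induces at the fixed point with the orientation of $\CE^{\fix}|_p$ determined by $L_s=\image(ds^{\fix}|_p)$, and (ii) the claim that the movable part of $\CE|_p$ contributes $[\Lambda^{\mov}|_p]$ with no residual sign, together constitute the entire content of $(-1)^{\psi_p}$; the rest of your write-up (the square root $T_{\CA}|_p-\Lambda|_p$, the parity comparison of two maximal isotropics via the components of the orthogonal Grassmannian, the identification of $\dim\coker(p_\Lambda\circ ds|_p)^{\fix}$ with $\dim\bigl(L_s\cap\ker p_\Lambda\bigr)$) is the routine bookkeeping surrounding them. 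Point (ii) is not innocuous: $\Lambda^{\mov}|_p$ need not select one member of each dual pair of weight spaces of $\CE^{\mov}|_p$, and since $[t^{\mu}]=-[t^{-\mu}]$ two different ``halves'' of $\CE^{\mov}|_p$ give brackets differing by a sign, which must be reconciled with the conventions of \cite[Sec.~5.1]{OT_1}. Two smaller imprecisions: isotropy of $\image(ds|_p)$ follows from differentiating $q(s,s)=0$ \emph{twice} and evaluating where $s$ vanishes (a single derivative gives only $q(s,ds)=0$, which is vacuous at $p$); and the parity identity $\dim(L_s\cap\Lambda^{\fix})+\dim\bigl(L_s\cap\ker p_\Lambda\bigr)\equiv r^{\fix}\pmod 2$, while true, deserves a line of justification via the two components of the orthogonal Grassmannian of maximal isotropics. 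In short, your reduction of the theorem to (i) and (ii) is a sensible and probably accurate reconstruction of the intended argument, but as a proof it has the same status as the paper's: the hard part remains a citation.
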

Notice that the sign $(-1)^{\psi_p}$ in Theorem  \ref{thm: KR sign} depends on the chosen maximal isotropic subbundle $\Lambda$, which affects the orientation on $Z $ and therefore the one on the fixed locus $Z^\BT$.\\

The assumptions of Theorem \ref{thm: KR sign}  are clearly satisfied by the  $\TT$-action  on the moduli space of tetrahedron instantons $\CM_{\overline{r},n}$, whose orientation is induced by the maximal isotropic subbundle $ \Lambda \subset \CL$ as constructed in Section \ref{sec: zero locus}. Let $\overline{\pi}$ be a tuple of plane partitions  corresponding to a fixed point of $\CM_{\overline{r},n}^\TT$. Then
  \[
\CL|_{\overline{\pi}}=\Lambda^2\BC^4\otimes \Hom(Q_{\overline{\pi}}, Q_{\overline{\pi}})\oplus\bigoplus_{i=1}^4\Hom(K_i\cdot t_i, Q_{\overline{\pi}})\oplus \bigoplus_{i=1}^4\Hom(K_i\cdot t_i, Q_{\overline{\pi}})^*
   \]
is a vector space, endowed with a non-degenerate quadratic pairing, with maximal isotropic subbundle 
\begin{align*}
    \Lambda|_{\overline{\pi}}&=\tilde{\Lambda}|_{\overline{\pi}}\oplus \bigoplus_{i=1}^4\Hom(K_i\cdot t_i, Q_{\overline{\pi}})^*      \\
    \tilde{\Lambda}|_{\overline{\pi}}&=\left(\langle e_4\rangle\wedge \BC^3\right)\otimes \Hom(Q_{\overline{\pi}}, Q_{\overline{\pi}}),
\end{align*}
   and isotropic section
   \[
   s|_{\overline{\pi}}=(\tilde{s}|_{\overline{\pi}},0):\Lambda|_{\overline{\pi}}\to \CL|_{\overline{\pi}}.
   \]
   By Theorem \ref{thm: KR sign}, we have that 
      \begin{align*}
        \chi\left(\CM_{\overline{r},n}, \widehat{\oO}^{\vir}_{\CM_{\overline{r},n}}\right)=\sum_{\overline{\pi}}(-1)^{\psi_{\overline{\pi}}}[-T_{\CM_{\overline{r},n}^{\nc}}|_{\overline{\pi}}+\Lambda|_{\overline{\pi}}],
    \end{align*}
    where
    \begin{align*}
\psi_{\overline{\pi}}=\dim\left(\coker(T_{\CM_{\overline{r},n}^{\nc}}|_{p}\xrightarrow{(p_\Lambda\circ ds)|_{\overline{\pi}}} \Lambda|_{\overline{\pi}})^{\fix} \right) \mod 2.
    \end{align*}
    By Proposition \ref{prop: t movable} the vertex term $\mathsf{v}_{\overline{\pi}}$ is $\TT$-movable, which implies that the virtual tangent space $T^{\vir}_{\overline{\pi}}$ is $\TT$-movable as well. In particular, any other square root of $ T^{\vir}_{\overline{\pi}}$ has to be $\TT$-movable. Therefore,  $ T_{\CM_{\overline{r},n}^{\nc}}|_{\overline{\pi}}-\Lambda|_{\overline{\pi}}$ must be  a $\TT$-movable virtual representation.
\begin{lemma}\label{lemma: appendix}
    We have 
    \begin{align*}
        \psi_{\overline{\pi}}=\dim\left(\coker(T_{\CM_{\overline{r},n}^{\nc}}|_{p}\xrightarrow{(p_{\tilde{\Lambda}}\circ d\tilde{s})|_{\overline{\pi}}} \tilde{\Lambda}|_{\overline{\pi}} )^{\fix} \right) \mod 2.
    \end{align*}
\end{lemma}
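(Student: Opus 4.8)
The plan is to reduce the cokernel defining $\psi_{\overline{\pi}}$ from the full maximal isotropic $\Lambda|_{\overline{\pi}}=\tilde{\Lambda}|_{\overline{\pi}}\oplus\bigoplus_i\Hom(K_it_i,Q_{\overline{\pi}})^{*}$ down to its summand $\tilde{\Lambda}|_{\overline{\pi}}$, by showing that the framing summand $H_2:=\bigoplus_i\Hom(K_it_i,Q_{\overline{\pi}})^{*}$ contributes \emph{nothing} to the $\TT$-fixed part of the cokernel. First I would use that the pairing $q$ on $\CL$ is orthogonal for the splitting $\CL|_{\overline{\pi}}=A\perp(H_1\oplus H_2)$, where $A=\Lambda^2\BC^4\otimes\Hom(Q_{\overline{\pi}},Q_{\overline{\pi}})$ carries the wedge–trace form and $H_1\oplus H_2=\bigoplus_i\bigl(\Hom(K_it_i,Q_{\overline{\pi}})\oplus\Hom(K_it_i,Q_{\overline{\pi}})^{*}\bigr)$ is the hyperbolic part. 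Since $\Lambda=\tilde{\Lambda}\oplus H_2$ with $\tilde{\Lambda}\subset A$ maximal isotropic and $H_2$ the distinguished isotropic of the hyperbolic factor, a complementary isotropic $M=\tilde{\Lambda}'\oplus H_1$ can be chosen compatibly with this splitting, so that the projection $p_\Lambda$ is block-diagonal: its $\tilde{\Lambda}$-component factors through $A$ and its $H_2$-component through $H_1\oplus H_2$.

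Next I would invoke the structure of the section from the proof of Theorem \ref{thm: isotropic construction}, namely that $s$ has vanishing component along $H_2$. Consequently the $A$-component of $ds$ is $d\tilde{s}$, and by block-diagonality the $\tilde{\Lambda}$-component of $p_\Lambda\circ ds$ is exactly $p_{\tilde{\Lambda}}\circ d\tilde{s}$ (the $H_1$-part of $\tilde{s}$ being killed by $p_{\tilde{\Lambda}}$). Writing $p_\Lambda\circ ds=(p_{\tilde{\Lambda}}\circ d\tilde{s},\,\phi)$ with $\phi\colon T_{\CM^{\nc}}|_{\overline{\pi}}\to H_2$, I would apply the $\TT$-fixed-point functor to the right-exact sequence $T_{\CM^{\nc}}|_{\overline{\pi}}\to\tilde{\Lambda}\oplus H_2\to\coker(p_\Lambda\circ ds)\to 0$. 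Since $\TT$ is a torus, taking $\TT$-invariants is exact; using $H_2^{\fix}=0$ the middle term collapses to $\tilde{\Lambda}^{\fix}$, which identifies $\coker(p_\Lambda\circ ds)^{\fix}$ with $\coker(p_{\tilde{\Lambda}}\circ d\tilde{s})^{\fix}$ as vector spaces, a fortiori giving the claimed congruence modulo $2$.

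The crux — and the one genuinely computational step — is the vanishing $H_2^{\fix}=0$. As a $\TT$-character one has $H_2|_{\overline{\pi}}=\sum_{i=1}^4 K_it_i\,\overline{Q_{\overline{\pi}}}$; expanding $K_i=\sum_l w_{il}$ and $Q_{\overline{\pi}}=\sum_{i',l'}w_{i'l'}Z_{\pi_{i'l'}}$, the $\TT_1$-trivial terms force $(i,l)=(i',l')$, leaving $\sum_{i,l}t_i\,\overline{Z_{\pi_{il}}}$. Here the Calabi–Yau relation $t_i=(t_{i_1}t_{i_2}t_{i_3})^{-1}$ converts each monomial $t_i\,t_{i_1}^{-a}t_{i_2}^{-b}t_{i_3}^{-c}$ with $(a,b,c)\in\pi_{il}$ into $t_{i_1}^{-(a+1)}t_{i_2}^{-(b+1)}t_{i_3}^{-(c+1)}$, which is $\TT_0$-trivial only if $a=b=c=-1$, impossible for a plane partition.

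I expect the main obstacle to be bookkeeping rather than conceptual: once the orthogonal block-diagonality of $p_\Lambda$ and the vanishing $s|_{H_2}=0$ are in place, the vanishing of the framing fixed part is immediate from positivity of plane-partition coordinates together with the Calabi–Yau condition $t_1t_2t_3t_4=1$. The only point requiring care is justifying that the $\tilde{\Lambda}$-block of $p_\Lambda\circ ds$ is genuinely $p_{\tilde{\Lambda}}\circ d\tilde{s}$ independently of the precise identification $\CL\cong\CL^{*}$ used to define $p_\Lambda$; this follows because the identification and the projection both respect the orthogonal decomposition $A\perp(H_1\oplus H_2)$, so no cross-terms between the $A$-block and the hyperbolic block can arise.
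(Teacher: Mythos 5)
Your proposal is correct and follows essentially the same route as the paper: split off the framing summand $\bigoplus_i\Hom(K_it_i,Q_{\overline{\pi}})^{*}$ of $\Lambda|_{\overline{\pi}}$, use that the section has no component there, and kill its contribution to the fixed part via the $\TT_1$-weights (forcing diagonal terms) together with the Calabi--Yau relation and positivity of plane-partition coordinates. Your extra care with exactness of $\TT$-invariants (so that only $H_2^{\fix}=0$ is needed, not the vanishing of the $H_2$-block of the map) is a slightly more robust phrasing of the same argument, not a different one.
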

\begin{proof}
    The differential of the isotropic section satisfies $ds=(d\tilde{s}, 0)$, therefore 
    \begin{align*}
      \coker(T_{\CM_{\overline{r},n}^{\nc}}|_{p}\xrightarrow{(p_\Lambda\circ ds)|_{\overline{\pi}}} \Lambda|_{\overline{\pi}})=\coker(T_{\CM_{\overline{r},n}^{\nc}}|_{p}\xrightarrow{(p_{\tilde{\Lambda}}\circ d\tilde{s})|_{\overline{\pi}}} \tilde{\Lambda}|_{\overline{\pi}} )\oplus \bigoplus_{i=1}^4\Hom(K_i\cdot t_i, Q_{\overline{\pi}})^*.
    \end{align*}
   The proof follows by the vanishing of the fixed part
    \begin{align*}
        \left(\Hom(K_i\cdot t_i, Q_{\overline{\pi}})\right)^{\fix}&=\left(\overline{K_i}\kappa_iQ_{\overline{\pi}}\right)^{\fix}\\
        &= \left(\sum_{j=1}^4\sum_{\substack{1\leq l\leq r_i\\ 1\leq k\leq r_j}}w_{il}^{-1}w_{jk}\kappa_iZ_{\pi_{jk}}\right)^{\fix}\\
       &= \left(  \sum_{l=1}^{r_i}\kappa_i Z_{\pi_{il}} \right)^{\fix}\\
       &=0,
    \end{align*}
    since $Z_{\pi_{il}}$ is a polynomial in the variables $t_{i_1},t_{i_2},t_{i_3}$, where $\{i_1, i_2, i_3\}$ are the three indices in $\{1,2,3,4\}$ different from $i$.
\end{proof}
The vanishing in the  proof of Lemma \ref{lemma: appendix} readily implies that $T_{\CM_{\overline{r},n}^{\nc}}|_{\overline{\pi}}-\tilde{\Lambda}|_{\overline{\pi}}$ is a $\TT$-movable virtual representation.

For every tuple of plane partitions $\overline{\pi}$, define the Laurent polynomial 
\begin{align*}
   \tilde{\mathsf{v}}_{\overline{\pi}}=\overline{K}Q_{\overline{\pi}}-\overline{P_{123}}Q_{\overline{\pi}}\overline{Q_{\overline{\pi}}}.
\end{align*}
The vertex term $ \tilde{\mathsf{v}}_{\overline{\pi}} $ is the higher rank version of the vertex term of Cao-Kool-Monavari \cite[Eqn. (18)]{CKM_K_theoretic}, used in loc. cit. to compute the Donaldson-Thomas invariants of $\BC^4$, and is easily seen to be $\TT$-movable by \cite[Lemma 2.4]{Mon_canonical_vertex}. Kool-Rennemo \cite{KR_magnificient} exploited this vertex term to prove the closed formula for Donaldson-Thomas invariants of $\BC^4$ conjectured by Nekrasov-Piazzalunga \cite{NP_colors}, by explicitly computing the corresponding sign for every localised contribution. Notice that the immersion 
 \begin{align*}
     \CM_{\overline{r}, n}\hookrightarrow \Quot_{\BC^4}(\oO_{\BC^4}^r,n)
 \end{align*}
induces a closed immersion of their $\TT$-fixed locus
\begin{align*}
     \CM_{\overline{r}, n}^\TT\hookrightarrow \Quot_{\BC^4}(\oO_{\BC^4}^r,n)^\TT.
 \end{align*}
The right-hand-side is reduced, zero-dimensional and in bijection with tuples of \emph{solid partitions} of size $n$, which can be seen as \emph{box arrangements} in $\BZ^4_{\geq 0}$ (cf.~\cite[Sec.~2.1]{Mon_canonical_vertex} for the rank 1 case), generalizing plane partitions to one dimension higher. Under this identification, a tuple of plane partitions $\overline{\pi}=(\overline{\pi}_i)_{i=1, \dots, 4}$ can be seen as a tuple of solid partitions, such that each plane partition $\pi_{il}$ -- for $i=1, \dots, 4$ and $l=1, \dots, r_i$ -- is seen as a solid partition with the $i$-coordinate equal to zero.

A key step of the computation of Kool-Rennemo \cite{KR_magnificient} yields the following combinatorial result.
\begin{prop}[{Kool-Rennemo \cite{KR_magnificient}}]\label{prop_ KR explciit sign}
    Let $\overline{\pi}=(\overline{\pi}_1, \dots \overline{\pi}_4)$, where $\overline{\pi}_i=(\pi_{i1}, \dots, \pi_{ir_i})$ are tuples of plane partitions. Then 
    \[
   (-1)^{\dim\left(\coker(T_{\CM_{\overline{r},n}^{\nc}}|_{p}\xrightarrow{(p_{\tilde{\Lambda}}\circ d\tilde{s})|_{\overline{\pi}}} \tilde{\Lambda}|_{\overline{\pi}} )^{\fix} \right) }[-T_{\CM_{\overline{r},n}^{\nc}}|_{\overline{\pi}}+\tilde{\Lambda}|_{\overline{\pi}}]=(-1)^{\rho_{\overline{\pi}}}[-  \tilde{\mathsf{v}}_{\overline{\pi}}],
    \]
    where the sign is defined as
    \begin{align*}
\rho_{\overline{\pi}}=\sum_{i=1}^{4}\sum_{l=1}^{r_i}\left|\{(a,a,a,d)\in \pi_{il}: a<d\}\right| \mod 2.
    \end{align*}
\end{prop}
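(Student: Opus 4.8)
The plan is to collapse the stated per-point identity to a single sign equation and then to split that sign into an algebraic contribution (from changing the square root) and the geometric cokernel contribution $\psi_{\overline{\pi}}$. First I would record that both
\[
V_1\defeq T_{\CM_{\overline{r},n}^{\nc}}|_{\overline{\pi}}-\tilde{\Lambda}|_{\overline{\pi}}\qquad\text{and}\qquad V_2\defeq\tilde{\mathsf{v}}_{\overline{\pi}}
\]
are square roots of the \emph{same} self-dual class $\overline{K}Q_{\overline{\pi}}+K\overline{Q_{\overline{\pi}}}-P_{1234}Q_{\overline{\pi}}\overline{Q_{\overline{\pi}}}$: for $V_2$ this follows from $\overline{P_{123}}+P_{123}=P_{1234}$, and for $V_1$ it is the defining property of the orientation induced by the maximal isotropic $\tilde{\Lambda}$, since $V_1+\overline{V_1}=T_{\CM^{\nc}_{\overline{r},n}}|_{\overline{\pi}}+T^\ast_{\CM^{\nc}_{\overline{r},n}}|_{\overline{\pi}}-\tilde{\Lambda}|_{\overline{\pi}}-\overline{\tilde{\Lambda}|_{\overline{\pi}}}$, which a short computation using $t_1t_2t_3t_4=1$ turns into exactly $-P_{1234}Q_{\overline{\pi}}\overline{Q_{\overline{\pi}}}+\overline{K}Q_{\overline{\pi}}+K\overline{Q_{\overline{\pi}}}$. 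Hence $D\defeq V_1-V_2$ satisfies $D=-\overline{D}$, and it is $\TT$-movable (both square roots are movable, by Proposition \ref{prop: t movable} and the remark following Lemma \ref{lemma: appendix}). Because $[\,\cdot\,]$ is a homomorphism with $[\overline{t^\mu}]=-[t^\mu]$, an anti-self-dual movable $D$ has $[D]=\pm1$ and one gets $[-V_1]=[-V_2]\cdot[D]$. Cancelling the nonzero factor $[-V_2]$ reduces the proposition to
\[
(-1)^{\psi_{\overline{\pi}}}\,[D]=(-1)^{\rho_{\overline{\pi}}}.
\]

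Second, I would evaluate $[D]$ combinatorially. The explicit form
\[
D=\Bigl[(t_4^{-1}-t_4)+\sum_{j=1}^3\bigl(t_4t_j-\overline{t_4t_j}\bigr)\Bigr]Q_{\overline{\pi}}\overline{Q_{\overline{\pi}}}
\]
makes the anti-self-duality manifest. Choosing any polarization $D=E-\overline{E}$ into positive and negative weights (possible since $D$ is movable) gives $[D]=(-1)^{\rk E}$, so $[D]$ is the parity of the number of weights of the above bracketed polynomial times $Q_{\overline{\pi}}\overline{Q_{\overline{\pi}}}$ lying in the chosen half-space. Expanding $Q_{\overline{\pi}}\overline{Q_{\overline{\pi}}}=\sum_{i,l}\sum_{\square,\square'\in\pi_{il}}t^{\,\square-\square'}$ over ordered pairs of boxes of each plane partition turns $\rk E\bmod 2$ into an explicit lattice count.

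Third — and this is the main obstacle — I would compute
\[
\psi_{\overline{\pi}}=\dim\coker\bigl(T_{\CM_{\overline{r},n}^{\nc}}|_{\overline{\pi}}\xrightarrow{(p_{\tilde{\Lambda}}\circ d\tilde{s})|_{\overline{\pi}}}\tilde{\Lambda}|_{\overline{\pi}}\bigr)^{\fix}.
\]
Here $p_{\tilde{\Lambda}}\circ\tilde{s}$ records the three commutators $[B_4,B_j]$, $j=1,2,3$, valued in $(\langle e_4\rangle\wedge\BC^3)\otimes\End(Q_{\overline{\pi}})$, so its linearization at the monomial fixed point is $\delta\mapsto\bigl([\delta B_4,B_j]+[B_4,\delta B_j]\bigr)_{j}$, which must be analyzed weight by weight on the basis of $\End(Q_{\overline{\pi}})$ indexed by ordered pairs of boxes. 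The plan is to identify, via the local structure of the tautological Koszul complex of $\Quot_{\BC^4}(\oO^r,n)$ at a monomial ideal (in the spirit of the three-dimensional MNOP vertex computation and its four-dimensional refinement in \cite{CKM_K_theoretic}), precisely which trivial-weight lines of $\tilde{\Lambda}|_{\overline{\pi}}$ fail to be hit by the differential. I expect these to be exactly the lines attached to the diagonal boxes $(a,a,a,d)$ with $a<d$, which is what produces $\rho_{\overline{\pi}}$; this surviving-cokernel analysis is where essentially all the combinatorial content lives, and it is the step whose bookkeeping I expect to be genuinely delicate.

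Finally, I would combine the two computations, checking that the parity $\rk E$ extracted from $[D]$ together with $\psi_{\overline{\pi}}$ reorganizes into $\sum_{i,l}\bigl|\{(a,a,a,d)\in\pi_{il}:a<d\}\bigr|\bmod 2$. The consistent thread throughout is the Calabi--Yau relation $t_1t_2t_3t_4=1$: it is simultaneously what renders $D$ anti-self-dual (so that $[D]=\pm1$), what identifies the diagonal blocks of the vertex with the three-dimensional theory, and what forces the off-diagonal weights to pair up and cancel in the fixed part of the cokernel, as in the vanishing computed in Lemma \ref{lemma: appendix}.
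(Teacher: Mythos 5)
There is an important mismatch of expectations here: the paper contains \emph{no proof} of this proposition. It is imported wholesale from Kool--Rennemo \cite{KR_magnificient}, whose proof the authors explicitly defer to that forthcoming paper; the present appendix only \emph{uses} it (in Corollary \ref{cor: final sign}). That said, your reduction is correct as far as it goes. The classes $V_1=T_{\CM_{\overline{r},n}^{\nc}}|_{\overline{\pi}}-\tilde{\Lambda}|_{\overline{\pi}}$ and $V_2=\tilde{\mathsf{v}}_{\overline{\pi}}$ are indeed square roots of the same self-dual class, your formula $D=V_1-V_2=\bigl[(t_4^{-1}-t_4)+\sum_{j=1}^{3}(t_4t_j-\overline{t_4t_j})\bigr]Q_{\overline{\pi}}\overline{Q_{\overline{\pi}}}$ checks out under $t_1t_2t_3t_4=1$, and the mechanism $[D]=(-1)^{\rk E}$ for an anti-self-dual $\TT$-movable $D=E-\overline{E}$ is exactly the one the paper itself deploys for the class $U-\overline{U}$ in the proof of Corollary \ref{cor: final sign}. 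Two small points: $Q_{\overline{\pi}}\overline{Q_{\overline{\pi}}}$ also contains cross terms $w_{il}w_{jk}^{-1}Z_{\pi_{il}}\overline{Z_{\pi_{jk}}}$ for $(i,l)\neq(j,k)$, which your expansion silently drops --- they contribute trivially because the bracketed factor has rank $0$, but this needs to be said; and the cancellation of $[-V_2]$ requires the movability of $\tilde{\mathsf{v}}_{\overline{\pi}}$, which the paper gets from \cite[Lemma 2.4]{Mon_canonical_vertex}.

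The genuine gap is that your argument stops exactly where the proposition begins. After the reduction, the statement is the identity $(-1)^{\psi_{\overline{\pi}}}(-1)^{\rk E}=(-1)^{\rho_{\overline{\pi}}}$ with $\rho_{\overline{\pi}}=\sum_{i,l}\lvert\{(a,a,a,d)\in\pi_{il}:a<d\}\rvert$, and neither parity on the left is computed: the identification of $\coker\bigl((p_{\tilde{\Lambda}}\circ d\tilde{s})|_{\overline{\pi}}\bigr)^{\fix}$ with lines indexed by diagonal boxes is only ``expected,'' and the final matching is only announced as something to be ``checked.'' Note that $\rho_{\overline{\pi}}$ is generically nonzero (for $i\neq 4$ it counts the boxes of $\pi_{il}$ lying on the $x_4$-axis), so no soft duality or rigidity argument can produce it; the box-by-box analysis of which trivial-weight lines of $\tilde{\Lambda}|_{\overline{\pi}}$ are missed by $\delta\mapsto([\delta B_4,B_j]+[B_4,\delta B_j])_j$ \emph{is} the theorem, and it is precisely the content that Kool--Rennemo supply and that this paper deliberately does not reproduce. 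As written, your proposal is a correct and well-motivated framing plus an unproven combinatorial conjecture at its core.
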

The sign $\rho_{\overline{\pi}}$ was firstly conjecture to be of this form by Nekrasov-Piazzalunga \cite{NP_colors} (cf.~\cite{CKM_K_theoretic, Mon_canonical_vertex} for the rank 1 theory). We remark once more that we are seeing here the plane partition $\pi_{il}$ as a solid partition, via the embedding $\BZ^3_{\geq 0} \subset \BZ^4_{\geq 0}$ given by imposing the $i$-th coordinate to be zero.
\begin{corollary}\label{cor: final sign}
    We have
    \begin{align*}
           (-1)^{\psi_{\overline{\pi}}}[-T_{\CM_{\overline{r},n}^{\nc}}|_{\overline{\pi}}+\Lambda|_{\overline{\pi}}]=[-\mathsf{v}_{\overline{\pi}}].
    \end{align*}
    In particular, the sign $(-1)^{\sigma_{\overline{\pi}}}=1 $ is constant.
\end{corollary}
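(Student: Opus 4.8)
The plan is to compare, term by term in the fixed locus, the two \emph{a priori} different square roots of $-T^{\vir}_{\overline{\pi}}$ that appear on the two sides of the asserted equality, and to show their images under $[\cdot]$ differ by exactly the Kool--Rennemo sign. First I would record that $-\mathsf{v}_{\overline{\pi}}$ is a square root of $-T^{\vir}_{\overline{\pi}}$ by construction, and that so is $-T_{\CM^{\nc}_{\overline{r},n}}|_{\overline{\pi}}+\Lambda|_{\overline{\pi}}$: from the short exact sequence $0\to\Lambda\to\CL\to\Lambda^\ast\to 0$ one gets $\CL|_{\overline{\pi}}=\Lambda|_{\overline{\pi}}+\Lambda^\ast|_{\overline{\pi}}$ in $K$-theory, whence
\[
-T^{\vir}_{\overline{\pi}}=\bigl(-T_{\CM^{\nc}_{\overline{r},n}}|_{\overline{\pi}}+\Lambda|_{\overline{\pi}}\bigr)+\overline{\bigl(-T_{\CM^{\nc}_{\overline{r},n}}|_{\overline{\pi}}+\Lambda|_{\overline{\pi}}\bigr)}.
\]
Since any two square roots of a fixed class differ by a $\overline{(\cdot)}$-anti-invariant class, and since $[t^\mu-t^{-\mu}]=[t^\mu][t^{-\mu}]^{-1}=-1$, their $[\cdot]$-images differ only by a sign; the whole content of the Corollary is the determination of this sign, after which the term-by-term comparison of $\CZ_{\overline{r}}(q)=\sum_{\overline{\pi}}(-1)^{\sigma_{\overline{\pi}}}[-\mathsf{v}_{\overline{\pi}}]q^{|\overline{\pi}|}$ with the localised expression $\sum_{\overline{\pi}}(-1)^{\psi_{\overline{\pi}}}[-T_{\CM^{\nc}_{\overline{r},n}}|_{\overline{\pi}}+\Lambda|_{\overline{\pi}}]q^{|\overline{\pi}|}$ forces $(-1)^{\sigma_{\overline{\pi}}}=1$.

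To pin the sign down I would route the computation through the Kool--Rennemo rule for $\BC^4$. By Lemma \ref{lemma: appendix} I may compute $\psi_{\overline{\pi}}$ using $\tilde{\Lambda}$ instead of $\Lambda$, so that $\psi_{\overline{\pi}}$ is precisely the exponent of Proposition \ref{prop_ KR explciit sign}. Writing $D:=\bigoplus_{i=1}^4\Hom(K_i t_i,Q_{\overline{\pi}})^\ast=\Lambda|_{\overline{\pi}}-\tilde{\Lambda}|_{\overline{\pi}}$, which is $\TT$-movable by the vanishing established in the proof of Lemma \ref{lemma: appendix}, multiplicativity of $[\cdot]$ gives
\[
[-T_{\CM^{\nc}_{\overline{r},n}}|_{\overline{\pi}}+\Lambda|_{\overline{\pi}}]=[-T_{\CM^{\nc}_{\overline{r},n}}|_{\overline{\pi}}+\tilde{\Lambda}|_{\overline{\pi}}]\cdot[D].
\]
Inserting Proposition \ref{prop_ KR explciit sign} then yields
\[
(-1)^{\psi_{\overline{\pi}}}[-T_{\CM^{\nc}_{\overline{r},n}}|_{\overline{\pi}}+\Lambda|_{\overline{\pi}}]=(-1)^{\rho_{\overline{\pi}}}[-\tilde{\mathsf{v}}_{\overline{\pi}}+D],
\]
so that everything reduces to identifying $[-\tilde{\mathsf{v}}_{\overline{\pi}}+D]$ with $(-1)^{\rho_{\overline{\pi}}}[-\mathsf{v}_{\overline{\pi}}]$.

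The last step is the crux. Both $-\tilde{\mathsf{v}}_{\overline{\pi}}+D$ and $-\mathsf{v}_{\overline{\pi}}$ are square roots of $-T^{\vir}_{\overline{\pi}}$: indeed $\tilde{\mathsf{v}}_{\overline{\pi}}+\overline{\tilde{\mathsf{v}}_{\overline{\pi}}}$ equals the virtual tangent space of $\Quot_{\BC^4}(\oO_{\BC^4}^r,n)$ at $\overline{\pi}$ (using the Calabi--Yau relation $P_{123}+\overline{P_{123}}=P_{1234}$), which in turn equals $T^{\vir}_{\overline{\pi}}+D+\overline{D}$. Hence their difference
\[
E:=\mathsf{v}_{\overline{\pi}}-\tilde{\mathsf{v}}_{\overline{\pi}}+D=\sum_{j=1}^{3}\bigl(\overline{P_{123}}-\overline{P_{j_1j_2j_3}}\bigr)Q_{\overline{\pi}_j}\overline{Q_{\overline{\pi}_j}}+\sum_{i<j}\bigl(\overline{P_{123}}-\overline{P_{j_1j_2j_3}}\bigr)\bigl(Q_{\overline{\pi}_i}\overline{Q_{\overline{\pi}_j}}+Q_{\overline{\pi}_j}\overline{Q_{\overline{\pi}_i}}\bigr)
\]
is $\overline{(\cdot)}$-anti-invariant (note that every $j=4$ coefficient vanishes since $P_{j_1j_2j_3}=P_{123}$ there), and $[-\tilde{\mathsf{v}}_{\overline{\pi}}+D]=[E]\,[-\mathsf{v}_{\overline{\pi}}]$ with $[E]=(-1)^{n_+}$, where $n_+$ counts, with multiplicity, the weights of $E$ on the positive side of a generic cocharacter. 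The main obstacle is the explicit evaluation $[E]=(-1)^{\rho_{\overline{\pi}}}$. I would prove it by expanding each $Z_{\pi_{il}}$ into monomials and pairing off weights of $E$, crucially using that each $\pi_{il}$ sits in the coordinate hyperplane $\{c_i=0\}$: the factor $\overline{P_{123}}-\overline{P_{j_1j_2j_3}}$ cancels the bulk of the contributions, and the only surviving odd weights are pinned to the diagonal boxes $(a,a,a,d)$ with $a<d$ that define $\rho_{\overline{\pi}}$. Granting $[E]=(-1)^{\rho_{\overline{\pi}}}$, the two copies of $(-1)^{\rho_{\overline{\pi}}}$ cancel, giving $(-1)^{\psi_{\overline{\pi}}}[-T_{\CM^{\nc}_{\overline{r},n}}|_{\overline{\pi}}+\Lambda|_{\overline{\pi}}]=[-\mathsf{v}_{\overline{\pi}}]$, and with it $(-1)^{\sigma_{\overline{\pi}}}=1$.
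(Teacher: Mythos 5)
Your reduction is correct and is essentially the paper's own first step: combining Lemma \ref{lemma: appendix} with Proposition \ref{prop_ KR explciit sign} and multiplying through by $[D]$ (legitimate because $D^{\fix}=0$ by the vanishing in the proof of Lemma \ref{lemma: appendix}) gives $(-1)^{\psi_{\overline{\pi}}}[-T_{\CM_{\overline{r},n}^{\nc}}|_{\overline{\pi}}+\Lambda|_{\overline{\pi}}]=(-1)^{\rho_{\overline{\pi}}}[-\tilde{\mathsf{v}}_{\overline{\pi}}+D]$, and your computation of $E=\mathsf{v}_{\overline{\pi}}-\tilde{\mathsf{v}}_{\overline{\pi}}+D$ and of its anti-self-duality is right. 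The gap is the step you yourself flag as the crux: the identity $[E]=(-1)^{\rho_{\overline{\pi}}}$ is asserted with only the heuristic that ``the only surviving odd weights are pinned to the diagonal boxes.'' That sentence is not an argument; it is a restatement of the Nekrasov--Piazzalunga sign rule, which is precisely the nontrivial combinatorial content of the corollary.

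Concretely, $E$ splits into cross terms (pairs $(i,l)\neq(j,k)$ of framing indices) and diagonal terms. The cross terms are genuinely easy and your pairing-off would work there: each such contribution has the form $U-\overline{U}$ with $U=w_{il}^{-1}w_{jk}\,(\overline{P_{123}}-\overline{P_{j_1j_2j_3}})Z_{\pi_{jk}}\overline{Z_{\pi_{il}}}$, which has zero fixed part (every weight carries the nontrivial factor $w_{il}^{-1}w_{jk}$) and rank zero, so it contributes $(-1)^{\rk U}=1$ to $[E]$; this is the paper's \eqref{eqn: equiv 2}. But the diagonal terms $(\overline{P_{123}}-\overline{P_{i_1i_2i_3}})Z_{\pi_{il}}\overline{Z_{\pi_{il}}}$ carry no framing weight, and identifying the parity of the rank of a half of this anti-self-dual class with $|\{(a,a,a,d)\in\pi_{il}:a<d\}|$ is a genuine combinatorial theorem --- the massive cancellation forced by the factor $\overline{P_{123}}-\overline{P_{i_1i_2i_3}}$ has to be tracked weight by weight, and it is the same mechanism underlying the sign conjecture of \cite{NP_colors,CKM_K_theoretic}. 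The paper does not prove it by direct expansion either: it invokes the canonicity theorem \cite[Thm.~2.8]{Mon_canonical_vertex}, which trades the sign $\rho_{\pi_{il}}$ for an auxiliary sign $\tilde{\rho}_{\pi_{il}}$ counting boxes with $a_{i_1}=a_{i_2}=a_{i_3}<a_i$, and then observes that $\tilde{\rho}_{\pi_{il}}=0$ because the plane partition lies in the hyperplane $\{a_i=0\}$. To complete your argument you must either cite that result or actually carry out its plane-partition case; as written, the decisive identity is assumed rather than proved.
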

\begin{proof}
Proposition \ref{prop_ KR explciit sign} yields
\begin{align*}
      (-1)^{\psi_{\overline{\pi}}}[-T_{\CM_{\overline{r},n}^{\nc}}|_{\overline{\pi}}+\Lambda|_{\overline{\pi}}]=(-1)^{\rho_{\overline{\pi}}}[-  \tilde{\mathsf{v}}_{\overline{\pi}}+ \sum_{i=1}^4K_i t_i \overline{Q_{\overline{\pi}}}].
\end{align*}
To conclude the argument, we need to show that 
\begin{align}\label{eqn: proof sign}
    (-1)^{\rho_{\overline{\pi}}}[-  \tilde{\mathsf{v}}_{\overline{\pi}}+ \sum_{i=1}^4K_i t_i \overline{Q_{\overline{\pi}}}]=[ -\mathsf{v}_{\overline{\pi}}].
\end{align}
By the decomposition of the vertex term \eqref{eqn: decomposition vertex} and Proposition \ref{prop_ KR explciit sign},   Equation \eqref{eqn: proof sign} holds if, equivalently, the families of equations \eqref{eqn: equiv 1}, \eqref{eqn: equiv 2} hold. The first one asks that for every $(i,l)$
\begin{align}\label{eqn: equiv 1}
     (-1)^{\rho_{{\pi_{il}}}}[Z_{il}-
 \overline{P_{123}}Z_{il}\overline{Z_{il}}]&=[Z_{il}-
 \overline{P_{i_1i_2i_3}}Z_{il}\overline{Z_{il}}],
 \end{align}
 where the sign is defined as
 \begin{align*}
\rho_{\pi_{il}}=\left|\{(a,a,a,d)\in \pi_{il}: a<d\}\right| \mod 2.
    \end{align*}
 The second asks that  for every $(i,l)<(j,k)$
 \begin{multline}\label{eqn: equiv 2}
     [  (w_{il}^{-1}w_{jk}\left(Z_{\pi_{jk}}-\kappa^{-1}_j\overline{ Z_{\pi_{il}}}-\overline{P_{123}}Z_{\pi_{jk}}\overline{ Z_{\pi_{il}}} \right)+  w_{il}w_{jk}^{-1}\left(Z_{\pi_{il}}-\kappa^{-1}_i\overline{ Z_{\pi_{jk}}}-\overline{P_{123}}Z_{\pi_{il}}\overline{ Z_{\pi_{jk}}}\right)]\\
     = [  w_{il}^{-1}w_{jk}\left(Z_{\pi_{jk}}-\kappa^{-1}_j\overline{ Z_{\pi_{il}}}-\overline{P_{j_1j_2j_3}}Z_{\pi_{jk}}\overline{ Z_{\pi_{il}}} \right)+  w_{il}w_{jk}^{-1}\left(Z_{\pi_{il}}-\kappa^{-1}_i\overline{ Z_{\pi_{jk}}}-\overline{P_{j_1j_2j_3}}Z_{\pi_{il}}\overline{ Z_{\pi_{jk}}}\right)].
 \end{multline}
 Equation \eqref{eqn: equiv 2} is equivalent to 
\begin{multline}\label{eqn: appendix rewritten}
     [  w_{il}^{-1}w_{jk}\overline{P_{123}}Z_{\pi_{jk}}\overline{ Z_{\pi_{il}}} +  w_{il}w_{jk}^{-1}\overline{P_{123}}Z_{\pi_{il}}\overline{ Z_{\pi_{jk}}}]\\
     = [  w_{il}^{-1}w_{jk}\overline{P_{j_1j_2j_3}}Z_{\pi_{jk}}\overline{ Z_{\pi_{il}}} +  w_{il}w_{jk}^{-1}\overline{P_{j_1j_2j_3}}Z_{\pi_{il}}\overline{ Z_{\pi_{jk}}}].
 \end{multline}
To prove \eqref{eqn: appendix rewritten}, we just need to show that
\begin{multline*}
w_{il}^{-1}w_{jk}\overline{P_{123}}Z_{\pi_{jk}}\overline{ Z_{\pi_{il}}} +  w_{il}w_{jk}^{-1}\overline{P_{123}}Z_{\pi_{il}}\overline{ Z_{\pi_{jk}}}\\
    -\left( w_{il}^{-1}w_{jk}\overline{P_{j_1j_2j_3}}Z_{\pi_{jk}}\overline{ Z_{\pi_{il}}}+  w_{il}w_{jk}^{-1}\overline{P_{j_1j_2j_3}}Z_{\pi_{il}}\overline{ Z_{\pi_{jk}}}\right)=U-\overline{U},
\end{multline*}
for a certain Laurent polynomial $U$ such that $(-1)^{\rk U^{\mov}}=1$. In fact, in this case we would have that 
\begin{multline*}
     [  w_{il}^{-1}w_{jk}\overline{P_{123}}Z_{\pi_{jk}}\overline{ Z_{\pi_{il}}} +  w_{il}w_{jk}^{-1}\overline{P_{123}}Z_{\pi_{il}}\overline{ Z_{\pi_{jk}}}\\
      -  \left(w_{il}^{-1}w_{jk}\left(-\overline{P_{j_1j_2j_3}}Z_{\pi_{jk}}\overline{ Z_{\pi_{il}}} \right)+  w_{il}w_{jk}^{-1}\left(-\overline{P_{j_1j_2j_3}}Z_{\pi_{il}}\overline{ Z_{\pi_{jk}}}\right)\right)]=\frac{[U^{\mov}]}{[\overline{U^{\mov}}]}\\
      =(-1)^{\rk U^{\mov}}=1,
 \end{multline*}
which would show Equation \eqref{eqn: equiv 2}. Using the Calabi-Yau condition $t_1t_2t_3t_4=1$ it is easy to check that the following identity holds
\[
 \overline{P_{123}}-\overline{P_{j_1j_2j_3}}=-\overline{\left( \overline{P_{123}}-\overline{P_{j_1j_2j_3}}\right)}.
\]
Using this relation, we may choose $U$ of the form
\begin{align*}
    U=w_{il}^{-1}w_{jk}Z_{\pi_{jk}}\overline{ Z_{\pi_{il}}}( \overline{P_{123}}-\overline{P_{j_1j_2j_3}}),
\end{align*}
which satisfies
\begin{align*}
    \rk U^{\mov}&= \rk U^{\fix} \mod 2\\
    &=0,
\end{align*}
which implies that $ (-1)^{\rk U^{\mov}}=1$ as requested.

In order to prove \eqref{eqn: equiv 1} -- for every $(i,l)$ -- we exploit \cite[Thm. 2.8]{Mon_canonical_vertex}\footnote{See \cite[Sec.~3.1]{Mon_canonical_vertex} for the result  \cite[Thm. 2.8]{Mon_canonical_vertex} spelled out for the  $K$-theoretic refined invariants and the operator $[\cdot]$.}, which proves that
\begin{align*}
     (-1)^{\rho_{{\pi_{il}}}}[Z_{il}-
 \overline{P_{123}}Z_{il}\overline{Z_{il}}]&=(-1)^{\tilde{\rho}_{{\pi_{il}}}}[Z_{il}-
 \overline{P_{i_1i_2i_3}}Z_{il}\overline{Z_{il}}],
 \end{align*}
 where the sign is defined as
  \begin{align*}
\tilde{\rho}_{\pi_{il}}=\left|\{(a_1,a_2,a_3,a_4)\in \pi_{il}: a_{i_1}=a_{i_2}=a_{i_3}<a_i\}\right| \mod 2.
    \end{align*}
We conclude by noticing that the plane partition  $\pi_{il}$ --- when seen as a solid partition --- satisfies $a_{i}=0$ for all $(a_1, a_2, a_3, a_4)\in \pi_{il}$, which implies that $\tilde{\rho}_{\pi_{il}}=0$.
\end{proof}
\begin{remark}
 In Corollary \ref{cor: final sign} we used the computation of the sign of Kool-Rennemo (Proposition \ref{prop_ KR explciit sign}), combined with the canonicity of the vertex term \cite[Thm. 2.8]{Mon_canonical_vertex}, to prove that the sign is constantly 1. However, we could in principle directly compute the relevant sign from Theorem \ref{thm: KR sign}, and then show that the resulting sign for our choice of square root $\mathsf{v}_{\overline{\pi}}$ is constantly 1 by adapting the  proof of \cite[Thm. 2.8]{Mon_canonical_vertex} to our context.
\end{remark}

\bibliographystyle{amsplain-nodash}
\bibliography{The_Bible}
\end{document}